\newcommand{\RR}{{\mathbb R}}
\newcommand{\EE}{{\mathbb E}}
\newcommand{\LL}{{\mathbb L}}
\newcommand{\PP}{{\mathbb P}}
\numberwithin{equation}{section}
\newtheorem{theorem}{Theorem}[section]
\newtheorem{defn}[theorem]{Definition}
\newtheorem{lemma}[theorem]{Lemma}
\newtheorem{remark}[theorem]{Remark}
\newtheorem{prop}[theorem]{Proposition}
\begin{document}
\title[Strong Convergence for 2D NS  time  numerical schemes]
{Strong $L^2$ convergence of time numerical schemes  
 \\ for the stochastic 2D Navier-Stokes equations}

\author[H. Bessaih]{Hakima Bessaih}
\address{University of Wyoming, Department of Mathematics and Statistics, Dept. 3036, 1000
East University Avenue, Laramie WY 82071, United States}
\email{ bessaih@uwyo.edu}

\author[A. Millet]{ Annie Millet}
\address{SAMM, EA 4543,
Universit\'e Paris 1 Panth\'eon Sorbonne, 90 Rue de
Tolbiac, 75634 Paris Cedex France {\it and} Laboratoire de
Probabilit\'es, Statistique et Mod\'elisation, UMR 8001, 
  Universit\'es Paris~6-Paris~7} 
\email{amillet@univ-paris1.fr}

\thanks{  Hakima Bessaih was partially supported by NSF grant DMS 1418838. }  

\subjclass[2000]{ Primary 60H15, 60H35; Secondary 76D06, 76M35.} 

\keywords{Stochastic Navier-Sokes equations, numerical schemes, 
strong convergence, implicit time discretization, splitting scheme,  exponential moments}

\begin{abstract}
We prove that some time  discretization schemes for the 2D Navier-Stokes equations on the torus 
subject to a random perturbation converge in $L^2(\Omega)$.
This refines previous results which only established the convergence in probability of these numerical approximations. Using 
exponential  moment estimates of
the solution of the stochastic Navier-Stokes equations and convergence of a   localized scheme,  we can prove strong convergence of 
 fully implicit and semi-implicit  time Euler discretizations, and of a splitting scheme. 
The  speed of the $L^2(\Omega)$-convergence depends on the diffusion coefficient and on the viscosity parameter. 
\end{abstract}

\maketitle

\section{Introduction}\label{s1} \smallskip
An incompressible fluid flow dynamic is described by the so-called incompressible Navier-Stokes equations. 
The fluid flow is defined by a velocity field and a pressure term that evolve in a very particular way. 
These equations are parametrized by the viscosity coefficient $\nu>0$. 
Their quantitative and qualitative properties depend on the dimensional setting. 
For example, while the well posedness of global weak solutions of the 2D Navier-Stokes   is  well known and established, 
 the uniqueness of global weak solutions for the 3D case is completely open.
In this paper, we will focus on the 2D incompressible Navier-Stokes equations in a bounded domain $D= [0,L]^2$,
subject to an external forcing defined as:
\begin{align} \label{2D-NS}
 \partial_t u - \nu \Delta u + (u\cdot \nabla) u + \nabla \pi & = G(u) dW\quad \mbox{\rm in } \quad (0,T)\times D,\\
 \mbox{\rm div }u&=0 \quad \mbox{\rm in } \quad (0,T)\times D,
 \end{align}
 where  $T>0$. The process 
$u: \Omega\times (0,T)\times D  \to \RR^2$  is  the velocity field 
 with initial condition $u_0$ in $D$ and periodic boundary conditions $u(t,x+L v_i)=u(t,x)$ on  $(0,T)\times \partial D$, 
 where $v_i$, $i=1,2$
 denotes the canonical basis of $\RR^2$, and $\pi : \Omega\times (0,T)\times D  \to \RR$  is  the  pressure. 
 
 The external force is described by a stochastic perturbation and will be defined in detail later. 
 Here $G$ is a diffusion coefficient with global Lipschitz conditions.
Let $(\Omega, {\mathcal F}, ({\mathcal F}_t),  \PP)$ denote a filtered probability space and $W$ be a Wiener process to be precisely defined later. 

There is an extensive  literature concerning the deterministic models and we refer to the books of Temam; see \cite{Tem, Temam-84} for known 
results. The stochastic case has also been widely investigated, see \cite{FG} for some very general results and the references therein. 
For the 2D case, unique global weak and strong solutions (in the PDE sense) are constructed for both additive and multiplicative noise, 
and without being exhaustive, we refer to \cite{Breckner,  ChuMil}.  

Numerical schemes and algorithms have been introduced to best approximate and construct solutions for PDEs. 
A similar approach has started to emerge for stochastic models and in particular SPDEs and has known a strong
 interest by the probability community.  
Many algorithms  based on either finite difference, finite element  or spectral Galerkin methods 
(for the space discretization), and on either Euler schemes, Crank-Nicolson or Runge-Kutta schemes (for the temporal discretization) 
have been introduced for both the linear and nonlinear cases. Their rates of convergence have been widely investigated.
The literature on numerical analysis for SPDEs is now very extensive. 
When the models are either linear, have global Lipschitz properties or more generally some monotonicity property, 
then there is extensive literature, see \cite{Ben1, Ben2}. Moreover, in this case the convergence is proven to be in mean square. 
When nonlinearities are involved that are not of Lipschitz or monotone type,  then a rate of  convergence 
 in mean square is difficult to  obtain.  
  Indeed, because of the stochastic perturbation, there is no way of using the Gronwall lemma after taking the expectation of the
   error bound   
  because it involves a nonlinear term that is usually in a quadratic form. 
  One way of getting around it is to localize the nonlinear term in order to get a linear inequality and then use the Gronwall lemma. 
  This gives rise to  a rate of   convergence in probability, that was first introduced by J.~Printems \cite{Pri}.

The stochastic Navier-Stokes equations with a multiplicative noise \eqref{2D-NS}
 have been investigated  by Z.~Brzezniak, E.~Carelli and A.~Prohl in \cite{BrCaPr}. 
 There,   space discretization based on finite elements  and Euler schemes for the time discretization  have been implemented. 
 The numerical scheme was proven to converge in probability  with a particular rate. A similar  problem 
  has been investigated by E.~Carelli and A.~Prohl
 in \cite{CarPro}, 
 with more focus on various Euler schemes including semi-implicit and fully implicit ones. This  gave rise to a slightly different rate of convergence, 
 although still in probability.  Again, the main tool used is the localization of the nonlinear term over a probability space of 
  "large"  probability.  
In \cite{BeBrMi}, the authors used a splitting method,  based on the  Lie-Trotter  formula,
 proving again some rate of convergence   in probability   of the 
numerical scheme.  In \cite{Dor}, P. D\"orsek studied 
a semigroup splitting and used cubature approximations,  obtaining interesting results for an additive
 noise.  When the noise is  additive,  a pathwise argument  was used by H.~Breckner in \cite{Breckner}; 
 convergence almost sure 
 and in mean  was obtained, although no rate of convergence was explicitly given. 
 To  the best of our knowledge there is no result about  a strong speed of convergence
 for the stochastic Navier-Stokes equations in the current literature.

 Numerical schemes for stochastic nonlinear models with local Lipschitz nonlinearities  
 related with the Navier-Stokes equations have been studied by several authors.
 For nonlinear parabolic SPDEs,  in \cite{ BJ} D.~Bl\"omker and A.~Jentzen proved a speed of convergence in probability
 of Galerkin approximations of the stochastic Burgers equation, which is a simpler nonlinear PDE  which has some   similarity 
 with  the   Navier-Stokes equation.   In \cite{BHRR},  an abstract stochastic nonlinear evolution equation in a separable Hilbert space was 
  investigated, including the GOY and Sabra shell models. 
    These adimensional 
   models are phenomenological approximations of the Navier-Stokes equations. The authors  proved the convergence 
  in probability in a fractional Sobolev space $H^s$, $0\leq s <\frac{1}{4}$,  of  
   a space-time numerical scheme defined in terms of a Galerkin approximation in space, and a semi-implicit Euler-Maruyama scheme in time. 
  For the Burgers  as well as more general nonlinear SPDEs subject to space-time white noise driven
   perturbation, A.~Jentzen, D.~Salinova and T.~Welti  proved in \cite{JeSaWe} 
 the strong convergence of the scheme but did not give a rate of convergence.  
 \medskip
 
 In this paper, we focus on the stochastic 2D Navier-Stokes equations and  would like to go one step further,  
that is,   obtain a strong  speed of  convergence in mean square instead of the convergence in probability. 
In fact, the main goal is twofold. On one hand, we will improve the convergence  from convergence in probability to  
$L^2(\Omega)$- convergence, the so-called strong convergence in mean square. On the other hand,
 we will also improve the rate of convergence from logarithmic to  almost polynomial.
 
  To explain the method, the paper will deal with two different algorithms: the splitting scheme used in \cite{BeBrMi} 
  and the implicit Euler schemes used in \cite{CarPro}. 
   In the case of a diffusion coefficient $G$ with linear growth conditions, which may depend on the solution and its gradient  for the
   Euler schemes,
  we prove that the speed of convergence of both schemes is any negative power of the logarithm of the time mesh $\frac{T}{N}$ when the
  initial condition  belongs  to $ {\mathbb W}^{1,2}$  and is divergence free. 
  In the case of an additive noise - or under a slight generalization of such
  a noise - we prove that the strong $L^2(\Omega)$ speed of convergence of the fully or semi implicit Euler schemes introduced by Carelli and Prohl 
  in \cite{CarPro} is polynomial in the time mesh. This speed depends on the viscosity coefficient $\nu$ and on the length of the time interval $T$.
  When $T$ is small,  or when  $\nu$ is large, this speed is close to the best one which can be achieved in time,
   that is almost $\frac{1}{4}$.
   This is consistent with the time regularity of the strong solution to the stochastic Navier-Stokes equations,  due
  to the scaling between the time and space variables  in the heat kernel, and to the stochastic integral. 
  
  Let us try to explain the steps of our method here before going into  more details later on in the paper. 
   As we explained earlier, the main difficulty that prevents getting the strong convergence in mean square is due
    to the nonlinear term $(u\cdot \nabla) u$. Indeed, in order to bound  the error 
    $e_{k}:=u(t_{k})-u_N(t_{k})$ over the grid points $t_{k},\ k=1,\dots, N$, in some implicit Euler method, one has to upper bound 
$$\EE \|(u(t_{k})\cdot \nabla) u(t_{k})- (u_N(t_{k})\cdot \nabla) u_N(t_{k})\|_{V'}.$$
To close the estimates and use some Gronwall lemma, the tool  used in \cite{CarPro} (as well as in  \cite{BeBrMi})
is to localize on a subspace of $\Omega$. However, in both previous results, the localization set was depending on the
discretization. In this work, we make  slightly different computations, based on the antisymmetry of the bilinear term, and localize on
sets which only depend on the solution to the stochastic Navier Stokes equations \eqref{2D-NS}, such as 
 $\Omega_{N}^{M}$ defined by \eqref{Omegak} for the Euler schemes. 
Hence, one obtains for example 
\begin{equation*}			
\EE\Big( 1_{\Omega_{N}^M } \max_{1\leq k \leq N}   |e_k|_{\LL^2}^2  \Big) \leq C \; \exp\big[C_1(M) T\big] \;  \Big(\frac{T}{N}\Big)^\eta , 
\end{equation*}
where $\eta <\frac{1}{2}$ and  $C_1(M)$ is a constant depending on the bound $M$ of the $\LL^2$-norm of $\nabla u$
 imposed on the localization set $\Omega_{N}^{M}$.  The exponent $\eta$ is
natural and related to the time regularity of the solution $u$ when the initial condition $u_0$ belongs to ${\mathbb W}^{1,2}$. 

In order to prove the strong speed of convergence,  we will use the partition of $\Omega$ into $\Omega_{N}^M$ and its
complement for some threshold $M$ depending on $N$. More precisely, 
we have to balance the upper estimate of the  moments localized on the set $\Omega_N^{M(N)}$
for some well chosen sequence $M(N)$, going to infinity as $N$ does, and a similar upper estimate of the $L^2(\Omega)$ moment of the error
localized on the complement of the set $\Omega_{N}^{M(N)}$.  
 This is performed by upper estimating moments of $u(t_k)$ and of $u_N(t_k)$
uniformly in $N$ and $k$, estimating $\PP\big( (\Omega_N^{M(N)})^c\big)$, and using the H\"older inequality
\[ \EE\Big( 1_{(\Omega_N^{M(N)})^c} \max_{1\leq k \leq N}   |e_k|_{\LL^2}^2  \Big) \leq \Big( \PP\big((\Omega_N^{M(N)})^c \big) \Big)^{\frac{1}{p}}
 \Big[ \EE \Big( \sup_{0\leq s\leq T} |u(s)|_{\LL^2}^{2q} + \max_{0\leq k\leq N} |u_N(t_k)|_{\LL^2}^{2q} \Big) \Big]^{\frac{1}{q}}, \] 
where $p$ and $q$ are conjugate exponents.  A similar bound was already used in \cite{HutJen} in a different numerical
framework. 
The upper estimate of the probability of the "bad" set depends on the assumptions on the diffusion coefficient. Note that since we are localizing
on a set which does not depend on the discretization scheme, only moments of the solution to the stochastic Navier Stokes equation 
\eqref{2D-NS} have to
be dealt with.

 In the case of globally Lipschitz coefficient $G$, we use bounds of  various moments of $u$ in    ${\mathbb W}^{1,2}$. 
For both schemes,   the strong speed of convergence is again in the logarithmic scale; when the initial condition is deterministic,
 it is any negative power of $\ln(N)$.  
 
 In the case of an additive noise, 
 we use  a slight extension of exponential moments of the solution of \eqref{2D-NS} in 
 vorticity formulation proved previously by M.~Hairer and J.~Mattingly  in \cite{HaiMat}, given by 
$\EE\big(\sup_{t\in [0,T]} \exp(\alpha_0 |\nabla u(t)|_{\LL^2}^2) \big) <\infty$ for some $\alpha_0 >0$. 
 This yields a better speed of convergence,  due to that fact that  the polynomial Markov inequality is replaced by an exponential one.

For the implicit Euler scheme the strong  speed of convergence 
is polynomial with exponent $\gamma< \frac{1}{2}$  that depends on the viscosity $\nu$. 
Note that for large $\nu$, $\gamma$ approaches $\frac{1}{2}$. 
 For the splitting scheme, 
the strong speed of convergence we obtain in this paper is better than that of the convergence in probability
proven in \cite{BeBrMi}, although not polynomial; it is of the form $c \exp(-C \sqrt{N})$. 

In this paper, 
we  only deal with  time discretization, unlike in \cite{CarPro} where a space-time discretization is studied.
Furthermore, in order to keep the paper in a reasonable size and present simple arguments to follow by the reader,
  we assume  that $G$ does not depend on time. We add relevant comments and remarks on the assumptions to be
added in case of time dependent coefficients   for the implicit Euler schemes  (see section \ref{sec_time_dependent}).  
 \smallskip

The paper is organized as follows. Section \ref{preliminary} recalls basic properties of the 2D Navier Stokes
equations, functional spaces and  strong solutions.  We formulate the  assumptions on  the noise.
 The splitting scheme  is described in Section \ref{sec_splitting} and various moments estimates previously used in  
 \cite{BeBrMi} are recalled. 
 The strategy for proving the strong speed of convergence is described and explained in details. 
The same strategy is used for the Euler schemes   in    Section \ref{sec_Euler}, which  is devoted to the fully implicit and 
semi implicit Euler schemes 
studied previously in \cite{CarPro}. Their strong speed of convergence is proved with a rate of convergence that  is  polynomial  when the 
exponential moment is used.    Finally, Section \ref{Appendix} provides on one hand an improved  moment estimate
for an auxiliary process used in the splitting scheme, and on the other hand the proof for the exponential moment estimates of the gradient of
the solution to \eqref{2D-NS} .

As usual, except if specified otherwise, $C$ denotes a positive constant that may change  throughout the paper,
 and $C(a)$ denotes  a positive constant depending on the parameter $a$.

\section{Notations and preliminary results}\label{preliminary} 
Let ${\mathbb L}^p:=L^p(D)^2$ (resp. ${\mathbb W}^{k,p}:=W^{k,p}(D)^2$)  denote the usual Lebesgue and Sobolev spaces of 
vector-valued functions
endowed with the norms $|\cdot |_{\LL^2}$ (resp. $\|\cdot \|_{{\mathbb W}^{k,p}}$). 
 In what follows, we will  consider velocity fields that have   mean zero  
 over  $[0,L]^2$.  Let $\LL^2_{per}$ denote the subset of $\LL^2$  periodic functions with mean zero over   $[0,L]^2$, and let
\begin{align*}
  H:= &\{ u\in \LL^2_{per} \; : \; {\rm div }\;  u=0 \quad \mbox {\rm weakly in }\;  D \}, \qquad
  V:=  H \cap {\mathbb W}^{1,2}
  \end{align*}
   be  separable  Hilbert spaces.  The space $H$ inherits its inner product  denoted by $(\cdot,\cdot)$ and its norm from $\LL^2$.
   The norm in $V$, inherited from ${\mathbb W}^{1,2}$, is denoted by $\| \cdot \|_V$.   Moreover,   let  $V'$ be the dual space of $V$ 
   with respect to the Gelfand triple,  
  $\langle\cdot,\cdot\rangle$ denotes the duality between $V'$ and $V$.    Let $A=- \Delta$ with its domain  
  $\mbox{\rm Dom}(A)={\mathbb W}^{2,2}\cap H$. 
  
  Let $b:V^3 \to \RR$ denote the trilinear map defined by 
  \[ b(u_1,u_2,u_3):=\int_D  \big(u_1(x)\cdot \nabla u_2(x)\big)\cdot u_3(x)\, dx, \]
  which by the incompressibility condition satisfies  $b(u_1,u_2, u_3)=-b(u_1,u_3,u_2)$  for $u_i \in V$, $i=1,2,3$. 
  There exists a continuous bilinear map $B:V\times V \mapsto
  V'$ such that
  \[ \langle B(u_1,u_2), u_3\rangle = b(u_1,u_2,u_3), \quad \mbox{\rm for all } \; u_i\in V, \; i=1,2,3.\]
  The map  $B$ satisfies the following antisymmetry relations:
  \begin{equation} \label{B}
  \langle B(u_1,u_2), u_3\rangle = - \langle B(u_1,u_3), u_2\rangle , \quad \langle B(u_1,u_2),  u_2\rangle = 0 \qquad \mbox {\rm for all } \quad u_i\in V.
  \end{equation}
  Furthermore, the Gagliardo-Nirenberg inequality implies that for $X:=H\cap \LL^4(D)$ we have 
  \begin{equation} \label{interpol}
  \|u\|_X^2 \leq \bar{C} \; |u|_{\LL^2} \, |\nabla u|_{\LL^2} \leq \frac{\bar{C}}{2} \|u\|_V^2 
  \end{equation}
  for some positive constant $\bar{C}$. 
  For $u\in V$ set $B(u):=B(u,u)$ and recall some well-known properties of $B$, which easily follow from the H\"older and Young inequalities:
   given any $\beta >0$   we have 
  \begin{align}
 & | \langle B(u_1,u_2), u_3\rangle | \leq \beta \|u_3\|_V^2 + \frac{1}{4 \beta} \|u_1\|_X\, \|u_2\|_X, \label{majB-X}\\
  & |\langle B(u_1) - B(u_2) \, , u_1-u_2\rangle | \leq \beta \|u_1-u_2\|_V^2 + C_\beta |u_1-u_2|_{\LL^2}^2 \|u_1\|_X^4, \label{B-B}
  \end{align}
  for  $u_i\in V$, $i=1,2,3$, where 
  \begin{equation} \label{def_Ceta}
  C_\beta = \frac{\bar{C}^2 3^3}{4^4 \beta^3},
  \end{equation}
  and $\bar{C}$ is defined by \eqref{interpol}. 
  
  Let $K$ be a separable Hilbert space and $(W(t), t\in [0,T])$ be a $K$-cylindrical Wiener process defined on the probability
   space $(\Omega, {\mathcal F},
  ({\mathcal F}_t),  \PP)$. 
  For technical reasons, we assume that the initial condition $u_0$ belongs to $L^p(\Omega ; V)$ for some $p\in [2,\infty]$, 
   and  only consider {\it strong solutions}  in the PDE sense. 
 Given two Hilbert spaces $H_1$ and $H_2$, let   ${\mathcal L}_2(H_1,H_2)$ denote the set of Hilbert-Schmidt operators from $H_1$ to $H_2$. 
  The diffusion coefficient $G$ satisfies the following assumption:\\
 { \bf Condition (G1)} Assume that 
  $G : V \to {\mathcal L}_2(K,H)$ is continuous and there exist positive constants $K_i$, $i=0,1$ 
  and $L_1$  
  such
  that for $u,v\in  V  $: 
  \begin{align}
\| G(u)\|_{{\mathcal L}_2(K,H)}^2 &\leq K_0 + K_1 |u|_{\LL^2}^2, \label{growthG_H}\\  
 \|G(u)-G(v)\|_{{\mathcal L}_2(K,H)}^2 &\leq L_1 |u-v|_{\LL^2}^2 . 
 \label{LipG}
  \end{align}

  Finally, note that the following identity involving the Stokes operator $A$ and the bilinear term holds (see e.g. \cite{Tem} Lemma 3.1):
  \begin{equation}   \label{A-B}
  \langle B(u), Au \rangle =0, \quad u\in \mbox{\rm Dom}(A). 
  \end{equation}
  We also suppose that $G$ satisfies the following assumptions:  \\

 {\bf Condition (G2)} The coefficient  $G:
 {\rm Dom}(A) \to {\mathcal L}_2(K,V)$ and there exist positive constants ${K}_i$, $i=0,1$, 
 and $L_1$ 
 such that for every 
 $u,v\in \mbox{\rm Dom}(A)$: 
 \begin{align}
 \| G(u)\|_{{\mathcal L}_2(K,V)}^2 &\le   {K_0} + K_1 \|u\|_V^2,  \label{growthG_V}\\ 
 \| G(u)-  G(v) |_{{\mathcal L}_2(K,V)}^2 & \le L_1 \|u-v\|_V^2 .\label{LipG_V}  
 \end{align}

 We define a  strong solution of \eqref{2D-NS} as follows (see Definition 2.1 in \cite{CarPro}): 
 \begin{defn}
 We say that equation \eqref{2D-NS} has a strong  solution if:
 \begin{itemize}
 \item  $u $ is an adapted $V$-valued process,
 \item $\PP$ a.s. we have $u\in C([0,T];V) \cap L^2(0,T; \mbox{\rm Dom}(A))$,
 \item  $\PP\;  \mbox{\rm a.s.}$
  \begin{align*}
  \big(u(t), \phi\big) +& \nu \int_0^t \big( \nabla u(s), \nabla \phi\big) ds + \int_0^t \big\langle [u(s) \cdot \nabla]u(s), \phi\big\rangle ds \\
& =
 \big( u_0, \phi) + \int_0^t \big( \phi ,  G(u(s)) dW(s) \big)
 \end{align*}
for every $t\in [0,T]$ and every $\phi \in V$.
  \end{itemize}
 \end{defn}
 
 As usual, by projecting  \eqref{2D-NS} on divergence free fields, the pressure term is implicitly in the space $V$ and can be recovered afterwards. 
 Proposition 2.2 in \cite{BesMil} (see also \cite{BeBrMi}, Theorem 4.1) shows the following:
  \begin{theorem} \label{strong_wp}
  Assume that $u_0$ is a $V$-valued, ${\mathcal F}_0$-measurable  random variable such that $\EE \big( \|u_0\|_V^{2p}\big) <\infty$ 
  for some real number $p\in [2,\infty)$.  Assume that the conditions {\bf (G1)} and {\bf (G2)} are satisfied.  
  Then  there exists a unique    solution $u$ to equation \eqref{2D-NS}. 
  Furthermore,  for some positive constant $C$ we have
  \begin{equation}   \label{bound_u}
  \EE\Big( \sup_{t\in [0,T]} \|u(t)\|_V^{2p} + \int_0^T |Au(s)|_{\LL^2}^2 \big( 1+\|u(s)\|_V^{2(p-1)}\big) ds \Big) \leq C\big[ 1+ \EE (\|u_0\|_V^{2p}) \big].
  \end{equation}
  \end{theorem}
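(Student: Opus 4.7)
The plan is to build $u$ by a Galerkin truncation, close a higher-moment $V$-energy estimate that exploits the orthogonality $\langle B(u),Au\rangle=0$ from \eqref{A-B}, pass to the limit using the bounds so obtained, and derive pathwise uniqueness from \eqref{B-B}. Let $\{e_j\}_{j\ge 1}\subset\mbox{\rm Dom}(A)$ be the orthonormal basis of $H$ consisting of eigenfunctions of the Stokes operator on the torus, set $H_n=\mathrm{span}(e_1,\dots,e_n)$ and let $P_n$ denote the $H$-orthogonal projection onto $H_n$. The finite-dimensional It\^o equation
\begin{equation*}
du_n=-\big[\nu Au_n+P_nB(u_n)\big]dt+P_nG(u_n)\,dW,\qquad u_n(0)=P_nu_0,
\end{equation*}
has locally Lipschitz coefficients on $H_n$ (since $B$ is bilinear and $G$ is Lipschitz by (G1)--(G2)), hence admits a unique maximal strong solution; the a priori bounds below force it to be global.

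Next I would apply It\^o's formula to $\|u_n\|_V^{2p}$. Using \eqref{A-B} to kill $\langle B(u_n),Au_n\rangle$ in the energy identity for $\|u_n\|_V^2$, I obtain
\begin{align*}
d\|u_n\|_V^{2p}&+2p\nu\|u_n\|_V^{2(p-1)}|Au_n|_{\LL^2}^2\,dt\\
&=2p\|u_n\|_V^{2(p-1)}\big(Au_n,P_nG(u_n)\,dW\big)+J_n(t)\,dt,
\end{align*}
where the It\^o correction $J_n(t)$ is controlled, via (G2) and the Cauchy--Schwarz bound $|(Au_n,P_nG(u_n)e_k)|\le\|u_n\|_V\|P_nG(u_n)e_k\|_V$, by $C_p\|u_n\|_V^{2(p-1)}\bigl(K_0+K_1\|u_n\|_V^2\bigr)$. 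Integrating in time, taking the supremum and applying Burkholder--Davis--Gundy to the martingale, whose bracket is dominated by $4p^2\int_0^t\|u_n\|_V^{4p-2}(K_0+K_1\|u_n\|_V^2)\,ds$, I would absorb half of $\EE\sup_{s\le t}\|u_n\|_V^{2p}$ into the left-hand side; Gronwall then yields \eqref{bound_u} uniformly in $n$, with both the $|Au|_{\LL^2}^2$ and the $|Au|_{\LL^2}^2\|u\|_V^{2(p-1)}$ contributions coming out of the same energy identity (the former from the $p=1$ case, the latter from the general $p$).

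With these bounds, weak-$*$ compactness in $L^{2p}(\Omega;L^\infty(0,T;V))$, weak compactness in $L^2(\Omega;L^2(0,T;\mbox{\rm Dom}(A)))$ and a fractional-in-time regularity estimate on $u_n$ deduced from the equation combine, via Aubin--Lions and a tightness/Skorokhod argument, to produce a limit $u$ satisfying the weak formulation of \eqref{2D-NS}; \eqref{bound_u} is preserved by weak lower semicontinuity, and continuity of paths in $V$ follows from the classical maximal-regularity theory applied to the linear part with $\nu Au$. For pathwise uniqueness, two solutions $u,v$ give $w=u-v$ satisfying, via \eqref{B-B} with $\beta=\nu/2$ and \eqref{LipG},
\begin{equation*}
d|w|_{\LL^2}^2+\nu\|w\|_V^2\,dt\le\big(C_{\nu/2}\|u\|_X^4+L_1\big)|w|_{\LL^2}^2\,dt+2\big(w,[G(u)-G(v)]\,dW\big),
\end{equation*}
and since \eqref{bound_u} together with \eqref{interpol} yields $\int_0^T\|u\|_X^4\,ds<\infty$ almost surely, stopping at $\tau_M=\inf\{t:\int_0^t\|u\|_X^4\,ds\ge M\}$ and applying Gronwall pathwise forces $w\equiv 0$.

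\emph{Main obstacle.} The pivotal step is the higher-moment estimate in $V$: without the cancellation $\langle B(u_n),Au_n\rangle=0$ from \eqref{A-B}, the nonlinearity would contribute a term of order $\|u_n\|_V\|u_n\|_X^2|Au_n|_{\LL^2}$ that cannot be dominated by $\nu|Au_n|_{\LL^2}^2$ for arbitrary $p$ and general multiplicative noise, and \eqref{bound_u} would simply collapse. The only other delicate point is the bookkeeping in the BDG step, where the constants depending on $p$, $K_1$ and $\nu$ must be tracked so that the fraction of $\EE\sup_{s\le t}\|u_n\|_V^{2p}$ absorbed into the left leaves a clean Gronwall inequality with constants independent of $n$.
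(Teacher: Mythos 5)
Your proposal is correct and follows essentially the argument the paper delegates to its references (the theorem is quoted from Proposition 2.2 of \cite{BesMil} and Theorem 4.1 of \cite{BeBrMi}): a Galerkin scheme in the Stokes eigenbasis so that the projection commutes with $A$ and the cancellation \eqref{A-B} survives truncation, the It\^o formula for $\|u_n\|_V^{2p}$ closed by {\bf (G2)}, Burkholder--Davis--Gundy and Gronwall, then compactness plus pathwise uniqueness via \eqref{B-B} and a stopping-time Gronwall. The only cosmetic slip is that the martingale term should be written with the $V$-inner product, i.e.\ $\big((I+A)u_n,P_nG(u_n)\,dW\big)$ rather than $\big(Au_n,\cdot\big)$, which does not affect the bracket estimate or the conclusion.
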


\section{Time splitting scheme} 		\label{sec_splitting}
In this section, we prove the strong $L^2(\Omega)$ convergence of the splitting scheme introduced in \cite{BeBrMi}.
\subsection{Description of the splitting scheme} 
 Let $N>1$, $h=\frac{T}{N}$ denote the time mesh, and  $t_i=\frac{iT}{N}$, 
$i=0, \cdots, N$ denote a partition of the time interval $[0,T]$. Let  $F: V\to V'$  be defined by
\begin{equation}			\label{def_F}
F(u) = \nu Au + B(u,u).
\end{equation}


\noindent   Note that the formulation of {\bf (G2)} is slighty different from that used in \cite{BeBrMi}. 
They are equivalent due to the inequality
  $|\nabla u|_{\LL^2} \leq C | \mbox{\rm curl } u|_{\LL^2}$ for $u\in V$, where $\mbox{\rm curl } u = \partial_{x_1} u_2 - \partial_{x_2} u_1$.

Set $t_{-1}=-\frac{T}{N}$. For $t\in [t_{-1},0)$ set $y^N(t)=u^N(t)=u_0$ and ${\mathcal F}_t={\mathcal F}_0$. The  approximation. 
 $(y^N,u^N)$ is defined
by induction as follows. Suppose that the processes $u^N(t)$ and $y^N(t)$ are defined for $t\in [t_{i-1},t_i)$ and that $y^N(t_i^-)$ is
$H$-valued and ${\mathcal F}_{t_i}$-measurable. Then for $t\in [t_i,t_{i+1})$, 
 $u^N(t)$ with initial condition $y^N(t_i^-)$ at time $t_i$, is the unique solution of equation:
\begin{equation}			\label{def_uN}
\frac{d}{dt} u^N(t) + F( u^N(t)) = 0, \quad t\in [t_i,t_{i+1}), \quad  u^N(t_i)=u^N(t_i^+) = y^N(t_i^-). 
\end{equation}
Then $u^N(t_{i+1}^-)$ is well-defined,   $H$-valued and ${\mathcal F}_{t_i}$-measurable.  Then  set
$y^N(t_i) = u^N(t_{i+1}^-)$,  and   for $t\in [t_i,t_{i+1})$ 
 define $y^N(t)$ 
as the unique solution of equation  
\begin{equation}			\label{def_yN}
d y^N(t) = G(y^N(t)) dW(t), \quad t\in [t_i,t_{i+1}), \quad  y^N(t_i)=y^N(t_i^+) = u^N(t_{i+1}^-). 
\end{equation}
Finally, set $u^N(T)=y^N(T)=y^N(T^-)$.  The processes $u^N$ and $y^N$ are well-defined and have finite moments
as proved in \cite{BeBrMi}, Lemma~4.2.
\begin{theorem}			\label{moments_spscheme}
Let 
$u_0$ be a $V$-valued, ${\mathcal F}_0$ random variable such that $\EE(\|u_0\|_V^{2p}) <\infty$ for some real number $p\geq 2$.
Suppose that $G$ satisfies conditions  {\bf (G1)} and {\bf (G2)}.  
 Then there exists a positive constant $C$ such that for every
integer $N\geq 1$
\begin{align}				\label{moments_splitting} 
\sup_{t\in [0,T]} \EE\big( \|u^N(t)\|_V^{2p} + \|y^N(t)\|_V^{2p}\big) + \EE\int_0^T \big(1+\|u^N(t)\|_V^{2(p-1)}\big) |Au^N(t)|_{\LL^2}^2 dt \leq C. 
\end{align}
\end{theorem}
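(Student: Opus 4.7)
The plan is to exploit the splitting structure: on each interval $[t_i,t_{i+1})$ the deterministic substep can be treated in a pathwise energy framework, while the stochastic substep is handled by It\^o's formula; the two are then stitched together through the matching conditions $u^N(t_i)=y^N(t_i^-)$ and $y^N(t_i)=u^N(t_{i+1}^-)$ and a discrete Gronwall recursion in $i$. The key structural fact is the 2D-periodic identity \eqref{A-B}, $\langle B(u),Au\rangle=0$, which kills the nonlinearity when the equation \eqref{def_uN} is tested at the $V$-level.

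For the deterministic substep, I would test \eqref{def_uN} against $\|u^N(t)\|_V^{2(p-1)} A u^N(t)$ and use \eqref{A-B} to obtain the pathwise identity
\begin{equation*}
\frac{1}{2p}\frac{d}{dt}\|u^N(t)\|_V^{2p}+\nu\|u^N(t)\|_V^{2(p-1)}|Au^N(t)|_{\LL^2}^2=0,\qquad t\in[t_i,t_{i+1}),
\end{equation*}
so that $\|u^N(\cdot)\|_V^{2p}$ is non-increasing on each subinterval and
\begin{equation*}
\|u^N(t_{i+1}^-)\|_V^{2p}+2p\nu\int_{t_i}^{t_{i+1}}\|u^N(s)\|_V^{2(p-1)}|Au^N(s)|_{\LL^2}^2\,ds=\|y^N(t_i^-)\|_V^{2p}.
\end{equation*}
For the stochastic substep on $[t_i,t_{i+1})$, starting from $y^N(t_i^+)=u^N(t_{i+1}^-)$, I would apply It\^o's formula to $\|y^N(t)\|_V^{2p}$, using condition \textbf{(G2)} and in particular \eqref{growthG_V} to control both the trace (It\^o correction) and the quadratic variation of the stochastic integral, which after taking expectation yields
\begin{equation*}
\EE\|y^N(t)\|_V^{2p}\leq \EE\|u^N(t_{i+1}^-)\|_V^{2p}+C_p\int_{t_i}^{t}\bigl(1+\EE\|y^N(s)\|_V^{2p}\bigr)ds,
\end{equation*}
and the continuous Gronwall lemma on an interval of length $h$ gives $\sup_{t\in[t_i,t_{i+1})}\EE\|y^N(t)\|_V^{2p}\leq e^{C_p h}\bigl(\EE\|u^N(t_{i+1}^-)\|_V^{2p}+C_p h\bigr)$.

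Combining the two substeps through the monotonicity $\EE\|u^N(t_{i+1}^-)\|_V^{2p}\leq \EE\|y^N(t_i^-)\|_V^{2p}$ produces the recursion
\begin{equation*}
\EE\|y^N(t_{i+1}^-)\|_V^{2p}\leq e^{C_p h}\bigl(\EE\|y^N(t_i^-)\|_V^{2p}+C_p h\bigr),
\end{equation*}
which, initialized at $y^N(t_0^-)=u_0$, is closed by the discrete Gronwall inequality into $\max_{0\leq i\leq N}\EE\|y^N(t_i^-)\|_V^{2p}\leq C\bigl(T,\EE\|u_0\|_V^{2p}\bigr)$, uniformly in $N$. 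Reinjecting this bound into the one-step estimates controls $\sup_{t\in[0,T]}\EE\|u^N(t)\|_V^{2p}$ and $\sup_{t\in[0,T]}\EE\|y^N(t)\|_V^{2p}$, while summing the deterministic energy identity over $i=0,\dots,N-1$ produces a telescoping sum whose right-hand side is dominated by the just established moment bound, thereby yielding the integral term $\EE\int_0^T\|u^N(s)\|_V^{2(p-1)}|Au^N(s)|_{\LL^2}^2\,ds$. The main delicate point is that estimates at the $V$-level are needed, so condition \textbf{(G2)} is essential to close the It\^o expansion; the nonlinearity itself causes no difficulty, thanks to the cancellation \eqref{A-B}, which is precisely what makes the splitting scheme so well-suited to this $V$-regular framework.
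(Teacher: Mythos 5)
Your proposal is essentially correct, but note that the paper does not actually prove this statement: Theorem \ref{moments_spscheme} is quoted from \cite{BeBrMi}, Lemma 4.2, so there is no internal proof to compare against. Your strategy --- pathwise $V$-energy dissipation on the deterministic substep via the cancellations $\langle B(u),u\rangle=0$ and \eqref{A-B}, It\^o's formula with \eqref{growthG_V} on the stochastic substep, and a discrete Gronwall recursion stitched through $u^N(t_i)=y^N(t_i^-)$ and $y^N(t_i^+)=u^N(t_{i+1}^-)$ --- is the standard route and is what the cited lemma does. Two small points of precision. First, to generate $\frac{d}{dt}\|u^N\|_V^{2p}$ you must test \eqref{def_uN} against $\|u^N\|_V^{2(p-1)}(I+A)u^N$, not against $\|u^N\|_V^{2(p-1)}Au^N$ alone (since $\|\cdot\|_V^2=|\cdot|_{\LL^2}^2+|\nabla\cdot|_{\LL^2}^2$); the resulting identity then carries $|\nabla u^N|_{\LL^2}^2+|Au^N|_{\LL^2}^2$ rather than $|Au^N|_{\LL^2}^2$, so your displayed equality should be an inequality $\leq 0$ after dropping the gradient term --- harmless for the conclusion. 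Second, the final telescoping for the dissipation integral is slightly less immediate than stated: summing $2p\nu\,\EE\int_{t_i}^{t_{i+1}}\|u^N\|_V^{2(p-1)}|Au^N|_{\LL^2}^2\,ds=\EE\|y^N(t_i^-)\|_V^{2p}-\EE\|u^N(t_{i+1}^-)\|_V^{2p}$ and crudely discarding the negative term gives a bound of order $N$; you must first convert $\EE\|u^N(t_{i+1}^-)\|_V^{2p}\geq e^{-C_ph}\,\EE\|y^N(t_{i+1}^-)\|_V^{2p}-C_ph$ so that the sum genuinely telescopes up to an $O(Nh)=O(T)$ remainder controlled by the uniform moment bound. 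With these adjustments, and the usual Galerkin or Yosida regularization to justify the It\^o and chain-rule computations at the $V$-level (as the paper does elsewhere), the argument closes.
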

The following result gives a bound of the difference between $u^N$ and $y^N$ (see \cite{BeBrMi}, Proposition 4.3).
\begin{prop}		\label{y-u}
Let $u_0$ be ${\mathcal F}_0$-measurable such that $\EE(\|u_0\|_V^4)<\infty$ and $G$ satisfies
conditions  {\bf (G1)} and {\bf (G2)}.  
Then there exists a positive constant $C:=C(T)$ such that for every integer $N\geq 1$
\begin{equation} 		\label{yN-uN}
\EE\int_0^T \|y^N(t) - u^N(t)\|_V^2 dt \leq \frac{C}{N}. 
\end{equation}
\end{prop}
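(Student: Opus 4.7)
The plan is to work interval by interval and exploit the matching rule $y^N(t_i)=u^N(t_{i+1}^-)$ built into the splitting scheme. On $[t_i,t_{i+1}]$ this yields the decomposition
\[
y^N(t)-u^N(t)\;=\;\bigl[y^N(t)-y^N(t_i)\bigr]+\bigl[u^N(t_{i+1}^-)-u^N(t)\bigr]\;=\;\int_{t_i}^{t}G(y^N(s))\,dW(s)+\bigl[u^N(t_{i+1}^-)-u^N(t)\bigr].
\]
Expanding $\|\cdot\|_V^2$ and taking expectation, the cross term vanishes: the $V$-valued stochastic integral is $\cF_{t_i}$-conditionally centered, while the deterministic displacement $u^N(t_{i+1}^-)-u^N(t)$ is $\cF_{t_i}$-measurable because on $[t_i,t_{i+1}]$ the process $u^N$ solves \eqref{def_uN} with the $\cF_{t_i}$-measurable initial datum $y^N(t_i^-)$. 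It therefore suffices to bound each of the two pieces, integrated over $[t_i,t_{i+1}]$ and summed over $i=0,\dots,N-1$, by $C/N$.

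For the stochastic term, the It\^o isometry in $V$ and condition \textbf{(G2)}, together with the uniform moment bound $\sup_s\EE\|y^N(s)\|_V^2\le C$ from Theorem \ref{moments_spscheme} (applicable since $\EE\|u_0\|_V^4<\infty$), give
\[
\EE\Bigl\|\int_{t_i}^{t}G(y^N(s))\,dW(s)\Bigr\|_V^2 \;=\; \EE\int_{t_i}^{t}\|G(y^N(s))\|_{\mathcal L_2(K,V)}^2\,ds \;\le\; C(t-t_i),
\]
whose integral in $t$ over $[t_i,t_{i+1}]$ is $\le C/N^2$, summing to $\le CT/N$.

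For the deterministic term, testing $\frac{d}{ds}u^N(s)=-F(u^N(s))$ against $Au^N(s)$ and using the antisymmetry \eqref{A-B} yields the energy identity
\[
\|u^N(s)\|_V^2+2\nu\int_{t_i}^{s}|Au^N(r)|_{\LL^2}^2\,dr\;=\;\|y^N(t_i^-)\|_V^2,
\]
so that $\|u^N\|_V$ is non-increasing on $[t_i,t_{i+1}]$ and $\int_{t_i}^{t_{i+1}}|Au^N(r)|_{\LL^2}^2\,dr\le(2\nu)^{-1}\|y^N(t_i^-)\|_V^2$. Starting from $u^N(t_{i+1}^-)-u^N(t)=-\int_t^{t_{i+1}}F(u^N(s))\,ds$, Cauchy--Schwarz in time combined with the 2D Gagliardo--Nirenberg estimate $|B(u)|_{\LL^2}^2\le C\|u\|_V^3|Au|_{\LL^2}$ controls the $\LL^2$-displacement; the interpolation $\|w\|_V^2\le|w|_{\LL^2}\,|Aw|_{\LL^2}$ together with the integrated bound on $|Au^N|^2$ then recovers the $V$-norm. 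Taking expectation and invoking the fourth-order moment estimates on $\|y^N(t_i^-)\|_V$ from Theorem \ref{moments_spscheme} gives $\EE\int_{t_i}^{t_{i+1}}\|u^N(t_{i+1}^-)-u^N(t)\|_V^2\,dt\le C/N^2$ per interval, hence $\le C/N$ after summation.

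The main technical obstacle is this deterministic estimate, because $F(u^N)=\nu Au^N+B(u^N)$ is generically only in $V'$, so the $V$-norm of $u^N(t_{i+1}^-)-u^N(t)$ cannot be controlled directly by a time integral of $\|F(u^N)\|_V$. The argument must pass through the $\LL^2$-displacement bound, available in 2D thanks to the improved regularity $u^N\in L^2(t_i,t_{i+1};\mathrm{Dom}(A))$, and then recover the $V$-norm via the interpolation $\|w\|_V^2\le|w|_{\LL^2}|Aw|_{\LL^2}$ and the integrated bound on $|Au^N|_{\LL^2}^2$ supplied by the energy identity.
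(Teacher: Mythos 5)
Your decomposition on each interval, the vanishing of the cross term, and the treatment of the stochastic piece via the It\^o isometry in $V$, condition \textbf{(G2)} and Theorem \ref{moments_spscheme} are all fine (note the paper itself gives no proof here; it cites \cite{BeBrMi}, Proposition 4.3). The gap is in the deterministic piece. Your claimed bound $\EE\int_{t_i}^{t_{i+1}}\|u^N(t_{i+1}^-)-u^N(t)\|_V^2\,dt\le C/N^2$ \emph{per interval} does not follow from your sketch, and in fact no bound of order $h^2=(T/N)^2$ depending only on $\|y^N(t_i^-)\|_V$ can hold: already for the linear flow $e^{-\nu\sigma A}v_0$ with $v_0\in V$ concentrated on a Fourier mode with eigenvalue $\lambda\sim 1/h$ one has $\int_0^h\|e^{-\nu hA}v_0-e^{-\nu\sigma A}v_0\|_V^2\,d\sigma\ge c_\nu\,h\,\|v_0\|_V^2$, i.e.\ order $h$ per interval, which sums to $O(1)$ over the $N$ intervals. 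Concretely, in your own chain of estimates the interpolation $\|w\|_V^2\le|w|_{\LL^2}|Aw|_{\LL^2}$ gives $\sup_t|w(t)|_{\LL^2}\lesssim h^{1/2}(\cdots)$ while $\int_{t_i}^{t_{i+1}}|Au^N(t)|_{\LL^2}\,dt\le h^{1/2}\big(\int_{t_i}^{t_{i+1}}|Au^N(s)|_{\LL^2}^2\,ds\big)^{1/2}\lesssim h^{1/2}\|y^N(t_i^-)\|_V$, so the product is $O(h)$ per interval, not $O(h^2)$; moreover $|Aw(t)|_{\LL^2}$ also contains the endpoint value $|Au^N(t_{i+1}^-)|_{\LL^2}$, a pointwise quantity that the per-interval energy identity does not control (it only controls $\int_{t_i}^{t_{i+1}}|Au^N|_{\LL^2}^2\,ds$).

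The way to close the argument is not to seek $O(h^2)$ on each interval but to arrange the per-interval bound in the form $h$ times that interval's share of a globally time-integrable dissipation, e.g.
\[
\int_{t_i}^{t_{i+1}}\|u^N(t_{i+1}^-)-u^N(t)\|_V^2\,dt\;\le\; C\,h\int_{t_i}^{t_{i+1}}\big(1+\|u^N(s)\|_V^2\big)\,|Au^N(s)|_{\LL^2}^2\,ds+\dots,
\]
and then sum over $i$, so that the total is $C\,h\,\EE\int_0^T(1+\|u^N(s)\|_V^2)|Au^N(s)|_{\LL^2}^2\,ds\le C\,h$ by \eqref{moments_splitting} with $p=2$ --- which is exactly where the hypothesis $\EE(\|u_0\|_V^4)<\infty$ enters. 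The smallness comes from a single factor of $h$ multiplied by a quantity that is summable across intervals, not from $h^2$ on each one. As written, your sketch neither produces a bound of this form nor explains how to avoid the uncontrolled endpoint term $|Au^N(t_{i+1}^-)|_{\LL^2}$, so the deterministic estimate does not close.
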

For technical reasons, let us consider the process $(z^N(t), t\in [0,T])$ which mixes $u^N$ and $y^N$, and is defined by
\begin{equation}		\label{def_zN}
z^N(t)=u_0 - \int_0^t F(u^N(s)) ds + \int_0^t G(y^N(s)) dW(s).
\end{equation}
The process $z^N$ is a.s. continuous on $[0,T]$. Note that $z^N(t_k)=y^N(t_k^-) = u^N(t_k)$ for $k=0, 1, \cdots, N$. 
The following result gives bounds of  the $V$-norm of $z^N-u^N$ and $z^N-y^N$. It is an extension of Lemma 4.4 in \cite{BeBrMi}. 
\begin{prop}			\label{prop_u-z}
Let $u_0$ be $V$-valued, ${\mathcal F}_0$-measurable such that $\EE(\|u_0\|_V^{2p})<\infty$ for some integer $p\geq 2$
and let $G$ satisfy conditions   {\bf (G1)}  and {\bf (G2)}. 
Then there exists a constant $C:=C(p,T)$ such that for every integer $N\geq 1$
\begin{equation}			\label{zN-uN_V}
\EE\Big( \sup_{t\in [0,T]}  \|z^N(t)-u^N(t)\|_V^{2p} \Big) \leq \frac{C}{N^{p-1}}  \; \;  \mbox{\rm and } \;  
\sup_{t\in [0,T]} \EE \big( \|z^N(t)-u^N(t)\|_V^{2p} \big) \leq \frac{C}{N^p}. 
\end{equation}
\end{prop}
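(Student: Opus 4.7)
The plan is to exploit the fact that $z^N$ and $u^N$ agree at the grid points and differ only by a stochastic integral on each subinterval $[t_k,t_{k+1})$. From \eqref{def_uN} and \eqref{def_zN}, on such a subinterval the drift contribution $-\int_{t_k}^t F(u^N(s))\,ds$ appears identically in both processes, so using $z^N(t_k)=u^N(t_k)$ one obtains the clean identity
\begin{equation*}
z^N(t) - u^N(t) = \int_{t_k}^t G(y^N(s))\,dW(s), \qquad t\in[t_k,t_{k+1}).
\end{equation*}
All the work now reduces to controlling a stochastic integral of length at most $h=T/N$.

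I would then apply the Burkholder-Davis-Gundy inequality for the $V$-valued It\^o integral on each interval $[t_k,t_{k+1}]$, use H\"older (Jensen) to pull the power $p$ through the time integral, and invoke the growth bound \eqref{growthG_V} together with the uniform $V$-moment control on $y^N$ provided by Theorem \ref{moments_spscheme}. This yields
\begin{equation*}
\EE \sup_{t\in[t_k,t_{k+1}]} \|z^N(t) - u^N(t)\|_V^{2p} \leq C_p\, h^{p-1} \int_{t_k}^{t_{k+1}} \EE \|G(y^N(s))\|_{\cL_2(K,V)}^{2p}\,ds \leq C\, h^p.
\end{equation*}
The second (pointwise) assertion in the proposition is immediate from the same chain applied for a fixed $t\in[t_k,t_{k+1})$ with $(t-t_k)\leq h$ in place of $h$, uniformly in $k$ and in $t$.

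For the first assertion I would then sum via the trivial majorization
\begin{equation*}
\sup_{t\in[0,T]} \|z^N(t)-u^N(t)\|_V^{2p} \leq \sum_{k=0}^{N-1} \sup_{t\in[t_k,t_{k+1}]} \|z^N(t)-u^N(t)\|_V^{2p},
\end{equation*}
take expectations, and gain the factor $N\cdot h^p = T^p/N^{p-1}$. The loss of one power of $N$ compared to the pointwise bound is the standard price for interchanging the pathwise supremum and the expectation across $N$ martingale increments supported on disjoint intervals.

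The only mildly delicate point is that assumption \textbf{(G2)} is stated with $G$ defined on $\mbox{\rm Dom}(A)$, whereas $y^N$ only lives in $V$ a.s.; however, the Lipschitz bound \eqref{LipG_V} combined with the density of $\mbox{\rm Dom}(A)$ in $V$ extends $G$ continuously to $V$ with the same constants $K_0,K_1,L_1$, so $G(y^N)$ is a bona fide $\cL_2(K,V)$-valued integrand and the BDG step is justified. I do not expect any further obstruction: the argument is essentially a subinterval-by-subinterval BDG estimate, and the strengthening from Lemma 4.4 of \cite{BeBrMi} to arbitrary $p\ge 2$ comes for free because Theorem \ref{moments_spscheme} already supplies the matching $V$-moments of $y^N$.
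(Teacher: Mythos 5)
Your proposal is correct and follows essentially the same route as the paper: the identity $z^N(t)-u^N(t)=\int_{t_k}^t G(y^N(s))\,dW(s)$ on each subinterval, a Burkholder--Davis--Gundy estimate interval by interval combined with the growth condition \eqref{growthG_V} and the uniform moment bound \eqref{moments_splitting}, summing the $N$ interval suprema to get the rate $N^{-(p-1)}$ for the global supremum, and the pointwise-in-$t$ version of the same bound for the rate $N^{-p}$. The only addition is your remark on extending $G$ from $\mbox{\rm Dom}(A)$ to $V$, which the paper passes over silently; it is a harmless clarification and does not change the argument.
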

Note that combining \eqref{yN-uN} and \eqref{zN-uN_V}, we deduce that if $\EE(\|u_0\|_V^4)<\infty$, 
\begin{equation} 			\label{zN-yN}
\EE\int_0^T \! \|z^N(t)-y^N(t)\|_V^2 dt \leq \frac{C}{N}
\end{equation}
for some constant $C:=C(T)$ independent of $N$. 
\begin{proof}
For $t\in [t_k, t_{k+1})$, $k=0, \cdots, N-1$ we have
\[ z^N(t)-u^N(t)=\int_{t_k}^t G(y^N(s)) dW(s).\]
Since $z^N(T)=u^N(T)$,  for any $p\geq 2$ the Burkholder-Davies-Gundy inequality implies 
\begin{align*} 				
\EE \Big( \sup_{t\in [0,T]} \|z^N(t)-u^N(t)\|_V^{2p} \Big) &
= \EE \Big( \sup_{0\leq k< N} \sup_{t\in [t_k, t_{k+1})} \|z^N(t)-u^N(t)\|_V^{2p}\Big) \nonumber \\
&\leq C_p \sum_{k=0}^{N-1} \EE\Big( \Big| \int_{t_k}^{t_{k+1}} \|G\big(s,y^N(s)\big)\|_{{\mathcal L}_2(K,V)}^2 ds \Big|^p \Big) \nonumber \\
&\leq C_p \sum_{k=0}^{N-1}\Big(\frac{T}{N}\Big)^p \sup_{t\in [0,T]} \big( K_0 + K_1 \EE(\|y^N(t)\|_V^{2p}) \big).
\end{align*}
Inequality \eqref{moments_splitting} concludes the proof of the first part of  \eqref{zN-uN_V}. Furthermore, 
\[ \sup_{t\in [0,T]} \EE\big(\|z^N(t)-u^N(t)\|_V^{2p}\big) = \sup_{0\leq k<N} \sup_{t\in [t_k, t_{k+1})} \EE\big(\|z^N(t)-u^N(t)\|_V^{2p}\big) .\]
A similar argument concludes the proof. 
\end{proof} 
\subsection{ A localized $L^2(\Omega)$ convergence}
Recall that  $X=\LL^4(D)\cap H$  is an interpolation space between $H$ and $V$ such that \eqref{interpol} holds. For every $M>0$, set
\begin{equation} 		\label{def_omegatilde}
\tilde{\Omega}_M(t):=\Big\{\omega \in \Omega \; : \;  \sup_{s\in [0,t]} \|u(s)(\omega)\|_X^4 \leq M\Big\} .
\end{equation}   
Note that once more, and unlike \cite{BeBrMi}, this set only depends on the solution $u$ of \eqref{2D-NS} and does not depend on the scheme.
Let $\tau_M=:\inf\{ t \geq 0 : \|u(t)\|_X^4 \geq M\} \wedge T$; then $\tau_M=T$ on $\Omega_M(T)$. The following  result 
improves Proposition 5.1 in
\cite{BeBrMi}. Note that in our case, both processes $u$ and $z^N$ have a.s. continuous trajectories.
\begin{prop} 		\label{prop5.1-mod}
Let $u_0$ be $V$-valued, ${\mathcal F}_0$-measurable such that $\EE(\|u_0\|_V^8)<\infty$ and suppose that $G$ satisfies the conditions {\bf (G1)} and {\bf (G2)}.  
 Then, there exist constants $C$ and $\widetilde{C(M)}$ such that
\begin{equation} 			\label{loc_cv_splitting}
\EE\Big(\! \sup_{t\in [0,\tau_M]} |z^N(t)-u(t)|_{\LL^2}^2 +\int_0^{\tau_M}\! \!\big[ \|u^N(t)-u(t)\|_V^2 + \|y^N(t)-u(t)\|_V^2\big] dt \Big) 
\leq \frac{C}{N}   e^{T \widetilde{C(M)}  },
\end{equation}
where 
\begin{equation}		\label{tildeC_splitting}
\widetilde{C(M)} :=    \frac{ 3^3 \bar{C}^2 }{2^5 \beta^3 \nu^3} M + C(\nu, L_1,\beta, \epsilon) , 
\end{equation}    
and   ${\bar C} $ is the constant defined in \eqref{interpol}  and $\beta <1 $. 
 \end{prop}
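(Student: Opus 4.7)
My plan is to apply It\^o's formula to $|z^N(t)-u(t)|_{\LL^2}^2$ on the stopped interval $[0,\tau_M]$, to exploit the antisymmetry of $B$ through a careful splitting of the bilinear contribution, and to reduce all "auxiliary" differences $u^N-z^N$ and $y^N-z^N$ to the $O(1/N)$ bounds supplied by Propositions~\ref{y-u} and~\ref{prop_u-z}. Since $z^N$ and $u$ are driven by the same Wiener process, It\^o yields
\begin{align*}
|z^N(t)-u(t)|_{\LL^2}^2 &= -2\int_0^t \langle F(u^N(s))-F(u(s)),\, z^N(s)-u(s)\rangle\, ds \\
&\quad + 2\int_0^t \big(z^N(s)-u(s),\, [G(y^N(s))-G(u(s))]\, dW(s)\big) \\
&\quad + \int_0^t \|G(y^N(s))-G(u(s))\|_{{\mathcal L}_2(K,H)}^2\, ds.
\end{align*}
The viscous part $\nu\langle A(u^N-u),z^N-u\rangle$, upon writing $u^N-u=(u^N-z^N)+(z^N-u)$ and applying Young, produces $\nu\|z^N-u\|_V^2$ kept on the left, plus a term in $\|u^N-z^N\|_V^2$ whose expected time integral is $O(1/N)$ by \eqref{zN-uN_V}. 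The stochastic piece contributes $2L_1|z^N-u|_{\LL^2}^2$ (the Gronwall variable) plus $O(1/N)$ remainders coming from $y^N-z^N$, controlled by \eqref{yN-uN} and \eqref{zN-uN_V}; the Burkholder inequality absorbs half of $\EE\sup|z^N-u|_{\LL^2}^2$ on the left.

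The main obstacle is the bilinear contribution $\langle B(u^N)-B(u), z^N-u\rangle$, which I would split as
\[
\langle B(u^N)-B(u),\, u^N-u\rangle + \langle B(u^N)-B(u),\, z^N-u^N\rangle .
\]
On $[0,\tau_M]$ we have $\|u\|_X^4\leq M$, so \eqref{B-B} applied with a parameter of order $\beta\nu$ gives
\[
|\langle B(u^N)-B(u), u^N-u\rangle| \leq \beta\nu\,\|u^N-u\|_V^2 + C_{\beta\nu}\,M\,|u^N-u|_{\LL^2}^2;
\]
re-expanding $u^N-u=(u^N-z^N)+(z^N-u)$, the leading $V$-norm reduces to a multiple of $\|z^N-u\|_V^2$ (absorbed into the viscous term for $\beta<1$) plus $O(1/N)$ remainders from Proposition~\ref{prop_u-z}, while the $\LL^2$-term yields the $\widetilde{C(M)}\,|z^N-u|_{\LL^2}^2$ Gronwall input together with similar $O(1/N)$ remainders. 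The cross term $\langle B(u^N)-B(u), z^N-u^N\rangle$ I would handle through the decomposition $B(u^N)-B(u)=B(u^N-u,u^N)+B(u,u^N-u)$ together with the trilinear bound $|b(a,b,c)|\leq\bar C\|a\|_X\|b\|_V\|c\|_X$; the $V$-norm of $u^N-u$ is again absorbed by viscosity, and the $\|z^N-u^N\|_X$ factor combined with $\LL^4$-moments of $u$, $u^N$ (via \eqref{moments_splitting} and \eqref{interpol}) produces another $O(1/N)$ contribution through \eqref{zN-uN_V}.

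Collecting everything and setting $\phi(t):=\EE\sup_{s\leq t\wedge\tau_M}|z^N(s)-u(s)|_{\LL^2}^2$, I arrive at
\[
\phi(t) + \tfrac{\nu}{2}\,\EE\int_0^{t\wedge\tau_M}\!\!\|z^N-u\|_V^2\, ds \;\leq\; \frac{C}{N} + \widetilde{C(M)}\int_0^t \phi(s)\, ds,
\]
with $\widetilde{C(M)}$ of the announced form $\frac{3^3\bar C^2}{2^5\beta^3\nu^3}M + C(\nu,L_1,\beta,\epsilon)$. Gronwall's lemma then delivers the estimate for $z^N-u$, and the $V$-norm bounds for $u^N-u$ and $y^N-u$ follow from the triangle inequality together with \eqref{yN-uN} and \eqref{zN-uN_V}. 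The hardest step is to arrange the decomposition so that only the absorbable quantity $\|z^N-u\|_V^2$ and the Gronwall variable $|z^N-u|_{\LL^2}^2$ appear self-referentially, with everything else reducing to the already-available $O(1/N)$ estimates.
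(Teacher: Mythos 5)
Your proposal is correct and follows essentially the same route as the paper's proof: It\^o's formula for $|z^N-u|_{\LL^2}^2$ on the stopped interval, the splitting of the bilinear contribution into $\langle B(u^N)-B(u),u^N-u\rangle$ (handled by \eqref{B-B} and the bound $\|u\|_X^4\le M$ on $[0,\tau_M]$) plus a cross term against $z^N-u^N$ treated by antisymmetry and interpolation, reduction of all $u^N-z^N$ and $y^N-z^N$ remainders to $O(1/N)$ via Propositions~\ref{y-u} and~\ref{prop_u-z}, Burkholder--Davis--Gundy for the martingale, and Gronwall. The only cosmetic difference is that you retain the dissipation as $\|z^N-u\|_V^2$ while the paper keeps $|\nabla(u^N-u)|_{\LL^2}^2$ on the left; the two are interchangeable up to the same $O(1/N)$ auxiliary bounds.
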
 
 \begin{proof}
Let us apply the It\^o formula to $|z^N(t\wedge \tau_M)-u(t\wedge \tau_M)|_{\LL^2}^2$. 
This is possible even if $z^N$ and $u$ are not regular enough. Indeed, we can use the Yosida approximation $e^{-\delta A}\big(z^N(t)-u(t)\big)$
for some $\delta >0$, apply the It\^o
formula to this smooth processes, and then pass to the limit as $\delta \to 0$ 
(see e.g. \cite{ChuMil}, step 4 of the proof of Theorem 2.4).  
This implies
\begin{align}
|z^N(t\wedge \tau_M)-u(t\wedge \tau_M)|_{\LL^2}^2 = \sum_{i=1}^2 T_i(t\wedge \tau_M) + I(t\wedge \tau_M), 
\end{align} 
where 
\begin{align*}
T_1(t\wedge \tau_M)= &-2\int_0^{t\wedge \tau_M} \big\langle F(u^N(s)) - F(u(s)) \, , \, z^N(s)-u(s) \big\rangle ds, \\
T_2(t\wedge \tau_M)=&\int_0^{t\wedge \tau_M} \|G(y^N(s)) - G(u(s))\|_{{\mathcal L}_2(K,H)}^2 ds, \\
I(t\wedge \tau_M)=&2\int_0^{t\wedge \tau_M} \big(  z^N(s)-u(s)\, , \, \big[ G(y^N(s)) - G(u(s)) ] dW(s)\big).
\end{align*}
The Lipschitz property {\bf (G1)}  
 implies 
\begin{align}			\label{estim_T2}
T_2(t\wedge \tau_M) \leq & 2 L_1 \Big[ \int_0^{t\wedge \tau_M} |z^N(s)-u(s)|_{\LL^2}^2 ds 
+\int_0^{t\wedge \tau_M} |z^N(s)-y^N(s)|_{\LL^2}^2 ds \Big] . 
\end{align}
Furthermore, $T_1(t\wedge \tau_M) =  \sum_{i=1}^4 T_{1,i}(t\wedge \tau_M)$, where
\begin{align*}
T_{1,1}(t\wedge \tau_M)= &-2 \nu \int_0^{t\wedge \tau_M}  \langle Au^N(s) - Au(s)) \, , \, u^N(s)-u(s) \rangle ds, \\
T_{1,2}(t\wedge \tau_M)=& -2\nu  \int_0^{t\wedge \tau_M}  \langle Au^N(s) - Au(s)) \, , \, z^N(s)-u^N(s) \rangle ds, \\
T_{1,3}(t\wedge \tau_M)=&  -2 \int_0^{t\wedge \tau_M}  \langle B(u^N(s), u^N(s)) - B(u(s),u(s)) \, , \, u^N(s)-u(s) \rangle ds,\\
T_{1,4}(t\wedge \tau_M)=&  -2 \int_0^{t\wedge \tau_M}  \langle B(u^N(s), u^N(s)) - B(u(s),u(s)) \, , \, z^N(s)-u^N(s) \rangle ds. 
\end{align*}
The definition of $A$ implies that
\begin{equation}			\label{estim_T11}
T_{1,1}(t\wedge \tau_M)=  -2\nu \int_0^{t\wedge \tau_M}  |\nabla u^N(s)- \nabla u(s)|_{\LL^2}^2 ds . 
\end{equation}
 The Cauchy Schwarz and Young inequalities imply that for any $\beta_2 >0$,
 \begin{equation}		\label{estim_T12}
 T_{1,2}(t\wedge \tau_M) \leq\nu  \beta_2 \int_0^{t\wedge \tau_M} |\nabla u^N(s) - \nabla u(s)|_{\LL^2}^2  ds 
 + \frac{\nu}{\beta_2} \int_0^t \|z^N(s)-u^N(s)\|_V^2 ds. 
 \end{equation}
Using the inequality \eqref{B-B}, we deduce that for any $\beta_3 >0$ and $\epsilon >0$, 
\begin{align}		\label{estim_T13}
 &T_{1,3}(t\wedge \tau_M) \leq  2\nu \beta_3 \int_0^{t\wedge \tau_M} \big( |\nabla u^N(s) - \nabla u(s)|_{\LL^2}^2 
  + |u^N(s)-u(s)|_{\LL^2}^2\big) ds \nonumber \\
 &\qquad  +2  C_{\nu \beta_3} \int_0^{t\wedge \tau_M} \| u(s)\|_X^4\, |u^N(s)-u(s)|_{\LL^2}^2 ds\nonumber \\
  \;    \leq  &2\nu \beta_3 \! \int_0^{t\wedge \tau_M} \! \! |\nabla [u^N(s) -  u(s)]|_{\LL^2}^2 ds 
  + 2 (\nu \beta_3 +C_{\nu \beta_3} M) (1+\epsilon) \! \int_0^{t\wedge \tau_M}\! \!  |z^N(t)-u(t)|_{\LL^2}^2 ds \nonumber \\
   & 
 +  C(\nu, \beta_3, \epsilon)  \int_0^{t\wedge \tau_M}\! \big( 1+ \|u(s)\|_X^4)  |z^N(t)-u^N(t)|_{\LL^2}^2 ds.  
 \end{align}

Finally, since $B$ is bilinear, the H\"older inequality implies 
\begin{align*}		
T_{1,4}(t\wedge \tau_M)   &= 2 \int_0^{t\wedge \tau_N}  \Big[ \big\langle B(u^N(s)-u(s), z^N(s)-u^N(s))\, , \,  u^N(s)\big\rangle \\ 
&\qquad\qquad \qquad +
\langle B(u(s), z^N(s)-u^N(s))\, , \, u^N(s)-u(s)\rangle \big] ds
\\  
& \leq    2  \int_0^{t\wedge \tau_M}   \|u^N(s)-u(s)\|_X \big[ \|u^N(s)\|_X + \|u(s)\|_X \big]   \|z^N(s)-u^N(s)\|_V  ds .
\end{align*} 
 Using the interpolation inequality \eqref{interpol} and the  Young  inequality, we deduce 
that for any $\beta_4>0$, there exists a positive constant $C(\nu, \beta_4)$ such that 
\begin{align*} & \sqrt{\bar{C}} \big| \nabla \big(u^N(s)-u(s)\big) \big|_{\LL^2}^{\frac{1}{2}} 
|u^N(s)-u(s)|_{\LL^2}^{\frac{1}{2}}  \|z^N(s)-u^N(s)\|_{V} 
\big[ \|u^N(s)\|_X + \|u(s)\|_X\big] \\
&\quad \leq \nu \beta_4  \big| \nabla \big(u^N(s)-u(s)\big) \big|_{\LL^2}^2 +  C |z^N(s)-u(s)|_{\LL^2}^2 +
C |z^N(s)-u^N(s)|_{\LL^2}^2  \\
&\qquad + C(\nu, \beta_4)  
\big[ \|u^N(s)\|^2_X + \|u(s)\|^2_X \big]   \|z^N(s)-u^N(s)\|_V^2.
\end{align*}
This implies 
\begin{align} \label{estim_T14}
&T_{1,4}(t\wedge \tau_M)  \leq \; \nu \beta_4  \int_0^{t\wedge \tau_M} \!  \big|\nabla\big[ u^N(s)-u(s)\big] \big|_{\LL^2}^2 ds 
+  C \int_0^{t\wedge \tau_M} \!   |z^N(s)-u(s)|_{\LL^2}^2  ds  \nonumber \\
&\quad +  
C(\nu, \beta_4)  \int_0^{t\wedge \tau_M}  \!\! \Big(  |u^N(s)-z^N(s)|_{\LL^2}^2    + 
\big[ \|u^N(s)\|_X^2 + \|u(s)\|_X^2\big] \|z^N(s)-u^N(s)\|_V^2 \Big) ds .  
  \end{align}

Collecting the upper estimates \eqref{estim_T2}--\eqref{estim_T14}, we deduce that for   $\beta_2+2\beta_3+\beta_4 <2$  
and  $t\in [0,T]$, 
\begin{align}			\label{sup_Z^N-u}
& \sup_{0\leq s\leq t} |z^N(s\wedge \tau_N) - u(s\wedge \tau_N)|_{\LL^2}^2 +  \nu(2-\beta_2 -2\beta_3-\beta_4) 
 \int_0^{t\wedge \tau_N} |\nabla[u^N(s)-u(s)]|_{\LL^2}^2 ds   \nonumber \\
& \; \leq  R(t) + \sup_{s\in [0,t]} I(s\wedge \tau_N) + \Big[ 2  (1+\epsilon) C_{\nu \beta _3}M  + 2L_1 + C(\nu,\beta_3,\epsilon) \Big]
 \! \int_0^{t\wedge \tau_M} \!\!
|z^N(s)-u(s)|_{\LL^2}^2 ds, 
\end{align}
where,  gathering all error terms, we let
\begin{align*}
R(t)=&  \int_0^{t\wedge \tau_M} \Big(   2 L_1 |z^N(s)-y^N(s)|_{\LL^2}^2  +  C(\nu, \beta_2, \beta_3, \beta_4)  
  \|z^N(s)-u^N(s)\|_V^2 \Big) ds\\
& + C(\nu, \beta_3, \epsilon)   \Big\{ \int_0^{t\wedge \tau_M} \|u(s)\|_X^8 ds\Big\}^{\frac{1}{2}}  
 \Big\{ \int_0^{t\wedge \tau_M} |z^N(s)-u^N(s)|_{\LL^2}^4 ds \Big\}^{\frac{1}{2}} \\
&  +  C(\nu, \beta_4)   \int_0^{t\wedge \tau_M} \! 
\big[ \|u^N(s)\|_X^2 + \|u(s)\|_X^2\big] \|z^N(s)-u^N(s)\|_V^2 ds.
\end{align*}
The Cauchy-Schwarz inequality, the integrability property $\EE(\|u_0\|_V^8)<\infty$, 
and the upper estimates \eqref{bound_u}, \eqref{moments_splitting},
\eqref{zN-uN_V} and \eqref{zN-yN} imply the existence of some positive constant $C$ (depending on the parameter $\beta_i$ and
 $\epsilon$ such that  for every $N,M$,
\begin{equation} 				\label{error_R}
\EE \Big( \sup_{t\in [0,T]} R(t)\Big)  \leq \frac{C}{N}.
\end{equation}
Using the  Burkholder-Davis-Gundy inequality, then the Young inequality and the Lipschitz condition {\bf (G1)} 
on $G$, 
we deduce that for any 
$\delta >0$, 
\begin{align} 				\label{maj_I_splitting}
\EE\Big( &\sup_{t\in [s\leq t]} I(t)\Big) \leq  6 \EE\Big( \Big\{ \int_0^{t\wedge \tau_M} \|G(s,y^N(s)) - G(s,u(s))\|_{{\mathcal L}_2(K,H)}^2
 |u^N(s)-u(s)|_{\LL^2}^2 ds \Big\}^{\frac{1}{2}} \Big) \nonumber  \\
  \leq & 6 \EE\Big( \sup_{s\in [0,t\wedge \tau_M]} |z^N(s)-u(s)|_{\LL^2}  \Big\{ \int_0^{t\wedge \tau_M}
   L_1 |y^N(s)-u(s)|_{\LL^2}^2 ds \Big\}^{\frac{1}{2}}\Big).  \nonumber \\
  \leq & \delta \EE\Big( \sup_{s\in [0,t]} |z^N(s\wedge \tau_M)-u(s\wedge \tau_M)|_{\LL^2}^2 \Big)
  + C(\delta)  \EE\int_0^{t\wedge \tau_M} 
  |z^N(s)-u(s)|_{\LL^2}^2 ds \nonumber  \\
  & + C(\delta)   \EE\int_0^{t\wedge \tau_M} 
  |y^N(s)-z^N(s)|_{\LL^2}^2 ds  \nonumber \\
  \leq & \delta \EE\Big( \! \sup_{s\in [0,t]} |z^N(s\wedge \tau_M)-u(s\wedge \tau_M)|_{\LL^2}^2\! \Big)
  + C(\delta)  \EE\int_0^{t\wedge \tau_M} \!\!
  |z^N(s)-u(s)|_{\LL^2}^2 ds + \frac{C(\delta)}{N},
\end{align}
where the last inequality is a consequence of \eqref{zN-yN}. 
The upper estimates \eqref{sup_Z^N-u}--\eqref{maj_I_splitting} imply that if $\delta <1$ and 
 $\bar{C}(\beta) := 2- \beta_2 + 2\beta_3 + \beta_4 >0$,  
\begin{align} 			\label{Gronwall_splitting}
&(1-\delta) \EE\Big( \!\sup_{s\in [0,t]} |z^N(s\wedge \tau_M) - u(s\wedge \tau_M)|_{\LL^2}^2 \!\Big) +
\nu \bar{C}(\beta) \EE\int_0^{t\wedge \tau_M} \!|\nabla [u^N(s)-u(s)| |_{\LL^2}^2 ds  \nonumber \\
&\; \leq \frac{C}{ N} + \Big[ 2 (1+\epsilon)  C_{\nu \beta_3} M + C(L_1,\nu,\beta_3, \epsilon, \delta) \Big]
\EE \int_0^t |z^N(s\wedge \tau_M) - u(s\wedge \tau_M)|_{\LL^2}^2 ds. 
\end{align}

The Gronwall inequality (disregarding the second term on   the left hand side of \eqref{Gronwall_splitting}) proves that 
\[ \EE \Big( \sup_{s\in [0,t]} |z^N(s\wedge \tau_M)-u(s\wedge \tau_M)|_{\LL^2}^2\Big) \leq \frac{C}{N}  \exp\big(T\, \widetilde{C(M)}\big),\]
where $\widetilde{C(M)}$ is defined by \eqref{tildeC_splitting}.  Indeed, the term   $\beta_3< 1$ has to be chosen first since
the other positive constants $\beta_2$ and $\beta_4$, which have to satisfy $\beta_2+\beta_4 < 2(1-\beta_3)$,  
only appear in the "error" term $R(t)$. 
Using this inequality in \eqref{Gronwall_splitting},   the definition of $C_{\nu \beta_3}$ given in \eqref{def_Ceta}, 
\eqref{zN-uN_V} and \eqref{zN-yN},  and changing  the ratio  $\frac{1+\epsilon}{(1-\delta) \beta_3^3}$  into $\frac{1}{\beta^3}$ 
  for some $\beta \in (0,1)$,  
we conclude the proof of \eqref{loc_cv_splitting}. 
\end{proof}
\medskip

\subsection{Strong $L^2(\Omega)$ speed of convergence of the splitting scheme} 		\label{sec_strong_splitting}
Since $\tau_M=T$ on $\tilde{\Omega}_M$, we can rewrite \eqref{loc_cv_splitting} as 
\[ \EE\Big(1_{\tilde{\Omega}_M} \Big\{  \sup_{t\in [0,T]} |z^N(t)-u(t)|_{\LL^2}^2 
+\int_0^T \! \!\big[ \|u^N(t)-u(t)\|_V^2 + \|y^N(t)-u(t)\|_V^2\big] dt \Big\}\Big) 
\leq \frac{C}{N}  e^{T \tilde{C}(M)  }  .
\]
The previous section provides an upper bound  of the $L^2(\Omega)$ - norm of the maximal error on each time step 
localized on the set $\Omega_M$. In order to deduce the strong speed of convergence, we next have to analyze this error on the complement
of this localization set. 

The first step is described in the following simple upper estimates, which follow from the H\"older inequality. 
Hence, we  suppose that $\EE(\|u_0\|_V^{2q})<\infty$ and that $p$ and $q$ are conjugate exponents; then
\begin{equation}			\label{Holder_L2tilde}
\EE \Big( 1_{\tilde{\Omega}_M^c} \sup_{t\in [0,T]} |z^N(t)-u(t)|_{\LL^2}^2 \Big) \leq C \Big[\PP(\tilde{\Omega}_M^c)\Big]^{\frac{1}{p}}
\Big[ \EE \Big( \sup_{t\in [0,T]} \big( |z^N(t)|_{\LL^2}^{2q} + |u(t)|_{\LL^2}^{2q} \big) \Big) \Big]^{\frac{1}{q}}. 
\end{equation}
Note that we have the similar upper estimate
\begin{align*} \EE \Big(& 1_{\tilde{\Omega}_M^c}  \!\int_0^T \!\! \|z^N(t)-u(t)\|_V^2 dt \! \Big) \\
&   \leq C \Big[\PP(\tilde{\Omega}_M^c)\Big]^{\frac{1}{p}}
\Big[ \EE\! \!\int_0^T \!\!\big( \|z^N(t)-u^N(t)\|_V^{2q} + \| u^N(t)\|_V^{2q} + \|u(t)\|_V^{2q}\big) dt \Big]^{\frac{1}{q}}  
\leq C \Big[\PP(\tilde{\Omega}_M^c)\Big]^{\frac{1}{p}},
\end{align*}
where the last upper estimate  is deduced from \eqref{bound_u}, \eqref{moments_splitting} and \eqref{zN-uN_V}. 
Theorem \ref{strong_wp} shows that $\EE(\sup_{t\in [0,T]} \|u(t)\|_V^2)<\infty$. 
We next prove that $\EE(\sup_{t\in [0,T]} |z^N(t)|_{\LL^2}^{2q})$ is bounded by a constant independent of $N$.     
\begin{lemma}			\label{supt_ZN}
Let $p\geq 2$ and $u_0$ be $V$-valued, ${\mathcal F}_0$-measurable such that    $\EE(\|u_0\|_V^{2p+1})<\infty$. 
Suppose that $G$ satisfies the conditons {\bf (G1)} and {\bf (G2)}.    
Then there exists a positive constant $C_p$ such that for every integer $N\geq 1$,
\begin{align*}
\EE\Big( \sup_{t\in [0,T]} |z^N(t)|_{\LL^2}^{2p} +  \nu 
 \int_0^T \|z^N(t)\|_V^{2p} dt \Big)  \leq C_p   . 
\end{align*}
\end{lemma}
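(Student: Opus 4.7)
The plan is to split the statement into two independent estimates, since the two terms on the left-hand side are quite different in nature.

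\textbf{Task 1 (the integral term).}  Set $w^N := z^N - u^N$. The elementary inequality $\|z^N\|_V^{2p} \leq 2^{2p-1}\bigl( \|u^N\|_V^{2p} + \|w^N\|_V^{2p}\bigr)$ and Fubini reduce the estimate of $\nu \EE \int_0^T \|z^N(t)\|_V^{2p} dt$ to uniform-in-$N$ bounds on $\EE \int_0^T \|u^N(t)\|_V^{2p}\,dt$ and $\EE \int_0^T \|w^N(t)\|_V^{2p}\,dt$. The first is $\leq T\sup_{t} \EE \|u^N(t)\|_V^{2p}$, which is controlled by \eqref{moments_splitting}; the second is $\leq T\, \EE \sup_{t}\|w^N(t)\|_V^{2p} \leq CT\,N^{-(p-1)} \leq CT$ by \eqref{zN-uN_V}. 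This part requires no new computation.

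\textbf{Task 2 (the supremum term).}  Apply It\^o's formula to $|z^N(t)|_{\LL^2}^{2p}$ using the semimartingale decomposition $dz^N = -F(u^N)\,dt + G(y^N)\,dW$, working in the Gelfand triple $V \hookrightarrow H \hookrightarrow V'$. The drift contributes $-2p \int_0^t |z^N|_{\LL^2}^{2p-2}\bigl[ \nu(\nabla z^N, \nabla u^N) + \langle B(u^N,u^N), z^N\rangle\bigr]\,ds$. Writing $z^N = u^N + w^N$ in both pairings, I use the antisymmetry $\langle B(u^N,u^N),u^N\rangle = 0$ from \eqref{B} to convert the bilinear part into $\langle B(u^N,u^N), w^N\rangle$, which by \eqref{majB-X} and \eqref{interpol} is bounded by $\beta\|w^N\|_V^2 + \tfrac{\bar C}{4\beta}\|u^N\|_V^2$ after choosing $\beta$ small. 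The parabolic cross term $(\nabla u^N, \nabla w^N)$ is handled by Young, producing $\tfrac12 |\nabla u^N|^2 + \tfrac12 |\nabla w^N|^2$; the first is absorbed into the strictly nonpositive dissipative term $-2p\nu|z^N|^{2p-2}|\nabla u^N|^2$, which I then discard. Finally, Young's inequality with exponents $p/(p-1)$ and $p$ turns each term of the form $|z^N|_{\LL^2}^{2p-2}X^2$ into $|z^N|_{\LL^2}^{2p} + X^{2p}$, so that the drift is bounded above by $C\bigl(|z^N|_{\LL^2}^{2p} + \|u^N\|_V^{2p} + \|w^N\|_V^{2p}\bigr)\,dt$ plus the discarded dissipation.

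\textbf{Closing with BDG and Gronwall.}  The It\^o correction is dominated via \eqref{growthG_H} by $p(2p-1)|z^N|_{\LL^2}^{2p-2}(K_0 + K_1 |y^N|_{\LL^2}^2)$ and then by $C(1 + |z^N|_{\LL^2}^{2p} + |y^N|_{\LL^2}^{2p})$ after Young. For the stochastic integral $M(t)$, the Burkholder--Davis--Gundy inequality together with $|G(y^N)^* z^N|_K^2 \leq \|G(y^N)\|_{\mathcal{L}_2(K,H)}^2 |z^N|_{\LL^2}^2$ and one more application of Young yield
\begin{equation*}
\EE \sup_{s\leq t} |M(s)| \leq \tfrac12 \EE \sup_{s\leq t} |z^N(s)|_{\LL^2}^{2p} + C\, \EE\!\int_0^t \bigl(1 + |z^N(s)|_{\LL^2}^{2p} + |y^N(s)|_{\LL^2}^{2p}\bigr) ds .
\end{equation*}
Absorbing the first term into the left-hand side and using that $\EE \int_0^T(\|u^N\|_V^{2p} + \|w^N\|_V^{2p} + |y^N|_{\LL^2}^{2p})\,ds$ is finite uniformly in $N$ by Theorem \ref{moments_spscheme} and Proposition \ref{prop_u-z}, Gronwall's lemma applied to $t \mapsto \EE \sup_{s\leq t} |z^N(s)|_{\LL^2}^{2p}$ yields the claimed bound.

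\textbf{Main obstacle.}  The crux is engineering the drift estimate so that the bilinear contribution never produces a \emph{naked} $\|z^N\|_V^{2p}$ on the right-hand side; only $\|u^N\|_V^{2p}$ and $\|w^N\|_V^{2p}$ are acceptable, since those admit uniform-in-$N$ integrated bounds while $\int_0^T \|z^N\|_V^{2p}\,ds$ is itself what is being estimated (and, a posteriori, could be recovered only via Task 1). The antisymmetry reduction $\langle B(u^N), z^N\rangle = \langle B(u^N), w^N\rangle$ combined with the asymmetric estimate \eqref{majB-X} placing $\|w^N\|_V^2$ on the ``test'' side and $\|u^N\|_X^2$ on the ``flow'' side is precisely what avoids this obstruction.
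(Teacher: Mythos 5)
Your overall architecture (reduce the $V$-integral to Theorem \ref{moments_spscheme} and Proposition \ref{prop_u-z}, then apply It\^o's formula to $|z^N(t)|_{\LL^2}^{2p}$ and close with Burkholder--Davis--Gundy and an absorption argument) matches the paper's, and your Task 1 is exactly the paper's first step. The gap is in your estimate of the bilinear drift term. The inequality you invoke, $|\langle B(u^N,u^N), w^N\rangle| \leq \beta\|w^N\|_V^2 + \tfrac{\bar C}{4\beta}\|u^N\|_V^2$, cannot be correct: the left-hand side is quadratic in $u^N$, so once Young's inequality has absorbed $\beta\|w^N\|_V^2$ the complementary term must be \emph{quartic} in $u^N$, namely of the form $\tfrac{1}{4\beta}\|u^N\|_X^2\,\|u^N\|_X^2\leq C\|u^N\|_V^4$ (replace $u^N$ by $\lambda u^N$ and let $\lambda\to\infty$ to see that a quadratic remainder is impossible; compare \eqref{B-B} and \eqref{def_Ceta}, where the dependence is $\|u_1\|_X^4$ --- the displayed form of \eqref{majB-X} is missing the squares). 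Once corrected, your Young step with exponents $p/(p-1)$ and $p$ turns $|z^N|_{\LL^2}^{2p-2}\|u^N\|_V^4$ into $|z^N|_{\LL^2}^{2p}+\|u^N\|_V^{4p}$, and the uniform-in-$N$ bound on $\EE\int_0^T\|u^N(s)\|_V^{4p}\,ds$ via \eqref{moments_splitting} requires $\EE(\|u_0\|_V^{4p})<\infty$, strictly more than the hypothesis $\EE(\|u_0\|_V^{2p+1})<\infty$. So as written the argument either rests on a false inequality or proves the lemma only under inflated moment assumptions.

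The constraint you single out as the main obstacle --- never letting a power of $\|z^N\|_V$ appear on the right-hand side --- is what forces you into this corner, and it is unnecessary. Your own Task 1, run with exponent $2p+1$ in place of $2p$ (exactly what $\EE(\|u_0\|_V^{2p+1})<\infty$ permits), gives $\EE\int_0^T\|z^N(s)\|_V^{2p+1}\,ds\leq C$ uniformly in $N$. The paper therefore estimates the bilinear contribution crudely as $|z^N|_{\LL^2}^{2(p-1)}|\nabla z^N|_{\LL^2}\|u^N\|_X^2\leq\|z^N\|_V^{2p-1}\|u^N\|_X^2$ and applies Young with exponents $\tfrac{2p+1}{2p-1}$ and $\tfrac{2p+1}{2}$, obtaining $\|z^N\|_V^{2p+1}+C\|u^N\|_V^{2p+1}$; both time integrals are then bounded by constants directly, with no Gronwall lemma and no antisymmetry, which is precisely why $2p+1$ moments suffice. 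If you insist on keeping the Gronwall structure, you would need a three-factor H\"older--Young splitting of $|z^N|_{\LL^2}^{2p-2}\,\|u^N\|_X^2\,|\nabla w^N|_{\LL^2}$ in which the $u^N$-exponent stays at or below $2p+1$; a short computation shows this drives the $w^N$-exponent above $2p$, beyond what Proposition \ref{prop_u-z} provides, so the repair has to follow the paper's route.
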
 
 
 \noindent  The proof of this lemma is given in the Appendix, Section \ref{A1}.  \\

We next have to make sure that the threshold $M(N)$ is chosen to balance the upper estimates \eqref{loc_cv_splitting} and \eqref{Holder_L2tilde}.

\noindent {\bf Case 1: Linear growth diffusion coefficient.}
Suppose that $\EE(\|u_0\|_V^{2q})<\infty$. Then using 
 the Gagliardo-Nirenberg inequality \eqref{interpol}, we deduce 
\begin{align*}
\PP \big( \tilde{\Omega}_{M(N)}^c\big)  = &\PP\Big( \sup_{t\in [0,T]} \|u(t)\|_X^4 \geq M(N)\Big) 
 =   \PP\Big( \sup_{t\in [0,T]} \|u(t)\|_V^4 \geq \frac{4 M(N)}{\bar{C}^2} \Big)  \\
\leq & \frac{\bar{C}^q}{2^q M(N)^{\frac{q}{2}}} \EE\Big( \sup_{t\in [0,T]} \|u(t)\|_V^{2q}\Big) \leq \frac{C}{M(N)^{\frac{q}{2}}},
\end{align*}     
where the last upper estimate  is a consequence of \eqref{bound_u}. 
To balance the right hand sides of \eqref{loc_cv_splitting} and \eqref{Holder_L2tilde}, we choose $M(N)\to \infty$ as $N\to \infty$ such that
 \begin{equation} 			\label{constraint_lingrowth}
 \frac{1}{N} \exp( T \widetilde{C(M(N))})   \asymp C(q) \frac{1}{M(N)^{\frac{q-1}{2}}}.   
 \end{equation}   
Taking logarithms and using \eqref{tildeC_splitting}, this comes down to 
 \[ -\ln(N) +  2 (1+\epsilon) C_{\nu \beta}  M(N) T  \asymp -\frac{q-1}{2} \ln(M(N)), \]   
for some  $\beta\in (0,1)$   and $\epsilon \in (0,1)$, where $C_{\nu \beta}$ is defined in \eqref{def_Ceta}.  Let
\begin{equation} 			\label{M(N)_splitting_lingrowth}
M(N):= \frac{1}{2 (1+\epsilon) C_{\nu \beta} T} \Big[ \ln(N) -  
\frac{q-1}{2}  \ln\big(\ln(N)\big)\Big] \asymp C(\nu, \beta, \epsilon, T) \ln(N). 
\end{equation}
Then $M(N)\to \infty$ as $N\to \infty$ and \eqref{constraint_lingrowth} is satisfied. This yields the following result, where the first upper estimate
follows from \eqref{constraint_lingrowth} and \eqref{zN-uN_V}, while the second one is deduced from the fact that $y^N(t_k^+) = u^N(t_{k+1}^-)$. 
\begin{theorem}		\label{strong_splitting_LG}
Suppose that $u_0$ is $V$-valued such that  $\EE(\|u_0\|_V^{2q+1})<\infty$  for some $q\geq 4$  and that $G$ satisfies conditions 
 {\bf (G1)} and {\bf (G2)}.     
Then there exists a constant $C>0$ such that 
\begin{align} 			\label{strong_cv_splitting_LG1}
\EE\Big( &\sup_{t\in [0,T]}  \big[ |z^N(t)-u(t)|_{\LL^2}^2 + |u^N(t)-u(t)|_{\LL^2}^2 \big]   \nonumber \\
&+ \int_0^T \big[ \|z^N(s)-u(s)\|_V^2 + \|u^N(s)-u(s)\|_{V}^2 + \|y^N(s)-u(s)\|_V^2\big]  ds \! \Big) \leq   \frac{C}{\ln(N)^{\frac{q-1}{2}}},   \\
\EE\Big( &\sup_{k=1, \cdots, N} \big[ |u^N(t_k^+)-u(t_k)|_{\LL^2}^2 + |y^N(t_k^+)-u(t_k)|_{\LL^2}^2 \big] \Big) \leq 
\frac{C}{\ln(N)^{\frac{q-1}{2}}}.   
				\label{strong_dis_splitting_LG}
\end{align} 
\end{theorem}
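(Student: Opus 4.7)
\textbf{Proof plan for Theorem \ref{strong_splitting_LG}.} The plan is to decompose $\Omega = \tilde{\Omega}_{M(N)} \cup \tilde{\Omega}_{M(N)}^c$ with a threshold $M(N) \to \infty$ as $N \to \infty$, control the $L^2(\Omega)$ error on each piece, and optimize the choice of $M(N)$. On the localization set, since $\tau_{M(N)} = T$, Proposition \ref{prop5.1-mod} yields directly
\begin{equation*}
\EE\Big(1_{\tilde{\Omega}_{M(N)}} \Big[\sup_{t\in [0,T]} |z^N(t)-u(t)|_{\LL^2}^2 + \int_0^T \big(\|u^N(t)-u(t)\|_V^2 + \|y^N(t)-u(t)\|_V^2\big) dt\Big]\Big) \leq \frac{C}{N} e^{T \widetilde{C(M(N))}}.
\end{equation*}
On the complement, the Hölder inequality \eqref{Holder_L2tilde} (and its analogue for the $V$-integral) combined with the uniform moment bounds provided by Theorem \ref{strong_wp}, Theorem \ref{moments_spscheme}, and Lemma \ref{supt_ZN} yields, with $p$ and $q$ conjugate exponents and with a constant $C$ independent of $N$,
\begin{equation*}
\EE\Big(1_{\tilde{\Omega}_{M(N)}^c} \Big[\sup_{t\in [0,T]} |z^N(t)-u(t)|_{\LL^2}^2 + \int_0^T \|z^N(t)-u(t)\|_V^2 dt\Big]\Big) \leq C \big[\PP(\tilde{\Omega}_{M(N)}^c)\big]^{1/p}.
\end{equation*}

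Next, I would estimate the tail probability: by the Markov inequality combined with the Gagliardo--Nirenberg bound \eqref{interpol} and \eqref{bound_u},
\begin{equation*}
\PP(\tilde{\Omega}_{M(N)}^c) = \PP\Big(\sup_{t\in [0,T]} \|u(t)\|_X^4 \geq M(N)\Big) \leq \frac{C}{M(N)^{q/2}}.
\end{equation*}
The two bounds are balanced exactly when $\frac{1}{N} e^{T \widetilde{C(M(N))}} \asymp M(N)^{-(q-1)/2}$, which, in view of the linear growth of $\widetilde{C(M)}$ in $M$ (see \eqref{tildeC_splitting}), forces the logarithmic choice $M(N) \asymp C(\nu,\beta,\epsilon,T)\, \ln(N)$ from \eqref{M(N)_splitting_lingrowth} and yields the common speed $C \ln(N)^{-(q-1)/2}$.

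It remains to pass from $z^N - u$ to $u^N - u$ and $y^N - u$. For the $\LL^2$-sup norm I would use the triangle inequality $|u^N(t) - u(t)|_{\LL^2} \leq |z^N(t) - u(t)|_{\LL^2} + |z^N(t) - u^N(t)|_{\LL^2}$, and control the second term by Proposition \ref{prop_u-z}, whose bound $\EE(\sup_t \|z^N - u^N\|_V^{2p}) \leq C/N^{p-1}$ decays faster than any power of $\ln(N)$. For the $V$-integrals, the bound on $\int_0^T \|u^N - u\|_V^2 dt$ comes directly from Proposition \ref{prop5.1-mod}, while the $y^N - u$ integral follows from combining it with \eqref{zN-yN}, and the $z^N - u$ integral via \eqref{zN-uN_V}. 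The discrete-time estimate \eqref{strong_dis_splitting_LG} is immediate from the continuous-time bound, since at grid points $u^N(t_k^+) = u^N(t_k) = z^N(t_k)$, and $y^N(t_k^+) = u^N(t_{k+1}^-)$ is a value of the already-controlled process $u^N$.

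The main obstacle is the calibration of $M(N)$ and the polynomial-versus-exponential tradeoff it reflects: the exponential factor $e^{T \widetilde{C(M(N))}}$ in the localized error grows exponentially in $M(N)$, while the tail probability $\PP(\tilde{\Omega}_{M(N)}^c)$ decays only polynomially in $M(N)$ because the available moments of $u$ are only polynomial under the linear-growth assumption \textbf{(G1)}--\textbf{(G2)}. This tradeoff is precisely what forces the speed of convergence to the logarithmic scale and rules out a polynomial rate in this setting; the exponential-moment improvement in the additive-noise case discussed later in the paper is exactly what breaks this barrier. A secondary technical point is ensuring that $\EE(\sup_t |z^N(t)|_{\LL^2}^{2q})$ is bounded uniformly in $N$, which is handled by Lemma \ref{supt_ZN} and is essential to make the Hölder step on the complement yield an $N$-independent constant.
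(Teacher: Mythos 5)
Your proposal is correct and follows essentially the same route as the paper: localize via Proposition \ref{prop5.1-mod} on $\tilde{\Omega}_{M(N)}$, apply H\"older plus the uniform moment bounds (Theorem \ref{strong_wp}, Theorem \ref{moments_spscheme}, Lemma \ref{supt_ZN}) on the complement, estimate the tail by Markov and \eqref{interpol}, and balance with the logarithmic choice \eqref{M(N)_splitting_lingrowth}, passing from $z^N$ to $u^N$ and $y^N$ via \eqref{zN-uN_V} and the grid identities. The exponent bookkeeping ($[\PP(\tilde{\Omega}_{M(N)}^c)]^{1/p}\asymp M(N)^{-(q-1)/2}$) and the identification of the polynomial-tail-versus-exponential-constant tradeoff as the source of the logarithmic rate are both exactly as in the paper.
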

\begin{remark}
Note that  if $u_0$ is a deterministic element of $V$, or more generally if $\|u_0\|_V$ has moments of all orders 
(for example if $u_0$ is a 
$V$-valued Gaussian random variable independent of the noise $W$),  then the speed of convergence of the current splitting
scheme is any negative power of $\ln(N)$. 
\end{remark}
\medskip 

\noindent{\bf Case 2: Additive noise.} 
Suppose that $G(u):=G\in {\mathcal L}_2(K,V)$, that is the noise is additive, or more generally that $G$ satisfies the conditions 
{\bf (G1)} and {\bf (G2)}   
 with $K_1=0$, that is $\|G(u)\|^2_{{\mathcal L}_2(K,V)}\leq K_0$. 
Then for any constant $\alpha >0$
using an exponential Markov inequality and the Gagliardo-Niremberg inequality \eqref{interpol}, we deduce
\begin{equation}				\label{upper_exp_splitting}
 \PP(\tilde{\Omega}_M^c) \leq  \PP\Big( \sup_{t\in [0,T]}  \| u(t)\|_X^2 \geq  \sqrt{M}  \Big)  \leq
\PP\Big( \sup_{t\in [0,T]} \|u(t)\|_V^2 \geq \frac{2 \sqrt{M}}{\bar{C}}\Big) .
\end{equation}    
 We next prove that for an additive noise, or in a slightly more general setting, for $\alpha >0$ small enough, 
$\EE[ \exp(\alpha \sup_{t\in [0,T]} \| u(t)\|_{V}^2)]<\infty$. In the case of an additive noise, this result is a particular case of 
\cite{HaiMat}, Lemma A.1. In this reference, the periodic Navier Stokes equation is written in vorticity formulation $\xi(t) = \partial_1 u_2(t) -
\partial_2 u_1(t)$. The velocity $u$ projected on divergence free fields can be deduced from $\xi$ by the Biot-Savart kernel, and $|\nabla u(t)|_{\LL^2}
\leq C |\xi(t)|_{\LL^2}$. We extend this result to the case of a more general diffusion coefficient whose Hilbert-Schmidt norm is bounded. 
 Recall that the Poincar\'e inequality implies the existence of a  constant $\tilde{C}>0$ such that 
if we set $|||u|||^2:= |\nabla u|^2_{\LL^2} + |Au|^2_{\LL^2}$ for $u\in \mbox{\rm Dom}(A)$, then we have
\begin{equation}			\label{Poincare}
\|u\|^2_V= |  u |^2_{\LL^2} + | \nabla u |^2_{\LL^2} \leq  \tilde{C} |||u|||^2  .
\end{equation} 

\begin{lemma}   			\label{exp_mom}
Let $u_0\in V$ and $G$ satisfy conditions  {\bf (G1)} and {\bf (G2)}  
with $K_1=0$, that is $\|G(t,u)\|_{{\mathcal L}_2(K,V)}^2 \leq K_0$. 
Then the solution $u$ to \eqref{2D-NS} satisfies 
\begin{equation}					\label{moments_exp}
 \EE \Big\{ \exp \Big[\alpha \Big( \sup_{0\leq s\leq T} \|u(s)\|_V^2 \Big)   
 + \nu \int_0^T \! |Au(s)|_{\LL^2}^2 ds \Big) \Big] \Big\} \leq 
 3 \exp\big( \alpha[ \|u_0\|_V^2 + T K_0]\big)
\end{equation}
for $\alpha \in (0, \alpha_0]$ and $\alpha_0= \frac{\nu}{4 K_0 \tilde{C}}$, where $\tilde{C}$ is defined in \eqref{Poincare}. 
\end{lemma}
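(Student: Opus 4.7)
The plan is to build an exponential supermartingale from the It\^o decomposition of $\|u(t)\|_V^2$ and conclude by Doob's maximal inequality. The It\^o formula can be justified rigorously by a Yosida approximation $e^{-\delta A}u$ together with a localization $\tau_n=\inf\{t:\|u(t)\|_V\geq n\}$ (with $\tau_n\to T$ a.s.\ by Theorem~\ref{strong_wp}), as in step~4 of the proof of Theorem~2.4 of~\cite{ChuMil}.

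First I apply It\^o to $\|u(t)\|_V^2=\bigl((I+A)u,u\bigr)$. The key point is that the nonlinearity drops out completely, since
\[
\bigl\langle (I+A)u,B(u,u)\bigr\rangle=\langle B(u,u),u\rangle+\langle B(u,u),Au\rangle=0
\]
by the two cancellation identities~\eqref{B} and~\eqref{A-B}. Combined with the hypothesis $\|G(u)\|_{\mathcal{L}_2(K,V)}^2\leq K_0$ this yields
\[
d\|u\|_V^2\leq\bigl[-2\nu\bigl(|\nabla u|_{\LL^2}^2+|Au|_{\LL^2}^2\bigr)+K_0\bigr]\,dt+dM_t,\qquad d\langle M\rangle_t\leq 4K_0\|u(t)\|_V^2\,dt,
\]
where $M_t=2\int_0^t\bigl((I+A)u(s),G(u(s))\,dW(s)\bigr)$.

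Next I would introduce the candidate process
\[
\Phi(t):=\exp\!\Bigl(\alpha\|u(t)\|_V^2+\alpha\nu\!\int_0^t|Au(s)|_{\LL^2}^2\,ds-\alpha K_0 t\Bigr)
\]
and apply It\^o to $\Phi$. After adding the drift contributions in the exponent, including the It\^o correction $\tfrac{\alpha^2}{2}\Phi\,d\langle M\rangle\leq 2\alpha^2 K_0\Phi\|u\|_V^2\,dt$, and majorizing $\|u\|_V^2\leq\tilde C(|\nabla u|_{\LL^2}^2+|Au|_{\LL^2}^2)$ via the Poincar\'e inequality~\eqref{Poincare}, the drift of $\Phi$ takes the form
\[
\Phi\bigl\{(-2\alpha\nu+2\alpha^2 K_0\tilde C)|\nabla u|_{\LL^2}^2+(-\alpha\nu+2\alpha^2 K_0\tilde C)|Au|_{\LL^2}^2\bigr\}\,dt+\alpha\Phi\,dM_t.
\]
The threshold $\alpha_0=\nu/(4K_0\tilde C)$ is calibrated precisely so that $2\alpha^2 K_0\tilde C\leq\alpha\nu/2$ for every $\alpha\in(0,\alpha_0]$, which makes both bracketed coefficients nonpositive; hence $\Phi$ is a positive local supermartingale. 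An essentially parallel computation (the drift in the exponent doubles while the It\^o correction quadruples, but the signs still work out when $\alpha\leq\alpha_0$) shows that $\Phi^2$ is \emph{also} a positive local supermartingale on the full range $\alpha\leq\alpha_0$. Localization along $\tau_n$ together with Fatou upgrade both to true supermartingales on $[0,T]$.

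To conclude, Doob's maximal inequality applied to the nonnegative supermartingale $\Phi^2$ gives $\PP(\sup_t\Phi(t)>\mu)\leq\Phi(0)^2/\mu^2$, which integrated yields $\EE[\sup_{t\in[0,T]}\Phi(t)]\leq 2\Phi(0)=2e^{\alpha\|u_0\|_V^2}$, while the supermartingale property of $\Phi$ at $t=T$ gives $\EE[\Phi(T)]\leq e^{\alpha\|u_0\|_V^2}$. The pointwise estimates $\exp(\alpha\|u(t)\|_V^2)\leq e^{\alpha K_0 T}\Phi(t)$ and $\exp\bigl(\alpha\nu\int_0^T|Au|_{\LL^2}^2\,ds\bigr)\leq e^{\alpha K_0 T}\Phi(T)$ (obtained by dropping the nonnegative integral and the nonnegative $\alpha\|u(T)\|_V^2$, respectively) translate these into the marginal bounds $\EE[\exp(\alpha\sup_s\|u(s)\|_V^2)]\leq 2e^{\alpha(\|u_0\|_V^2+TK_0)}$ and $\EE[\exp(\alpha\nu\int_0^T|Au|^2\,ds)]\leq e^{\alpha(\|u_0\|_V^2+TK_0)}$, whose sum is $3e^{\alpha(\|u_0\|_V^2+TK_0)}$, accounting for the constant~$3$ in the statement. \emph{The one genuinely delicate step} is the final fusion of these marginal estimates into the joint bound~\eqref{moments_exp}: the naive multiplicative route gives $\exp(\alpha M^*+\alpha\nu I)\leq e^{2\alpha K_0 T}\sup_t\Phi(t)\cdot\Phi(T)$ and would force a Cauchy--Schwarz or Young step that both halves the admissible range of $\alpha$ and doubles the exponent on the right-hand side; obtaining the statement as written requires dominating the joint exponential by the \emph{sum} $\sup_t\Phi(t)+\Phi(T)$ rather than by the product, and it is precisely the simultaneous supermartingality of $\Phi$ and $\Phi^2$ for the entire range $\alpha\leq\alpha_0$ that makes both the threshold $\alpha_0$ and the constant $3$ sharp.
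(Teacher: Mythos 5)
Your construction of the exponential supermartingale is the same mechanism the paper uses: the nonlinearity cancels via \eqref{B} and \eqref{A-B}, the quadratic variation of $M_t=2\int_0^t(u,G(u)\,dW)_V$ is controlled by $4K_0\|u\|_V^2\le 4K_0\tilde C\,|||u|||^2$ through \eqref{Poincare}, and $\alpha_0=\nu/(4K_0\tilde C)$ is exactly the calibration that makes the It\^o correction absorbable by the dissipation. Your two marginal bounds, $\EE[\exp(\alpha\sup_s\|u(s)\|_V^2)]\le 2e^{\alpha(\|u_0\|_V^2+TK_0)}$ via Doob applied to $\Phi^2$ and $\EE[\exp(\alpha\nu\int_0^T|Au|^2ds)]\le e^{\alpha(\|u_0\|_V^2+TK_0)}$ via the supermartingale property of $\Phi$ at $t=T$, are both correct for the full range $\alpha\le\alpha_0$.

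The genuine gap is the final step. The left-hand side of \eqref{moments_exp} is the expectation of a \emph{product} $\exp(\alpha\sup_s\|u(s)\|_V^2)\cdot\exp(\alpha\nu\int_0^T|Au|^2ds)$, and your proposed fusion --- ``dominating the joint exponential by the sum $\sup_t\Phi(t)+\Phi(T)$ rather than by the product'' --- rests on the inequality $e^{A+B}\le e^A+e^B$, which is false whenever $e^{-A}+e^{-B}<1$, i.e.\ whenever both quantities are moderately large. The simultaneous supermartingality of $\Phi$ and $\Phi^2$ does not rescue this: it yields the two marginals whose \emph{sum of expectations} is $3e^{\alpha(\|u_0\|_V^2+TK_0)}$, which is a different (and weaker) assertion than \eqref{moments_exp}. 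The paper avoids the issue by never separating the two terms: the It\^o identity gives, for every $t$, the pointwise bound
\begin{equation*}
\alpha\Big(\|u(t)\|_V^2+\nu\int_0^t|||u(s)|||^2ds\Big)\le \alpha\big(\|u_0\|_V^2+K_0T\big)+\sup_{s\le T}\Big(\alpha M^0_s-\alpha\nu\int_0^s|||u(r)|||^2dr\Big),
\end{equation*}
with $M^0_s=2\int_0^s(u,G(u)\,dW)_V$, and the single random variable $X=\exp\big(\sup_s(\alpha M^0_s-\alpha\nu\int_0^s|||u|||^2)\big)$ satisfies $\PP(X\ge e^K)\le e^{-2K}$ for $\alpha\le\alpha_0$ by comparison with the exponential martingale $\exp\big(\tfrac{2\alpha_0}{\alpha}M^0-\tfrac12\langle\tfrac{2\alpha_0}{\alpha}M^0\rangle\big)$; a layer-cake integration then gives $\EE(X)\le 3$, which is where the constant $3$ actually comes from. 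If you want to salvage your route, you should run your $\Phi$-argument on the combined exponent from the start (i.e.\ take the supremum of the whole bracket), rather than prove two marginal bounds and try to multiply them back together.
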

In order to make this paper as self-contained as possible, we prove this lemma in the appendix, section \ref{A2}.

Let $u_0\in V$; then for $\alpha_0=\frac{\nu}{4K_0 \tilde{C}}$, 
the inequalities \eqref{upper_exp_splitting} and  \eqref{moments_exp} imply
\[ \PP(\tilde{\Omega}_M^c) 
\leq 3 e^{\alpha_0 K_0 T}\,  \exp\Big( -  2 \alpha_0 \frac{\sqrt{M}}{\bar{C}}\Big) ,\]
where   $\bar{C}$ is defined by 
\eqref{interpol}.   We then have to choose $M(N)\to \infty$ as
$N\to \infty$ to balance the right hand sides of \eqref{loc_cv_splitting} and \eqref{Holder_L2tilde}, that is such that for some $p>1$, 
\[    \exp\Big( -\frac{2 \alpha_0 \sqrt{M(N)}}{p \bar{C} }\, T \Big)  \asymp  c_2 \frac{T}{N}  \exp\big( \widetilde{C(M(N))} T\big)\]
for some positive constant  $c_2$. Taking logarithms, we look for $M(N)$ such that 
\[ - \frac{2\alpha_0}{p\bar{C}} \sqrt{M(N)} \asymp -\ln(N) +   2(1+\epsilon) C_{\nu \beta} M(N) T ,  \]
where  $\beta \in (0, 1)$,  $\epsilon >0$ and $C_{\nu \beta}$ is defined by \eqref{def_Ceta}.   
Set $X=\sqrt{M(N)}$,
  $a_2= 2(1+\epsilon) C_{\nu \beta}  T $ 
and  $a_1= \frac{2 \alpha_0}{p\bar{C}} $.  
We have to solve the equation $a_2 X^2 + a_1 X -\ln(N)=0$. The positive root of this polynomial is
 equal  to $\sqrt{\frac{\ln (N)}{a_2}} + O(1)$   as $N\to \infty$ and 
 $\exp\big(-\frac{2 \alpha_0 \sqrt{M(N)}}{p \bar{C} }\big) \asymp C \exp\big(- 2 \alpha_0 \frac{\sqrt{\ln (N)}}{\sqrt{a_2} p \bar{C} }\big)$.  
Thus, we deduce the following rate of convergence of the splitting scheme.
\begin{theorem}		\label{strong_cv_splitting_add}
Let $u_0\in V$ and $G$ satisfy the conditions {\bf (G1)} and {\bf (G2)}  
with $K_1=0$, that is $\|G(s,u)\|_{{\mathcal L}(K,V)}^2 \leq K_0$.
Then 
\begin{align} 			\label{rate_splitting_add}
&\EE\Big( \sup_{t\in [0,T]}  \big[ |z^N(t)-u(t)|_{\LL^2}^2 + |u^N(t)-u(t)|_{\LL^2}^2 \big]    \\
&\; + \int_0^T \big[ \|z^N(s)-u(s)\|_V^2 + \|u^N(s)-u(s)\|_{V}^2 + \|y^N(s)-u(s)\|_V^2\big]  ds \! \Big) \leq C  e^{-\gamma \sqrt{\ln(N)}},  \nonumber\\
\EE\Big( &\sup_{k=1, \cdots, N} \big[ |u^N(t_k^+)-u(t_k)|_{\LL^2}^2 + |y^N(t_k^+)-u(t_k)|_{\LL^2}^2 \big] \Big) \leq C  e^{-\gamma \sqrt{\ln(N)}}, 
\label{rate_split_discrete_add}
\end{align} 
where 
 \[ \gamma <  \frac{\alpha_0 }{\bar{C}^2} \sqrt{\frac{2^9 \nu^3}{3^3 T}}. \] 
\end{theorem}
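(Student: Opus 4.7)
My plan is to execute the strategy outlined in Section~\ref{sec_strong_splitting}: split the expectation according to the localization set $\tilde\Omega_{M(N)}$ from \eqref{def_omegatilde} with a threshold $M(N)\uparrow\infty$, apply the localized convergence of Proposition~\ref{prop5.1-mod} on $\tilde\Omega_{M(N)}$, and on its complement use the H\"older-type bound \eqref{Holder_L2tilde} together with a refined tail estimate coming from the exponential moment of Lemma~\ref{exp_mom}. Since $\tau_{M(N)}=T$ on $\tilde\Omega_{M(N)}$, Proposition~\ref{prop5.1-mod} directly gives the localized error bound $\tfrac{C}{N}e^{T\widetilde{C(M(N))}}$ for $\sup_t|z^N(t)-u(t)|_{\LL^2}^2$ and for the integrals of $\|u^N(s)-u(s)\|_V^2$ and $\|y^N(s)-u(s)\|_V^2$. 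To pass from $z^N$ to $u^N$ in the supremum norm I would use the elementary splitting $u^N-u=(u^N-z^N)+(z^N-u)$ together with Proposition~\ref{prop_u-z}, which yields $\EE\sup_t|u^N(t)-z^N(t)|_{\LL^2}^2=O(N^{-\eta})$ for any $\eta<1$, so this contribution is negligible with respect to the $e^{-\gamma\sqrt{\ln N}}$ target rate. On $\tilde\Omega_{M(N)}^c$ I apply \eqref{Holder_L2tilde} with a fixed conjugate pair $(p,q)$; the needed $L^{2q}(\Omega)$ bounds are uniform in $N$ thanks to Theorem~\ref{strong_wp} for $u$, Theorem~\ref{moments_spscheme} for $u^N$ and $y^N$, and Lemma~\ref{supt_ZN} for $z^N$, all applicable because $u_0\in V$ is deterministic and the bounded additive coefficient satisfies \textbf{(G1)}--\textbf{(G2)} with $K_1=0$.

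The crucial input, and the point where the argument diverges qualitatively from the linear-growth case, is the tail bound on $\PP(\tilde\Omega_M^c)$. Combining \eqref{upper_exp_splitting} with the exponential Markov inequality applied to Lemma~\ref{exp_mom} at $\alpha=\alpha_0=\tfrac{\nu}{4K_0\tilde C}$, and using the Gagliardo--Nirenberg inequality \eqref{interpol} to convert $\{\sup_t\|u(t)\|_X^2\ge\sqrt{M}\}$ into $\{\sup_t\|u(t)\|_V^2\ge 2\sqrt{M}/\bar C\}$, one obtains
\[ \PP(\tilde\Omega_M^c)\;\le\;3\,e^{\alpha_0(\|u_0\|_V^2+K_0T)}\exp\!\big(-\tfrac{2\alpha_0}{\bar C}\sqrt{M}\big). \]
This exponential decay is precisely what replaces the polynomial Markov bound of Theorem~\ref{strong_splitting_LG} and is what upgrades the logarithmic rate to a near-polynomial one.

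Adding the two contributions, the total error is controlled by $\tfrac{C}{N}e^{T\widetilde{C(M(N))}}+C\exp\!\big(-\tfrac{2\alpha_0}{p\bar C}\sqrt{M(N)}\big)$. Balancing calls for an $M(N)$ making these two terms comparable in logarithm. Setting $X=\sqrt{M(N)}$, $a_1=\tfrac{2\alpha_0}{p\bar C}$ and, using $\widetilde{C(M)}=\tfrac{3^3\bar C^2}{2^5\beta^3\nu^3}M+O(1)$ from \eqref{tildeC_splitting}, $a_2=2(1+\epsilon)C_{\nu\beta}T$, this reduces to solving $a_2X^2+a_1X=\ln N$; the positive root is $X=\sqrt{\ln N/a_2}+O(1)$, so both sides are of common order $\exp\!\big(-\tfrac{2\alpha_0}{p\bar C\sqrt{a_2}}\sqrt{\ln N}\big)$. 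Substituting the explicit value of $C_{\nu\beta}$ from \eqref{def_Ceta} and letting $p\downarrow 1$, $\epsilon\downarrow 0$, $\beta\uparrow 1$ yields any exponent strictly below $\tfrac{\alpha_0}{\bar C^2}\sqrt{2^9\nu^3/(3^3T)}$, which is the constant $\gamma$ stated. The discrete-time bound \eqref{rate_split_discrete_add} follows because $u^N(t_k^+)=z^N(t_k)$ and $y^N(t_k^+)=u^N(t_{k+1})=z^N(t_{k+1})$ by construction of the splitting scheme, and the residual modulus-of-continuity term $|u(t_{k+1})-u(t_k)|_{\LL^2}^2$ is of order $N^{-1}$ by the strong temporal regularity of $u$, hence negligible. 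I expect the main bookkeeping obstacle to be threading the three free parameters $\beta,\epsilon,p$ through Proposition~\ref{prop5.1-mod} and \eqref{Holder_L2tilde} so that their respective limits can be taken independently while the constraint $T\widetilde{C(M(N))}\ll\sqrt{\ln N}$ is preserved, as this is what guarantees that the sharp constant $\gamma$ is indeed attained.
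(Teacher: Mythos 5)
Your proposal follows the paper's argument essentially verbatim: the same localization set $\tilde\Omega_{M(N)}$, the same use of Proposition~\ref{prop5.1-mod} on the good set and the H\"older bound \eqref{Holder_L2tilde} on its complement, the same exponential tail estimate from Lemma~\ref{exp_mom} via \eqref{upper_exp_splitting}, and the identical quadratic balancing $a_2X^2+a_1X=\ln N$ with $a_2=2(1+\epsilon)C_{\nu\beta}T$, yielding the stated constant $\gamma$. The only blemish is the identification $y^N(t_k^+)=u^N(t_{k+1})$, which under the paper's conventions should read $y^N(t_k^+)=u^N(t_{k+1}^-)$; your subsequent appeal to the temporal modulus of continuity of $u$ is exactly what absorbs this discrepancy, so the conclusion stands.
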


 Note that when  $\nu$ increases,  the upper bound of the exponent $\gamma$ increases. 
\begin{remark}
Note that the statements of Theorems \ref{strong_splitting_LG} and \ref{strong_cv_splitting_add} are valid
 if the diffusion coefficient $G$ depends on the time parameter $t\in [0,T]$ and satisfies the global growth and Lipschitz versions
of  {\bf (G1)} and {\bf (G2)}.   
We have removed the time dependence of $G$ 
to focus on the main arguments used to obtain strong convergence results.
\end{remark}

\section{Euler time schemes} 	 \label{sec_Euler}
\subsection{Description of the fully implicit scheme and first results}   \label{known_Euler}

In this section, we have to be more specific in the definition of the noise. Let ${\mathcal K}$ be a Hilbert space, $Q$ be a trace-class
operator in ${\mathcal K}$ and $W:=(W(t), t\in [0,T])$ be a ${\mathcal K}$-valued Wiener process with covariance $Q$. 
Let $K=Q^{\frac{1}{2}} {\mathcal K}$ denote the RKHS of the Gaussian process $W$. Let ${\mathcal G}:{\mathcal K}\to H$ be a linear operator and 
suppose that analogs of conditions {\bf (G1)} and
{\bf (G2)} are satisfied with ${\mathcal G}$ instead of $G$, and the operator norms ${\mathcal L}({\mathcal K},H)$ 
(resp. ${\mathcal L}({\mathcal K},V)$)
instead of  the Hilbert-Schmidt norms ${\mathcal L}_2(K,H)$  (resp. ${\mathcal L}_2({\mathcal K},V)$), with constants 
$\bar{K}_i$, $i=0,1$ and $\bar{L}_1$. 
Then the diffusion coefficient  $G={\mathcal G}\circ Q^{-\frac{1}{2}}$ satisfies conditions {\bf (G1)} and {\bf (G2)} 
with constants $K_i=\mbox{\rm Trace} (Q) \bar{K}_i$ and $L_1=\mbox{\rm Trace}(Q) \bar{L}_1$.

Let us first recall the fully implicit time discretization scheme of the stochastic 2D Navier-Stokes introduced by E.~Carelli and A.~Prohl in \cite{CarPro}. 
 As in the previous section, let  $t_k=\frac{kT}{N}$, $k=0, \cdots, N,$ denote the time grid. 
When studying a space time discretization using finite elements,
 one needs to have a stable pairing of the velocity and the pressure which
 satisfy the discrete LBB-condition (see e.g. \cite{CarPro}, page 2469 and pages 2487-2489). Stability issues are crucial and the pressure has
 to be discretized together with the velocity. In this section, our aim is to obtain bounds for the strong error of an Euler time scheme. Thus, as
 in the previous section, we may define the scheme for the velocity projected on divergence free fields (see \cite{CarPro}, Section 3). 
 \smallskip
 
\noindent {\bf Fully implicit Euler scheme} 
{\it Let $u_0$ be a $V$-valued,  ${\mathcal F}_0$-measurable  
random variable and set $u_N(t_0)=u_0$. 
For $k=1, \cdots, N,$ find
 $u_N(t_k) \in  V $ such that $\PP$  a.s. for  all  
 $\phi \in V$, 
\begin{align}    \label{full-imp1}
\big( u_N(t_k) - u_N(t_{k-1}) , \phi \big) +&  \frac{T}{N} \Big[ \nu \big( \nabla u_N(t_k) , \nabla \phi\big) + \big\langle B(u_N(t_k), u_N(t_k)), 
\phi\big\rangle \Big]\nonumber  \\
&\qquad \qquad \qquad 
= \big( G( u_N(t_{k-1})) \,  \Delta_k W , \phi\big) , 
\end{align}
 where $\Delta_k W = W(t_k) - W(t_{k-1})$. } \\

In the study of the Euler discretization schemes, we will need some H\"older regularity of the solution.
   This is proved by means of semigroup theory;  see 
 \cite{Pri},  Proposition 3.4 and \cite{CarPro}, Lemma 2.3. 
  \begin{prop}
  Let $u_0$ be  ${\mathcal F}_0$-measurable  such that $\EE(\|u_0\|_V^{2p}) <\infty$ for some $p\in [2,4]$. 
  Let $G$ satisfy conditions {\bf (G1)} and {\bf (G2)}.   
  Then for $\eta \in (0,\frac{1}{2})$, we have
  \begin{align}
  \EE\big( \|u(t)-u(s)\|_{\LL^4}^p\big) \leq C\; |t-s|^{\eta p}, \label{HolderL4}\\
  \EE\big( \|u(t)-u(s)\|_V^p \big) \leq C \; |t-s|^{\frac{\eta p}{2}}.   \label{HolderV}
  \end{align}
  \end{prop}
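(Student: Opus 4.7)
The natural approach is to work with the mild (semigroup) formulation of \eqref{2D-NS}. Let $S(t)=e^{-\nu t A}$ denote the Stokes semigroup on $H$. Since $u_0\in V$ a.s., the solution given by Theorem~\ref{strong_wp} satisfies
\[
u(t)=S(t)u_0-\int_0^t S(t-r)B\big(u(r)\big)\,dr+\int_0^t S(t-r)G\big(u(r)\big)\,dW(r),
\]
and for $0\le s<t\le T$ I would split $u(t)-u(s)=\sum_{i=1}^{5}J_i(s,t)$ in the usual way: $J_1=(S(t-s)-I)S(s)u_0$, the two integrals from $0$ to $s$ with the increment $\big(S(t-r)-S(s-r)\big)$ acting on $B(u(r))$ and $G(u(r))$ respectively, and the two integrals from $s$ to $t$ with the kernel $S(t-r)$. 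The plan is to estimate each term in the target norm (either $\LL^4$ or $V$) and to use the moment bounds of Theorem~\ref{strong_wp} together with the standard smoothing inequalities $\|A^{\alpha}S(t)\|_{{\mathcal L}(H)}\le C t^{-\alpha}$ for $\alpha\ge 0$ and $\|(S(\tau)-I)A^{-\alpha}\|_{{\mathcal L}(H)}\le C\tau^{\alpha}$ for $\alpha\in[0,1]$, combined with the 2D Sobolev embedding $V\hookrightarrow \LL^{4}$.

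For the $\LL^4$ estimate \eqref{HolderL4}, I would use $\|v\|_{\LL^4}\le C\|v\|_V$ and therefore control every term in the stronger norm $V$ with exponent $\eta$, namely bound $\|J_i(s,t)\|_V$ in $L^p(\Omega)$ by $C|t-s|^{\eta}$. The deterministic part $J_1$ is controlled by writing it as $A^{\eta}(S(t-s)-I)A^{-\eta}A^{1/2}S(s)u_0$ and using the gradient bound on $u_0$. The drift terms use $\|B(u)\|_{V'}\le C\|u\|_{\LL^4}^2\le C\|u\|_V^2$ and the factorization $A^{1/2}S(\sigma)=A^{1/2-\eta}S(\sigma)\cdot A^{\eta}$, giving integrable singular kernels $\sigma^{-(1/2+\eta)}$ (integrable since $\eta<1/2$). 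The stochastic terms are handled by the Burkholder–Davis–Gundy inequality applied after the factorization: the $L^p$ moment of $\int_s^t S(t-r)G(u(r))dW(r)$ in $V$ is bounded by $C\big(\int_s^t (t-r)^{-2\eta}\EE\|G(u(r))\|_{{\mathcal L}_2(K,V)}^p\,dr\big)$, which is finite by (G2) and the $V$-moment bound, and the remainder $\int_0^s[S(t-r)-S(s-r)]G(u(r))dW(r)$ is treated analogously by inserting $(S(t-s)-I)A^{-\eta}A^{\eta}$. This yields the exponent $\eta p$.

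For the $V$-estimate \eqref{HolderV}, the very same decomposition is used but now with an extra factor of $A^{1/2}$ applied to each $J_i$. One more half-derivative has to be paid for, so the admissible Hölder exponent is halved. Concretely, the stochastic terms produce the kernel $(t-r)^{-(1+2\eta)}$ which is integrable in $r$ only when we replace $\eta$ by $\eta/2$; similarly, the drift and semigroup terms pick up the factor $|t-s|^{\eta/2}$ after using $\|B(u)\|_{V'}\le C\|u\|_V^2$ together with $\EE(\sup_t\|u(t)\|_V^{2p})<\infty$.

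The step I expect to be the most delicate is the drift term, since $B(u,u)$ is only a quadratic object and one has to choose the $V'$ (or a slightly weaker fractional) target space so that the singular kernel $(t-r)^{-(1/2+\eta)}$ (respectively $(t-r)^{-(1+\eta)/2}$) remains integrable while the available moment information on $u$ in $V$ still applies. Once the correct fractional power of $A$ is factored between the nonlinearity and the semigroup, Hölder's inequality in time combined with Theorem~\ref{strong_wp} closes the bound, and a straightforward application of the Burkholder–Davis–Gundy inequality (using $p\ge 2$) finishes the stochastic terms, yielding the two claimed inequalities.
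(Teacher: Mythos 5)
The paper itself does not prove this proposition: it is imported from the literature (Printems, Prop.~3.4, and Carelli--Prohl, Lemma~2.3) with the one-line indication that it ``is proved by means of semigroup theory''. Your mild-formulation/factorization argument is indeed the method of those references, so the overall route is the intended one.

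There is, however, a genuine gap in your treatment of \eqref{HolderL4}. You propose to deduce the $\LL^4$ bound from $\|v\|_{\LL^4}\le C\|v\|_V$ by proving $\EE\|J_i(s,t)\|_V^p\le C|t-s|^{\eta p}$ for every term. If that were available, \eqref{HolderV} would hold with exponent $\eta p$ rather than $\eta p/2$ --- which contradicts your own (correct) remark, when you turn to \eqref{HolderV}, that measuring the increment in $V$ costs an extra half derivative and halves the exponent. The step that actually fails is the drift term: from $\|B(u)\|_{V'}\le C\|u\|_{\LL^4}^2$ one only gets $\|A^{1/2}S(\sigma)B(u(r))\|_{\LL^2}\le C\sigma^{-1}\|B(u(r))\|_{V'}$, and $\sigma^{-1}$ is not integrable; the kernel $\sigma^{-(1/2+\eta)}$ you invoke corresponds to measuring $B(u(r))$ in $H^{-2\eta}$, which for $\eta<1/2$ carries a strictly stronger norm than $V'$ and is not controlled by $\|u\|_{\LL^4}^2$ alone. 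The correct way to reach \eqref{HolderL4} is to run the whole decomposition in the intermediate space $H^{1/2}=\mbox{\rm Dom}(A^{1/4})$, which embeds into $\LL^4$ in dimension two (compare the interpolation inequality \eqref{interpol}), so that only a quarter of a derivative is paid on each term; this is exactly why the $\LL^4$ rate is twice the $V$ rate. A secondary point you gloss over is the free-evolution term: the factorization gives $\|(S(t-s)-I)S(s)u_0\|_V\le C(t-s)^{\eta}s^{-\eta}\|u_0\|_V$, with a singularity at $s=0$, and even in $\LL^4$ the exponent obtainable uniformly in $s$ from $u_0\in V$ alone is $1/4$; reaching $\eta$ close to $1/2$ there requires either the parabolic smoothing of $u(s)$ or an argument integrated in $s$, neither of which your one-line treatment supplies.
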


\par Let us recall Lemma 3.1 in \cite{CarPro}, which proves moment estimates of the solution to \eqref{full-imp1}. 
Note that here only dyadic moments are computed because of the induction argument which relates two consecutive dyadic
numbers (see step 4 of the proof of Lemma 3.1  in \cite{BrCaPr}).  
\begin{lemma}  \label{reg_scheme}
Let  $u_0$ be ${\mathcal F}_0$-measurable such that $\EE\big( \|u_0\|_V^{2^q} \big)<\infty$    for some integer $q\in [2,\infty)$.
 Assume that $G$ satisfies the conditions {\bf (G1)}  and {\bf (G2)}.  
Then there exists a $\PP$ a.s. unique sequence of solutions  $\big\{u_N(t_k)\}_{k=1}^N$  of \eqref{full-imp1}, 
such that each random variable $u_N(t_k)$ is 
 ${\mathcal F}_{t_k}$-measurable and satisfies:
\begin{equation}   \label{moments_scheme}
 \sup_{N\geq 1}  \EE\Big( \max_{1\leq k \leq N} \|u_N(t_k)\|_V^{2^q} + 
 \nu \frac{T}{N} \sum_{k=1}^N \| u_N(t_k)\|_V^{2^q-2} |Au_N(t_k)|_{\LL^2}^2 
 \Big) \leq C(T,q), 
  \end{equation} 
 where $C(T,q)$ is a constant which depends on $T$, the constants $K_i$,  $i=0,1$   in conditions {\bf (G1)} 
 and {\bf (G2)}, and also depends on $\EE(\|u_0\|_V^{2^q})$.
\end{lemma}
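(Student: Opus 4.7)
My plan has two parts: first to establish $\omega$-wise existence and uniqueness of the scheme, and then to derive the moment bound by induction on $q$. For fixed $k$, the equation \eqref{full-imp1} is, conditional on $\mathcal{F}_{t_{k-1}}$, a nonlinear elliptic problem in $V$: find $u \in V$ such that $u + h\nu Au + hB(u,u) = \xi$ in $V'$, where $h = T/N$ and $\xi := u_N(t_{k-1}) + G(u_N(t_{k-1}))\Delta_k W \in V$ by {\bf (G2)}. Existence follows from a Galerkin reduction plus Brouwer's fixed point theorem, using that the map is coercive in $V$: indeed $\langle u + h\nu Au + hB(u,u), u\rangle = |u|_{\LL^2}^2 + h\nu|\nabla u|_{\LL^2}^2$ by \eqref{B}. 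Passing to the weak limit exploits the compact embedding $V \hookrightarrow \LL^4$ for the trilinear term. For uniqueness, testing the difference of two solutions $w = u^{(1)} - u^{(2)}$ against itself and using \eqref{B-B} yields $|w|_{\LL^2}^2(1 - h C_\beta \|u^{(1)}\|_X^4) \leq 0$, which gives uniqueness a.s.\ (possibly after a continuation/dyadic refinement argument in $k$). Measurability of $u_N(t_k)$ with respect to $\mathcal{F}_{t_k}$ follows since it is a measurable selection from the solution set of a continuous equation in the data $(u_N(t_{k-1}), \Delta_k W)$.

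For the moment bounds I would proceed by induction on $q$. In the \emph{base case} I test \eqref{full-imp1} against $\phi = u_N(t_k) + A u_N(t_k)$. The identity $2(a-b,a) = |a|_{\LL^2}^2 - |b|_{\LL^2}^2 + |a-b|_{\LL^2}^2$ applied to both the $H$-part and the $V$-part produces the discrete energy identity
\[
\tfrac{1}{2}\bigl[\|u_N(t_k)\|_V^2 - \|u_N(t_{k-1})\|_V^2 + \|u_N(t_k) - u_N(t_{k-1})\|_V^2\bigr] + h\nu\bigl[|\nabla u_N(t_k)|_{\LL^2}^2 + |A u_N(t_k)|_{\LL^2}^2\bigr] = \mathcal{R}_k,
\]
with $\mathcal{R}_k := \bigl(G(u_N(t_{k-1}))\Delta_k W, u_N(t_k) + A u_N(t_k)\bigr)$; the trilinear terms vanish by \eqref{B} and \eqref{A-B}. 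Writing $\mathcal{R}_k$ as the sum of a martingale increment $(G(u_N(t_{k-1}))\Delta_k W, u_N(t_{k-1}) + A u_N(t_{k-1}))$ plus a remainder absorbed by Young into $\tfrac{1}{2}\|u_N(t_k) - u_N(t_{k-1})\|_V^2$, summing over $k$, taking the maximum and expectation, and closing with the BDG inequality and the growth bounds \eqref{growthG_H}--\eqref{growthG_V} yields the base estimate through discrete Gronwall. For the \emph{induction step} $q \to q+1$, I multiply the above identity by $2^q \|u_N(t_k)\|_V^{2^{q+1}-2}$ and apply the convexity inequality $a^{2^q} - b^{2^q} \leq 2^q a^{2^q - 1}(a-b)$ with $a = \|u_N(t_k)\|_V^2$ and $b = \|u_N(t_{k-1})\|_V^2$ to obtain a telescoping recursion for $\|u_N(t_k)\|_V^{2^{q+1}}$ together with the correctly weighted dissipation $\nu h \|u_N(t_k)\|_V^{2^{q+1}-2}|A u_N(t_k)|_{\LL^2}^2$. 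The crucial point---and what makes this dyadic induction go through---is that the bilinear term was already killed in the unweighted identity, so no nonlinear contribution appears at any level of the induction. The weighted stochastic terms are handled by BDG together with {\bf (G1)}--{\bf (G2)}, and lower-order powers $\|u_N\|_V^{2^q}$ are absorbed by the induction hypothesis.

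The main difficulty I expect is the bookkeeping at the induction step: after multiplication by $\|u_N(t_k)\|_V^{2^{q+1}-2}$, the Young absorptions of the stochastic increment and the higher-order terms in the convexity expansion generate numerous remainders, which must be carefully sorted into three categories: (i) terms dominated by the weighted dissipation on the left-hand side, (ii) contributions of strictly lower order in $\|u_N\|_V$ handled by the induction hypothesis, and (iii) terms of the form $Ch \sum_k \|u_N(t_k)\|_V^{2^{q+1}}$ absorbable by discrete Gronwall. The dyadic restriction on $2^q$ in the statement is precisely a consequence of this squaring-and-induction scheme, which couples two consecutive exponents rather than allowing arbitrary continuous interpolation.
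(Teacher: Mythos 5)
Your proposal is correct and follows essentially the same route as the proof the paper relies on for this lemma (it cites Lemma 3.1 of \cite{CarPro} rather than reproving it): Galerkin plus Brouwer for solvability, testing against $u_N(t_k)+Au_N(t_k)$ so that \eqref{B} and \eqref{A-B} annihilate the nonlinearity, BDG and a discrete Gronwall argument for the base moment, and the dyadic induction coupling consecutive exponents $2^q\to 2^{q+1}$ that the paper explicitly attributes to step 4 of the proof of Lemma 3.1 in \cite{BrCaPr}. The only caveat worth recording is that your uniqueness argument, as written, gives uniqueness only on the event where $h\beta+hC_\beta\|u^{(1)}\|_X^4<1$, i.e.\ for $N$ large depending on $\omega$; this is the same a.s.-uniqueness subtlety handled (via measurable selection) in the cited references, and you do flag it.
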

For $k=0, \cdots, N$, let $e_k:=u(t_k) - u_N(t_{k})$ denote the error of this scheme (note that $e_0=0$). 
Then for any $\phi \in V$ and $j=1, \cdots, N$,  we have
\begin{align}											\label{e-e}
(e_{j} - e_{j-1}, \phi) & +  \int_{t_{j-1}}^{t_j}  \Big[ \nu \big( \nabla u(s) - \nabla u_N(t_{j})\, ,\,  \nabla \phi) 
+ \big\langle  B(u(s)) - B(u_N(t_{j})) \, ,\,  \phi\big\rangle \Big] ds \nonumber  \\
&=
\Big(\phi \, , \,  \int_{t_{j-1}}^{t_j} \big[ G( u(s)) - G( u_N(t_{j-1}))\big] dW(s)  \Big).
\end{align}

\subsection{A localized convergence result} 
The first result states a localized upper bounds  of the error terms. 
This is due to the nonlinear term, but unlike \cite{CarPro}, it depends on $u$
and not on $u_N$.  Given $M>0$ and $k=1, \cdots, N$, set
\begin{equation}				\label{Omegak}
\Omega_k^M:=\Big\{ \omega \in \Omega \, :\, \max_{1\leq j\leq k} |\nabla u(t_j)|_{\LL^2}^2 \leq M\Big\} \in {\mathcal F}_{t_k}.
\end{equation} 
The following proposition  is one of the main results of this section. The modification with respect to Theorem 3.1 in \cite{CarPro} is the
localization set which does not depend on the approximation. This will be crucial to obtain a speed of  $L^2(\Omega)$- strong convergence, 
and not only that the scheme converges in probability.
\begin{prop} \label{Loc_cv_Euler} 
Let $G$ satisfy  the growth and Lipschitz conditions {\bf (G1)} and {\bf (G2)}.  
Let $u_0$ be such that $\EE(\|u_0\|_V^8) <\infty$. 
Then for $\Omega_k^M$  defined by \eqref{Omegak} and $N$ large enough, we have for every $k=1, \cdots, N$:
\begin{equation}				\label{loc_moments}
\EE\Big( 1_{\Omega_{k-1}^M } \max_{1\leq j \leq k}  \Big[  |e_j|_{\LL^2}^2   + {\nu} \frac{T}{N}
\sum_{j=1}^k   |\nabla e_j|_{\LL^2}^2  \Big] \Big) \leq C \; \exp\big[C_1(M) T\big] \;  \Big(\frac{T}{N}\Big)^\eta , 
\end{equation}
for  some constant $C>0$, $\eta \in (0, \frac{1}{2} )$,   and   
\begin{equation}				\label{def_C1_Euler}
C_1 (M) = \frac{ (1+\bar{\epsilon}) \bar{C}^2 } {2 \nu} \, M + C(\bar{\epsilon})\, L_1 , 
\end{equation}   
 where  $\bar{C}$ is defined in \eqref{interpol},  and $\bar{\epsilon}$ is arbitrary close to 0.
\end{prop}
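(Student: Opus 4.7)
The plan is to test the error identity \eqref{e-e} with $\phi = 2e_j$, sum from $j=1$ to $m \leq k$, and use the polarization $2(e_j - e_{j-1}, e_j) = |e_j|_{\LL^2}^2 - |e_{j-1}|_{\LL^2}^2 + |e_j - e_{j-1}|_{\LL^2}^2$ so that the discrete energy telescopes into $|e_m|_{\LL^2}^2 + \sum_{j=1}^m |e_j - e_{j-1}|_{\LL^2}^2$ on the left. Substituting $\nabla u(s) - \nabla u_N(t_j) = [\nabla u(s) - \nabla u(t_j)] + \nabla e_j$ in the viscous contribution yields the dissipation $2\nu \frac{T}{N}\sum_j |\nabla e_j|_{\LL^2}^2$ together with a time-regularity error, which is handled by Cauchy--Schwarz, Young's inequality and the H\"older estimate \eqref{HolderV} and contributes a factor $(T/N)^\eta$ after summation.

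The heart of the argument is the nonlinear term. I would split $B(u(s)) - B(u_N(t_j)) = [B(u(s)) - B(u(t_j))] + [B(u(t_j)) - B(u_N(t_j))]$ and treat the first summand as a time-regularity error via \eqref{HolderL4}, \eqref{majB-X} and Lemma~\ref{reg_scheme}. For the second, bilinearity gives $B(u(t_j)) - B(u_N(t_j)) = B(e_j, u(t_j)) + B(u_N(t_j), e_j)$, and the antisymmetry \eqref{B} kills $\langle B(u_N(t_j), e_j), e_j\rangle = 0$. Only $\langle B(e_j, u(t_j)), e_j\rangle = -b(e_j, e_j, u(t_j))$ survives, and this is the crucial gain, since it involves only the true solution $u$ and not the approximation $u_N$. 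The interpolation \eqref{interpol} yields
\[
|b(e_j, e_j, u(t_j))| \leq \|e_j\|_X^2\, |\nabla u(t_j)|_{\LL^2} \leq \bar{C}\, |e_j|_{\LL^2}\, |\nabla e_j|_{\LL^2}\, |\nabla u(t_j)|_{\LL^2},
\]
and Young's inequality, tuned so that $(1-\bar{\epsilon})\cdot 2\nu\frac{T}{N}|\nabla e_j|_{\LL^2}^2$ is absorbed into the dissipation, leaves exactly the coefficient $(1+\bar{\epsilon})\bar{C}^2/(2\nu)$ in front of $\frac{T}{N}|e_j|_{\LL^2}^2 |\nabla u(t_j)|_{\LL^2}^2$. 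On $\Omega_{k-1}^M$ this quantity is controlled by $\tfrac{(1+\bar{\epsilon})\bar{C}^2}{2\nu} M \cdot \tfrac{T}{N}|e_j|_{\LL^2}^2$, matching the definition of $C_1(M)$ in \eqref{def_C1_Euler}.

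The stochastic contribution is treated by writing $e_j = e_{j-1} + (e_j - e_{j-1})$; pairing with the predictable increment $\int_{t_{j-1}}^{t_j} [G(u(s))-G(u_N(t_{j-1}))] dW(s)$, the $e_{j-1}$-part forms a martingale whose quadratic variation is bounded through \eqref{LipG} and the splitting $u(s) - u_N(t_{j-1}) = [u(s) - u(t_{j-1})] + e_{j-1}$, producing a $(T/N)^{2\eta}$ error via \eqref{HolderL4} plus a $C(\bar{\epsilon}) L_1$ contribution into the Gronwall constant. The cross term $(e_j - e_{j-1}, \int dW)$ is absorbed by Young against the telescoped sum $\sum_j |e_j - e_{j-1}|_{\LL^2}^2$ sitting on the left. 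Applying the Burkholder--Davis--Gundy inequality, multiplying the resulting pathwise inequality by the $\mathcal{F}_{t_{k-1}}$-measurable indicator $1_{\Omega_{k-1}^M}$, taking expectation and the maximum over $j\leq k$, and invoking the discrete Gronwall lemma with coefficient $C_1(M)\, T/N$ produces \eqref{loc_moments}.

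The main obstacle lies in ensuring that every dangerous nonlinear contribution reduces, after antisymmetry, to a term in which the $V$-norm of the true solution is evaluated at a grid point controlled by $\Omega_{k-1}^M$. When the Gronwall sum runs up to $j = k$, the factor $|\nabla u(t_k)|_{\LL^2}^2$ at the last step is not directly bounded by $M$ on $\Omega_{k-1}^M$; I would circumvent this by rewriting $u(t_k) = u(t_{k-1}) + [u(t_k) - u(t_{k-1})]$ in the critical estimate and absorbing the extra time-regularity contribution into the $(T/N)^\eta$ error via \eqref{HolderV}, together with the assumption that $N$ be large enough so that $C_1(M)\,T/N$ is strictly less than $1$, allowing the boundary term to be moved to the left-hand side. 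This systematic avoidance of any $u_N$-dependence in the localization is the decisive difference with \cite{CarPro} that upgrades convergence-in-probability to strong $L^2(\Omega)$ convergence.
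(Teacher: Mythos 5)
Your overall route is the paper's: the same antisymmetry-based reduction of the bilinear term to $\langle B(e_j,u(t_j)),e_j\rangle$ (so that the localization involves only $u$ and not $u_N$), the same interpolation/Young step producing the coefficient $(1+\bar{\epsilon})\bar{C}^2/(2\nu)$ in front of $|e_j|_{\LL^2}^2\,|\nabla u(t_j)|_{\LL^2}^2$, the same per-step time-regularity errors of order $(T/N)^{1+\eta}$ summing to $(T/N)^\eta$, the same splitting $e_j=e_{j-1}+(e_j-e_{j-1})$ against the stochastic increment, and the same discrete Gronwall closure for $N$ large. Two points, however, need attention.

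First, the index mismatch between $|\nabla u(t_j)|_{\LL^2}^2$ and the localization set is not confined to the last step $j=k$: since the stochastic integral forces the per-step indicator to be the $\mathcal{F}_{t_{j-1}}$-measurable $1_{\Omega_{j-1}^M}$, at \emph{every} $j$ the factor $|\nabla u(t_j)|_{\LL^2}^2$ sits one step ahead of the set that controls it; the paper handles this uniformly in $j$ by $|\nabla u(t_j)|^2_{\LL^2}\le(1+\epsilon)|\nabla u(t_{j-1})|^2_{\LL^2}+(1+\epsilon^{-1})|\nabla[u(t_j)-u(t_{j-1})]|^2_{\LL^2}$ together with \eqref{HolderV} and \eqref{error_localization} --- the fix you describe, but applied at each step rather than only the terminal one. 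Second, and this is the step that would fail as written: you telescope first and only then multiply the pathwise inequality by the single indicator $1_{\Omega_{k-1}^M}$. With that order the martingale part $\sum_{j\le m}\big(e_{j-1},\int_{t_{j-1}}^{t_j}[G(u(s))-G(u_N(t_{j-1}))]\,dW(s)\big)$, once multiplied by $1_{\Omega_{k-1}^M}$, is no longer a martingale transform with predictable integrands (the set $\Omega_{k-1}^M$ is not $\mathcal{F}_{t_{j-1}}$-measurable for $j<k$), so Burkholder--Davis--Gundy does not deliver a \emph{localized} quadratic variation; the resulting unlocalized $\EE\big(|e_{j-1}|_{\LL^2}^2\big)$ cannot be fed back into a Gronwall loop whose unknown is $\EE\big(1_{\Omega_{j-1}^M}|e_j|_{\LL^2}^2\big)$. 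The repair is precisely the paper's device: insert the predictable indicators $1_{\Omega_{j-1}^M}$ inside each increment before summing --- legitimate on $\Omega_{k-1}^M$ because $\Omega_{k-1}^M\subset\Omega_{j-1}^M$ for $j\le k$ --- and recover $\max_j 1_{\Omega_{j-1}^M}|e_j|_{\LL^2}^2$ from the sum of localized increments via the monotonicity inequality \eqref{maj_e-e}, then argue as in \eqref{Maj_M2}--\eqref{Maj_M1}. With that modification your argument coincides with the paper's proof.
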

\begin{proof}
We  follow the scheme of the arguments  in \cite{CarPro}, pages 2480-2484, but the upper estimate of the duality involving the
difference of the bilinear terms is dealt with differently, which leads to a different localization set. Furthermore, in order to describe
the strong speed of convergence of the scheme, we need a more precise control
of various constants appearing in some upper estimates. Hence we give a detailed proof below.\\
{\bf Step 1:  Upper estimates for the bilinear term}\\
 Let us consider the duality between the difference of bilinear terms and $e_{j}$, that is the  upper estimate of  $\int_{t_{j-1}}^{t_j} \langle
 B(u(s)) - B(u_N(t_{j})) \, , \, e_{j}\big\rangle ds$.  For every $s\in (t_{j-1}, t_j]$, using the bilinearity of $B$ 
and the antisymmetry property  \eqref{B}, 
we deduce 
\begin{equation} \label{dif_B-B}
  \big\langle B\big( u(s), u(s) \big) -  B\big( u_N(t_j), u_N(t_j)\big)  \, , \, e_j \big\rangle = \sum_{i=1}^3 T_i(s),
  \end{equation}
where, since  $\big\langle  B\big( v,  u_N(t_j)  \big)  \, , \, e_j \big\rangle = \big\langle  B\big( v,  u(t_j)  \big)  \, , \, e_j \big\rangle$ for every $v\in V$,
\begin{align*}
T_1(s):= & \big\langle  B\big( e_j ,  u_N(t_j)  \big)  \, , \, e_j \big\rangle = \big\langle B\big(e_j , u(t_j)\big)  \, , \, e_j \big\rangle,\\ 
T_2(s):= &\big\langle  B\big(  u(s) -u(t_j) , u(t_j) \big) \, , \, e_j \big\rangle, \\
T_3(s):= & \big\langle B\big(u(s) , u(s)-u(t_j)\big) \, , \, e_j \big\rangle = - \big\langle B\big(u(s) , e_j\big) \, , \, u(s)-u(t_j) \big\rangle.
\end{align*}

Note that, unlike the first formulation of $T_1(s)$, the second  one only depends on the error and on the solution to \eqref{2D-NS}, 
and not on the approximation scheme. The H\"older  inequality and \eqref{interpol} yield  
 for every $\delta_1 >0$ and $\bar{C}$ defined in the interpolation
inequality \eqref{interpol} 
\begin{align*} 				 
 \int_{t_{j-1}}^{t_j} | T_1(s) |\, ds &\leq \bar{C}  \frac{T}{N} \,  |  e_j|_{\LL^2}  \, |\nabla e_j|_{\LL^2} \,   | \nabla u(t_j)|_{\LL^2}  \\ 
& \leq \delta_1 \nu \frac{T}{N} \, |\nabla e_j|_{\LL^2}^2  + \frac{\bar{C}^2}{4\delta_1 \nu} \, \frac{T}{N} \,  |e_j|_{\LL^2}^2 \, |\nabla u(t_j)|_{\LL^2}^2, 
\end{align*}
where the last upper estimate follows from the Young inequality (with conjugate exponents 2 and 2).
A similar argument using the H\"older and Young inequalities with exponents  4, 4 and 2 imply that for any $\delta_2 >0$ and $\gamma_2>0$, 
\begin{align*}
|T_2(s)| \leq \delta_2 \nu |\nabla e_j|_{\LL^2}^2 + \gamma_2 |e_j|^2_{\LL^2} + 
 C(\nu, \delta_2, \gamma_2)   \|u(t_j)-u(s)\|_{\LL^4}^2 |\nabla u(t_j)|_{\LL^2}^2 . 
\end{align*}
Using the Cauchy-Schwarz inequality   we deduce 
\begin{align*}			
 \int_{t_{j-1}}^{t_j} \!\! | T_2(s) |\, ds \leq  \;  &\delta_2 \nu  \frac{T}{N}  |\nabla e_j|_{\LL^2}^2  
 + \gamma_2 \frac{T}{N}    |e_j|_{\LL^2}^2  \\ 
& \; +  
 C(\nu, \delta_2, \gamma_2)   |\nabla u(t_j)|_{\LL^2}^2 \int_{t_{j-1}}^{t_j}\!\!  \|u(t_j)-u(s)\|_{\LL^4}^2 ds . 
\end{align*} 
Similar computations using  the H\"older and Young  inequalities imply 
\begin{align}				\label{maj_T3}
\int_{t_{j-1}}^{t_j} | T_3(s) |\, ds \leq & \; \delta_3 \nu \frac{T}{N} |\nabla e_j|_{\LL^2}^2  
+ \frac{1}{4\nu \delta_3}  \int_{t_{j-1}}^{t_j} \|u(s)\|_{\LL^4}^2 \, \|u(t_j)-u(s)\|_{\LL^4}^2 ds
\end{align}
for any $\delta_3>0$. Note that 
\[ 
 \nu \int_{t_{j-1}}^{t_j} \!\! \big( \nabla(u(s) - u_N(t_j))\, , \, \nabla e_j \big)  ds =  \nu  \frac{T}{N}  |\nabla e_j|_{\LL^2}^2 + \nu  \int_{t_{j-1}}^{t_j} \!\!
\big( \nabla(u(s)-u(t_j)) ,  \nabla e_j\big) ds. 
\]
Using the Cauchy-Schwarz and Young inequalities, we deduce 
\[ \nu \int_{t_{j-1}}^{t_j} \!\!
\big| \big( \nabla(u(s)-u(t_j)) ,  \nabla e_j\big)\big|  ds  \leq 
\delta_0 \nu \frac{T}{N} |\nabla e_j|_{\LL^2}^2 + \frac{\nu }{4\delta_0 } \int_{t_{j-1}}^{t_j} \big|\nabla \big(u(s) - u(t_j) \big)\big|_{\LL^2}^2 ds 
\] 
for any $\delta_0>0$.  Hence using the above upper estimates in \eqref{e-e} with $\phi = e_j$, we deduce
\begin{align}						\label{(e-e,e)}
\big( e_j-e_{j-1}\, , e_j\big) & + \nu \frac{T}{N} |\nabla e_j|_{\LL^2}^2 \leq  \nu \sum_{r=0}^3 \delta_r \frac{T}{N} |\nabla e_j|_{\LL^2}^2 
+ \Big( \gamma_2 + \frac{\bar{C}^2}{4\delta_1 \nu} |\nabla u(t_j)|_{\LL^2}^2 \Big) \, \frac{T}{N} \, |e_j|_{\LL^2}^2  \nonumber \\
& + \sum_{l=1}^3 \tilde{T_j}(l) + \Big( \int_{t_{j-1}}^{t_j} \big[ G(u(s)) - G( u_N(t_{j-1}))\big] dW(s) \, , \, e_j\Big),
\end{align}
where 
\begin{align*}
\tilde{T_j}(1) =  & \frac{\nu }{4 \delta_0 } \int_{t_{j-1}}^{t_j} \big|\nabla \big(u(s) - u(t_j) \big)\big|_{\LL^2}^2 ds, \\
\tilde{T_j}(2) =  &  
 C(\nu, \delta_2, \gamma_2)   \, |\nabla u(t_j)|_{\LL^2}^2 \int_{t_{j-1}}^{t_j} \|u(s)-u(t_j)\|_{\LL^4}^2 ds, \\
\tilde{T_j}(3) =  &  \frac{1}{4\nu \delta_3} \int_{t_{j-1}}^{t_j} \|u(s)\|_{\LL^4}^2  \|u(t_j)-u(s)\|_{\LL^4}^2 ds. 
\end{align*}
Using the time regularity \eqref{HolderV}  with $p=2$, we deduce 
\begin{align} 		\label{bound_ET1}
\EE \big(  \tilde{T_j}(1)\big) & \leq  C  \frac{\nu }{4 \delta_0 } \Big( \frac{T}{N} \Big)^{1+\eta}.
\end{align}
The Cauchy-Schwarz inequality, \eqref{bound_u} with $p=2$ and \eqref{HolderL4} imply
\begin{align} 				\label{bound_ET2}
\EE \big(  \tilde{T_j}(2) \big) &\leq\,  C(\nu, \delta_2, \gamma_2)  \Big(\frac{T}{N}\Big)^{1+2\eta}, \\
\EE \big( \tilde{T_j} (3) \big) &\leq C \, \frac{1}{4\nu \delta_3}  \, \Big(\frac{T}{N}\Big)^{1+2\eta}.  \label{bound_ET3}
\end{align}
\bigskip

\noindent {\bf Step 2: Localization}\\
In order to use a discrete version of the Gronwall lemma to upper estimate $|e_j|_{\LL^2}^2$, due to the factor 
$|\nabla u(t_j)|_{\LL^2}^2$ on   the RHS of \eqref{(e-e,e)}, we have to localize on  the random set $\Omega_{j-1}^M$
defined in \eqref{Omegak}. The shift of index is due to the fact that, in order to deal with the stochastic integral,  we have
to make sure that the localization set is ${\mathcal F}_{t_{j-1}}$-measurable. 
This set  depends on $j$, but we will need to add the localized inequalities \eqref{(e-e,e)} and take expected values.

Note that for $1\leq j\leq k$, $\Omega_k^M \subset \Omega_j^M$.  Hence, since $e_0=0$,  as proved in  \cite{CarPro}, estimate  (3.25), we have 
\begin{align} \label{maj_e-e}
 \max_{1\leq j\leq k} \sum_{l=1}^j 1_{\Omega^M_{l-1}} \Big( |e_l |_{\LL^2}^2
- |e_{l-1}|_{\LL^2}^2 \Big) & = 
\max_{1\leq j\leq k}  \Big( 1_{\Omega^M_{j-1}} |e_j|_{\LL^2}^2 
+ \sum_{l=2}^j  \big( 1_{\Omega_{l-2}^M} - 1_{\Omega_{l-1}^M}\big) |e_{l-1}|_{\LL^2}^2\Big) \nonumber \\
 &  \geq \max_{1\leq j\leq k}1_{\Omega^M_{j-1} } |e_j|_{\LL^2}^2 .
\end{align}
Thus, we will localize $|e_j|_{\LL^2}^2$ on the set $\Omega^M_{j-1} $ and - shifting the index by one -
control some "error term" $|e_j-e_{j-1}|_{\LL^2}^2$ localized on the same set. Note that this
localization set only depends on the projection of the solution $u$ of equation \eqref{2D-NS} on divergence free fields, and not on its approximation. 

Adding the inequalities \eqref{(e-e,e)} with $\phi = e_j$ localized on the set $\Omega_{j-1}^M$,   using  $e_0=0$ and the identity 
$(a,a-b) = \frac{1}{2} \big[ |a|_{\LL^2}^2 - |b|_{\LL^2}^2 + |a-b|_{\LL^2}^2\big]$, we deduce for $k=1, \cdots, N$
\begin{align*}
\max_{1\leq j\leq k} & \Big(  \frac{1}{2}  1_{\Omega^M_{j-1}} |e_j|_{\LL^2}^2 +  
\frac{1}{2} \sum_{l=1}^j 1_{\Omega_{l-1}^M} |e_l-e_{l-1}|_{\LL^2}^2 \Big) \\
& \leq \frac{1}{2} \Big(  \max_{1\leq j\leq k} \sum_{l=1}^j 1_{\Omega^M_{l-1}} \Big( |e_l|_{\LL^2}^2
- |e_{l-1}|_{\LL^2}^2 \Big)  +   \sum_{l=1}^j 1_{\Omega_{l-1}^M} |e_l-e_{l-1}|_{\LL^2}^2  \Big)\\
&\leq  \max_{1\leq j\leq k}  \sum_{1\leq l\leq j}  1_{\Omega^M_{l-1}} \big(e_l-e_{l-1}, e_l \big)   . 
\end{align*}
The upper estimates \eqref{(e-e,e)}  for $j=1, \cdots, k$ imply  for any $\epsilon >0$ 
\begin{align}					\label{maj_max} 
\max_{1\leq j\leq k} &\Big[  \frac{1}{2}  1_{\Omega^M_{j-1}} |e_j|_{\LL^2}^2 +  \sum_{l=1}^j 1_{\Omega_{l-1}^M} |e_l-e_{l-1}|_{\LL^2}^2 
  +   \nu \big(1-\sum_{r=0}^3 \delta_r \big)\, \frac{T}{N} \sum_{l=1}^j 1_{\Omega_{l-1}^M} |\nabla e_l |_{\LL^2}^2  \Big] \nonumber \\
& \leq   \Big[ \gamma_2 + (1+\epsilon)    \frac{\bar{C}^2 \, M}{4\delta_1 \nu}  \Big]  \, \frac{T}{N} \, 
\sum_{j=1}^k 1_{\Omega_{j-1}^M}  |e_{j}|_{\LL^2}^2   
+  \sum_{i=1}^3 \sum_{j=1}^k \tilde{T_j}(i)  \nonumber \\
&+ C(\nu, \delta_1, \epsilon)  \, \frac{T}{N} \,\sum_{j=1}^k 1_{\Omega_{j-1}^M}  |e_j|_{\LL^2}^2 
 |\nabla \big[ u(t_j)- u(t_{j-1}) \big] |_{\LL^2}^2  +M_k(1) + M_k(2),
\end{align}
where
\begin{align*}
M_k(1)= & \sum_{j=1}^k 1_{\Omega_{j-1}^M} \Big(  e_{j-1}\, , \, \int_{t_{j-1}}^{t_j} \big[ G(u(s)) - G(u_N(t_{j-1}))\big] dW(s) \Big), \\
M_k(2)=& \sum_{j=1}^k 1_{\Omega_{j-1}^M} \Big( e_j-e_{j-1} \, , \, \int_{t_{j-1}}^{t_j} \big[ G(u(s)) - G(u_N(t_{j-1}))\big] dW(s)  \Big).
\end{align*}
The inequalities \eqref{bound_ET1}-- \eqref{bound_ET3} imply the existence of a constant $C$ depending on $T$, $\nu$, $\delta_i$, $i=0, \cdots 3$
and $\gamma_2$ such that
\begin{equation}			\label{error_term_bilin}
\sum_{i=1}^3  \sum_{j=1}^N \EE \big( \tilde{T_j}(i) \big)  \leq C \; \Big( \frac{T}{N}\Big)^\eta. 
\end{equation}
The Cauchy-Schwarz inequality, \eqref{bound_u} and  \eqref{moments_scheme} for $p=q=2$, and the time regularity \eqref{HolderV} for $p=4$
imply the existence of a constant $C$ such that 
\begin{equation}			\label{error_localization}
 \frac{T}{N} \,\sum_{j=1}^N \EE  \Big(   |e_j|_{\LL^2}^2 
|\nabla \big[ u(t_j))- u(t_{j-1}) \big] |_{\LL^2}^2 \Big)  \leq C \Big( \frac{T}{N}\Big)^\eta      . 
\end{equation}
We next upper estimate $\EE\big( \max_{1\leq k\leq N} M_k(2)\big)$. The Cauchy-Schwarz inequality, the It\^o isometry and then
the Young inequality   
imply that for any $\tilde{\delta}_2 >0$ 
\begin{align*}			
&  \EE\Big( \max_{1\leq j\leq k} M_j(2) \Big) \leq  
\sum_{j=1}^k \Big\{ \EE  \big(1_{\Omega_{j-1}^M}  \, |e_j - e_{j-1}|_{\LL^2}^2\big) \Big\}^{\frac{1}{2}}  \\ 
&\qquad \qquad  \times  \Big\{ \EE \Big(   1_{\Omega_{j-1}^M }\int_{t_{j-1}}^{t_j} \| G(u(s)) - G( u_N(t_{j-1}) \|_{{\mathcal L}_2(K,H)}^2 ds\Big) 
 \Big\}^{\frac{1}{2}}		\\ 
&\;  \leq   \tilde{\delta}_2  \sum_{j=1}^k    \EE\big(1_{\Omega_{j-1}^M} \, |e_j- e_{j-1}|_{\LL^2}^2 \big) \\
&\qquad \qquad + \frac{1}{4\tilde{\delta}_2} \sum_{j=1}^k  \int_{t_{j-1}}^{t_j}  \!\!
\EE \Big( \!1_{\Omega_{j-1}^M}  \| G(u(s)) - G( u_N(t_{j-1}) )\|_{{\mathcal L}_2(K,H)}^2 \Big) ds .
\end{align*}

 The  Lipschitz condition {\bf (G1)}   
 and  \eqref{HolderV}      imply  for any $\epsilon >0$ and $s\in [t_{l-1}, t_l]$, 
\begin{align} 			\label{majG-G}
\EE\big( 1_{\Omega_{l-1}^M} & \| G(u(s)) - G(u_N(t_{l-1})) \|_{{\mathcal L}_2(K,H)}^2  \big) \nonumber \\
\leq & \; 
(1+\epsilon) \EE \big( 1_{\Omega_{l-1}^M}  \| G(u(t_{l-1})) - G( u_N(t_{l-1}))\|_{{\mathcal L}_2(K,H)}^2 \big)  \nonumber \\
&\;  + \big( 1+\frac{1}{\epsilon}\big) 
\EE\big( 1_{\Omega_{l-1}^M}  \|G( u(s)) - G(u(t_{l-1}))\|_{{\mathcal L}_2(K,H)}^2 \big) \nonumber \\
\leq  &   \;   L_1\, (1+\epsilon) \;  \EE \Big( 1_{\Omega_{l-1}^M} |e_{l-1}|_{\LL^2}^2\Big) +C(\epsilon)   \Big( \frac{T}{N} \Big)^{\eta}. 
\end{align} 
Since $\Omega_j^M \subset \Omega_{j-1}^M$ and $e_0=0$, we deduce  for any $k=2, \cdots, N,$
\begin{align}				\label{Maj_M2} 
 \EE\Big(  \!\max_{1\leq j\leq k} M_j(2) \Big) \leq   &
\tilde{ \delta}_2  \frac{T}{N}  \sum_{j=1}^k    \EE\big(1_{\Omega_{j-1}^M} \, |e_j- e_{j-1}|_{\LL^2}^2 \big) \!+ \!
 \frac{1+\epsilon}{4\tilde{\delta}_2}  L_1\,  \frac{T}{N}\ \sum_{j=1}^{k-1}  \EE \Big( 1_{\Omega_{j-1}^M} |e_{j}|_{\LL^2}^2\Big)  \nonumber \\
 & 
+ C(\epsilon,\tilde{\delta}_2)  T \Big( \frac{T}{N} \Big)^{\eta}. 
\end{align} 
Since $1_{\Omega_{l-1}^M}$ and $e_{l-1}$ are ${\mathcal F}_{t_{l-1}}$-measurable, 
using the Burkholder-Davies-Gundy inequality,  the Young inequality, \eqref{majG-G},  and using once more the inclusion 
$\Omega_j^M \subset \Omega_{j-1}^M$,   we deduce that for any $\tilde{\delta}_1 >0$,
\begin{align}				\label{Maj_M1} 
 \EE\Big(&  \max_{1\leq j\leq k} M_j(1) \Big) \leq 3 \sum_{l=1}^k 
 \EE\Big[ \Big\{ 1_{\Omega_{l-1}^M}  \int_{t_{l-1}}^{t_l} \|G(u(s)) - G( u_N(t_{l-1})) \|_{{\mathcal L}_2(K,H)}^2
\, |e_{l-1}|_{\LL^2}^2   ds  \Big\}^{\frac{1}{2}} \Big]  \nonumber \\
& \leq 3 \EE\Big[ \Big( \max_{1\leq l\leq k} 1_{\Omega_{l-1}^M} |e_{l-1}|_{\LL^2} \Big) \Big\{ \sum_{l=1}^k 1_{\Omega_{l-1}^M} \int_{t_{l-1}}^{t_l}
 \|G(u(s)) - G( u_N(t_{l-1}))\|_{{\mathcal L}_2(K,H)}^2  ds \Big\}^{\frac{1}{2}}\Big]   \nonumber \\ 
& \leq \tilde{\delta}_1 \EE \Big( \max_{1\leq l\leq k} 1_{\Omega_{l-1}^M} |e_{l-1}|_{\LL^2}^2 \Big) + \frac{9(1+\epsilon)}{4\tilde{\delta}_1} 
L_1\;  \frac{T}{N} \sum_{j=1}^{k-1}   \EE \Big( 1_{\Omega_{j-1}^M} |e_{j}|_{\LL^2}^2\Big) 
+ C(\epsilon,\tilde{\delta}_1)   \Big( \frac{T}{N} \Big)^{\eta} .  
\end{align}
Collecting the upper estimates \eqref{maj_max}--\eqref{Maj_M1} and  taking $\tilde{\delta}_2=1$, we deduce  for $k=2, \cdots, N$ 
\begin{align}					\label{maj_preGronwall}
 \EE\Big( &\max_{1\leq j\leq k} \Big\{ \frac{1}{2} 1_{\Omega_{j-1}^M} |e_j|_{\LL^2}^2 \Big) 
 + \nu \Big( 1-\sum_{i=0}^3 \delta_i\Big)  \frac{T}{N} \sum_{l=1}^j 
 1_{\Omega_{l-1}^M} |\nabla e_l|_{\LL^2}^2 \Big\} \Big)  \nonumber \\
& \leq   \Big[ \tilde{\delta}_1 +  \gamma_2 + (1+\epsilon)  \frac{\bar{C}^2 M}{4\delta_1 \nu}  \frac{T}{N} \Big]   
 \EE\Big( \max_{1\leq j\leq k} 1_{\Omega_{j-1}^M} |e_j|_{\LL^2}^2 \Big) \nonumber \\
 &\quad +   \Big[  (1+\epsilon)   \frac{\bar{C}^2 M}{4\delta_1 \nu}  +  \gamma_2 
 + \frac{1+\epsilon}{4}  \Big( \frac{1}{\tilde{\delta}_2} +\frac{9}{\tilde{\delta}_1} \Big) 
 L_1 \Big] \frac{T}{N}  \sum_{j=1}^{k-1} \EE\big(1_{\Omega_{j-1}^M} | e_j|_{\LL^2}^2 \big) + C\Big( \frac{T}{N}\Big)^\eta.  
\end{align}
\smallskip

\noindent {\bf Step 3: Discrete Gronwall lemma}   \\
Fix $\alpha \in (0,1) $ and choose $\delta_1 \in (0, 1-\alpha)$. Then 
\[  \nu(1-\delta_1) 
\geq \alpha \nu.\] 
 Fix  $\tilde{\epsilon}\in (0,1)$ and  $\gamma_2 \in \big( 0, \frac{\tilde{\epsilon}}{2} (\frac{1}{2}-\tilde{\delta}_1) )$;  
 suppose that  $N$ is 
large enough to imply 
\[ \frac{1}{2} - \tilde{\delta}_1 - \Big( \gamma_2 + (1+\epsilon)  \frac{\bar{C}^2 M}{4\delta_1 \nu} \Big)  \frac{T}{N}   \geq 
 (1-\tilde{\epsilon}) \big( \frac{1}{2}-\tilde{\delta}_1\big), \]
 and choose $\delta_i$, $i=0,2,3$ such that $\delta_0+\delta_2+\delta_3<\frac{\alpha}{2} \nu$.  Then for $N$ large enough, 
 
 \begin{align*}
 (1-&\tilde{\epsilon}) \big( \frac{1}{2} - \tilde{\delta}_1 \big) \EE\Big( \max_{1\leq j\leq k} 1_{\Omega_{j-1}^M} |e_j|_{\LL^2}^2 \Big) 
 + \frac{\alpha \nu}{2} \frac{T}{N} \sum_{j=1}^k 
\EE\big( 1_{\Omega_{j-1}^M} |\nabla e_j|_{\LL^2}^2\big) \nonumber \\
& \leq \Big[    (1+\epsilon)  \frac{\bar{C}^2 M}{4\delta_1 \nu}  +  
C(\gamma_2, \tilde{\delta}_1, 
\epsilon) L_1   \Big] \frac{T}{N}  \sum_{j=1}^{k-1} \EE\big(1_{\Omega_{j-1}^M} | e_j|_{\LL^2}^2 \big)  + C\Big(\frac{T}{N}\Big)^\eta. 
 \end{align*}
 Set 
\begin{equation} 		\label{C_1}
 C_1(M):= 
 \frac{    \frac{(1+\epsilon) \bar{C}^2 }{4\delta_1 \nu}  M 
 + (1+\epsilon) C( \gamma_2, \tilde{\delta}_1)}{(1-\tilde{\epsilon} ) \big(\frac{1}{2} - \tilde{\delta}_1\big)}.
\end{equation}  
Neglecting the second term on the left  hand side and using a discrete version of the Gronwall lemma, we deduce that
\[ \EE\Big( \max_{1\leq j\leq k} 1_{\Omega_{j-1}^M} |e_j|_{\LL^2}^2 \Big)  \leq C\,  e^{C_1(M)\, T} \, \Big(\frac{T}{N}\Big)^\eta.\]

Once these inequalities hold, choosing  $\epsilon \sim 0$, 
$\tilde{\delta}_1 \sim 0$, $\gamma_2\sim 0$, $\delta_1\sim 1$,
$\delta_i\sim 0$ for $i=0,2,3$, we may take $C_1(M)$ such that 
\[ 
C_1 (M)= \frac{ (1+\bar{\epsilon}) \, \bar{C}^2M}{  2   \nu} + C(\bar{\epsilon}) L_1, 
\] 
where $\bar{C}$ is the constant defined in \eqref{interpol} and $\bar{\epsilon}>0$ is arbitrary close to 0. 
  Indeed, given $\bar{\epsilon} >0$, we choose the other constants so that  
  $\frac{1+{\epsilon}}{\delta_1 (2-4\tilde{\delta}_1)}\leq \frac{1+\bar{\epsilon}}{2}$. 
Plugging this in the previous upper estimate and using the inclusions $\Omega_k^M\subset \Omega_j^M$ for $j=1, \cdots, k$, we
deduce  \eqref{loc_moments} and \eqref{def_C1_Euler}. 
 
\end{proof} 


\subsection{ Strong speed of convergence of the implicit Euler scheme}	\label{strong_Euler}
As in section \ref{sec_strong_splitting}, let us use the H\"older inequality with conjugate exponents   $2^{q-1}$ and $p=\frac{2^{q-1}}{2^{q-1}-1}$.
  We obtain 
\begin{align}		\label{Holder_L2}			
\EE\Big( 1_{(\Omega_N^M)^c} \max_{1\leq k\leq N} |e_k|_{\LL^2}^2  \Big) \leq &\,  C \, 
\Big[ \PP\big( (\Omega_N^M)^c\big) \Big]^{\frac{1}{p}} \nonumber \\
& \quad \times \Big[
\EE \Big( \sup_{0\leq s\leq T} |u(s)|_{\LL^2}^{2^q} + \max_{0\leq k\leq N} |u_N(t_k)|_{\LL^2}^{2^q} \Big) \Big]^{ \frac{1}{2^{q-1}}  }, \\
\EE\Big( 1_{(\Omega_N^M)^c} \frac{T}{N} \sum_{k=1}^N  | \nabla e_k|_{\LL^2}^2  \Big) \leq & \, 
C \Big[ \PP\big( (\Omega_N^M)^c\big) \Big]^{\frac{1}{p}}\nonumber \\
&\quad  \times \Big[
\EE \Big( \sup_{0\leq s\leq T} |\nabla u(s)|_{\LL^2}^{2^q} + \max_{0\leq k\leq N} |\nabla u_N(t_k)|_{\LL^2}^{2^q} \Big) 
\Big]^{  \frac{1}{2^{q-1}}  }. 
\label{Holder_nabla}
\end{align} 

\noindent The inequalities  \eqref{bound_u} and \eqref{moments_scheme}
 prove that if  $\EE(\|u_0\|_V^{2^q})<\infty$,
the second factors on  the right hand sides of \eqref{Holder_L2} and \eqref{Holder_nabla} 
are bounded by a constant independent of $N$. 

\noindent We now upper estimate the probability of the complement of the localization set and to balance the upper estimates of the $L^2$
moments localized on the set $\Omega_N^M$ and its complement. 
To obtain a strong speed of convergence will require the threshold $M$ to depend on $N$.
Two cases are studied.
\smallskip

\noindent{\bf Case 1: Linear growth diffusion coefficient}

\noindent  Suppose that $G$ satisfies conditions {\bf (G1)} and {\bf (G2)} 
and that $\EE(\|u_0\|^{2^q})<\infty$. 
Then \eqref{bound_u} implies
 \begin{align}			\label{upper_lingrowth}
\PP\Big( \big( \Omega_N^{M(N)}\big)^c\Big) \; \leq & \; \PP\Big( \sup_{0\leq s\leq T} |\nabla u(s)|_{\LL^2}^2 > M(n) \Big) \nonumber  \\
\leq &\; \Big(  \frac{1}{M(N)} \Big)^{2^{q-1}} \; \EE\Big( \sup_{0\leq s\leq T} \|u(s)\|_V^{2^q} \Big) \leq C_q \, M(N)^{-2^{q-1}}. 
\end{align}
If we suppose that $\EE(\|u_0\|_V^{2^q}) <\infty$, in order to balance the upper estimates \eqref{Holder_L2}, \eqref{Holder_nabla} with 
\eqref{upper_lingrowth} and \eqref{loc_moments}, we have to choose $M(N)\to \infty$ as $N\to \infty$,  such that as $N\to \infty$, 
\[ 
\Big( \frac{T}{N} \Big)^\eta \exp[C_1\big(M(N)\big)\, T ] \; \asymp \;  C(q) M(N)^{- 2^{q-1} +1 } .
\] 
where $C_1(M(N))$ is defined in \eqref{def_C1_Euler}. 
 Fix $\bar{\epsilon}>0$; taking logarithms and neglecting constants leads to   
\[  -\eta \ln(N) + \frac{ (1+\bar{\epsilon})  \bar{C}^2 M(N) T}{2  \nu} 
\asymp - \big( 2^{q-1}  -  1 ) \ln(M(N)) +  O(1)  \quad \mbox{\rm as } N\to \infty. \]
Let 
\begin{equation} \label{M(N)-Euler-lingro}
M(N):= \frac{ 2 \nu }{(1+\bar{\epsilon})\bar{C}^2 T} \Big\{ \eta \ln(N) 
-     \big(2^{q-1}  -  1 \big)  \ln\big(  \ln(N) \big) \Big\}
 \asymp \frac{2 \nu \eta \ln(N)}{ (1+\bar{\epsilon}) \bar{C}^2 T} .
\end{equation}

Then,  for this choice of $M(N)$, we have 
\[ -\eta \ln(N) + C_1\big( M(N) \big) T =   -   \ln\big[ \big( \ln(N)\big)^{2^{q-1}- 1}\big]  + O(1), \]
 which implies $\big( \frac{T}{N}\big)^\eta \exp[ C_1(M(N)) T] \asymp C \big( \ln(N)\big)^{-2^{q-1}+1}$ for some positive  constant $C$. 
Furthermore, $M(N)^{-2^{q-1}+1} \asymp C \big( \ln(N)\big)^{-2^{q-1}+1}$ for some positive constant $C$. 
Similar computations with 
 sum of the $V$ norms of the error on the time grid yield 
\[ 
\EE\Big( \max_{1\leq k\leq N} |e_k|_{\LL^2}^2 +\nu \frac{T}{N} \sum_{k=1}^N |\nabla e_k|_{\LL^2}^2 \Big) \leq 
 C\big( \ln(N)\big)^{-(2^{q-1} - 1)}  ,   
\] 
for some constant $C$ depending on $T$, $q$ and the coefficients $K_i$, $i=0,1$.  
This completes the proof of the following
\begin{theorem}		\label{th_Euler_lingrow}
Let $u_0$ be such that $\EE(\|u_0\|_V^{2^q})<\infty$ for some $q\geq 3$, $G$ satisfy assumptions  {\bf(G1)} and {\bf (G2)}. 
Then the fully implicit scheme  
$u_N$ solution of \eqref{full-imp1}  converges in $L^2(\Omega)$ to the solution $u$ of \eqref{2D-NS}. More precisely, for $N$ large enough we have
\begin{align}			\label{speed_lin_growth_Euler}
\EE\Big( \max_{1\leq k \leq N} |u(t_k)-u_N(t_k)|_{\LL^2}^2  + \frac{T}{N} \sum_{k=1}^N \big|\nabla\big[ u(t_k)-u_N(t_k)\big] \big|_{\LL^2}^2 \Big) 
\leq    C \big[\ln(N)\big]^{-(2^{q-1} - 1)}.    
\end{align}
\end{theorem}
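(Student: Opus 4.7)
The plan is to decompose $\Omega = \Omega_N^{M(N)} \cup (\Omega_N^{M(N)})^c$ for a well-chosen threshold $M(N) \to \infty$, and to balance the two resulting contributions to $\EE\big( \max_{1\leq k\leq N} |e_k|_{\LL^2}^2 + \frac{T}{N}\sum_{k=1}^N |\nabla e_k|_{\LL^2}^2\big)$. The localized contribution on $\Omega_N^{M(N)}$ is already supplied by Proposition \ref{Loc_cv_Euler}, which controls the error by $C(T/N)^\eta \exp[C_1(M(N))T]$, where $C_1(M)$ grows linearly in $M$ according to \eqref{def_C1_Euler}. The complement contribution is handled through the H\"older inequalities \eqref{Holder_L2} and \eqref{Holder_nabla}, and the task is then to combine these two estimates and optimize over $M(N)$.

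First I would apply Proposition \ref{Loc_cv_Euler} with the threshold $M=M(N)$ chosen later, noting that $\Omega_{k-1}^{M(N)} \supset \Omega_{N-1}^{M(N)}$ for every $k\leq N$, so the localized moment bound \eqref{loc_moments} yields control of the maximum over $k$ and of the discrete $V$-norm sum over the set $\Omega_{N-1}^{M(N)}$. Second, I would bound $\PP\big((\Omega_N^{M(N)})^c\big)$ by Markov's inequality applied to $\sup_{s\in [0,T]}\|u(s)\|_V^{2^q}$, which is finite under the moment assumption on $u_0$ by \eqref{bound_u}, yielding the estimate $\PP\big((\Omega_N^{M(N)})^c\big) \leq C M(N)^{-2^{q-1}}$ displayed in \eqref{upper_lingrowth}. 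Plugging this into \eqref{Holder_L2} and \eqref{Holder_nabla} with conjugate exponents $p=2^{q-1}/(2^{q-1}-1)$ and $2^{q-1}$, and using \eqref{bound_u} together with the moment bound \eqref{moments_scheme} for the scheme, the contribution on $(\Omega_N^{M(N)})^c$ is bounded by $C M(N)^{-(2^{q-1}-1)}$.

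The two contributions are then balanced by requiring
\begin{equation*}
(T/N)^\eta \exp[C_1(M(N))T] \asymp M(N)^{-(2^{q-1}-1)}.
\end{equation*}
Taking logarithms and using the linear dependence of $C_1(M)$ on $M$ from \eqref{def_C1_Euler}, this reduces to a linear equation for $M(N)$ in $\ln(N)$, and solving gives the choice \eqref{M(N)-Euler-lingro}, namely $M(N) \asymp \frac{2\nu\eta \ln(N)}{(1+\bar{\epsilon})\bar{C}^2 T}$, after absorbing a lower-order $\ln\ln(N)$ correction. Substituting back shows that both the localized and the complement contributions are of order $[\ln(N)]^{-(2^{q-1}-1)}$, which establishes \eqref{speed_lin_growth_Euler}. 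The condition $q\geq 3$ is dictated by the requirement $\EE(\|u_0\|_V^8)<\infty$ in Proposition \ref{Loc_cv_Euler} together with the need to have $2^{q-1}-1 \geq 3$ so that the H\"older exponents are admissible and the resulting rate is nontrivial.

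The main obstacle is not analytic but combinatorial in nature: getting the exponent $2^{q-1}-1$ sharp requires carefully matching the polynomial decay of $\PP((\Omega_N^{M(N)})^c)$ (which scales like $M(N)^{-2^{q-1}}$) against the exponential growth $\exp[C_1(M(N))T]$ on the localized set, and the delicate point is to ensure, by choosing $\delta_1$ close to $1$, $\tilde{\delta}_1,\gamma_2,\epsilon$ close to $0$ etc.\ in Proposition \ref{Loc_cv_Euler}, that the coefficient of $M$ in $C_1(M)$ is as close as possible to $\bar{C}^2/(2\nu)$, since this coefficient enters the rate through the logarithmic balance. Once this is done, the only other point to verify is that the bound \eqref{loc_moments} is indeed usable with the non-random threshold $M(N)$ rather than a fixed $M$, but this is immediate since the proposition holds uniformly for all $M>0$ and for $N$ large enough (relative to $M$), and our choice of $M(N)=O(\ln N)$ ensures the required regime is reached for $N$ large.
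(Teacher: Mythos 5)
Your proposal is correct and follows essentially the same route as the paper: localize via Proposition \ref{Loc_cv_Euler} on $\Omega_N^{M(N)}$, control the complement by the H\"older estimates \eqref{Holder_L2}--\eqref{Holder_nabla} combined with the Markov bound \eqref{upper_lingrowth} and the uniform moment bounds \eqref{bound_u} and \eqref{moments_scheme}, then balance the two contributions with the logarithmic choice of $M(N)$ in \eqref{M(N)-Euler-lingro}. The only minor imprecision is your reading of the hypothesis $q\geq 3$: it is forced solely by the requirement $\EE(\|u_0\|_V^8)<\infty$ in Proposition \ref{Loc_cv_Euler} (i.e.\ $2^q\geq 8$), not by any admissibility constraint on the H\"older exponents, which work for any $q\geq 2$.
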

\begin{remark}
Note that,  as for the splitting scheme, 
 if $u_0$ is a deterministic element of $V$ and $G$ satisfies the conditions {\bf (G1)} and {\bf (G2)}, 
  we have
\[ \EE\Big( \max_{1\leq k \leq N} |u(t_k)-u_N(t_k)|_{\LL^2}^2  + \frac{T}{N} \sum_{k=1}^N \big|\nabla\big[ u(t_k)-u_N(t_k)\big] \big|_{\LL^2}^2 \Big) 
\leq  C \big[\ln(N)\big]^{-\gamma}\]
 for any $\gamma >0$. This upper estimate is also true if $\|u_0\|_V$ has moments of all orders, for example if $u_0$ is a $V$-valued
 Gaussian random variable independent of the noise $W$.
\end{remark}

\smallskip 

\noindent{\bf Case 2: Additive noise} Suppose that $G(u):=G \in {\mathcal L}_2(K,V)$, that is the noise is additive, or more generally
that the conditions {\bf (G1)} and {\bf (G2)} are satisfied with  $K_1=0$ 
Using an exponential Markov inequality, we deduce that for any constant $\alpha >0$
\begin{align}				\label{upper_Exp}
\PP\Big( \big( \Omega_N^{M(N)}\big)^c\Big) \; \leq & \exp\big(-\alpha M(N)\big) \; 
 \EE \Big[ \exp\Big(\alpha \sup_{0\leq t\leq T} |\nabla u(t)|_{\LL^2}^2\Big) \Big].
\end{align}
Recall that Lemma \ref{exp_mom} implies that for $\alpha \in (0, \alpha_0]$, where $\alpha_0=\frac{\nu}{4 K_0 \tilde{C}}$ and $\tilde{C}$
is  defined in \eqref{Poincare}, we have $\EE\big[ \sup_{t\in [0,T]} \exp(\alpha \|u(t)\|_V^2) \big]<\infty$. 
Using \eqref{loc_moments} with \eqref{def_C1_Euler}, \eqref{upper_Exp} and \eqref{moments_exp},  
we choose $M(N)$ such that 
\begin{equation} 			\label{ln_add_Euler}
  \Big(\frac{T}{N}\Big)^\eta \exp\Big( \frac{ (1+\bar{\epsilon})   \; \bar{C}^2 M(N)T}{  2 } \nu \Big) 
  = c_2\exp\Big(- \frac{\nu  M(N) }{p 4 K_0  \tilde{C} }  \Big),
\end{equation} 
for some $p>1$, $\bar{\epsilon}>0$, and  some positive constant $c_2$,
 where $\bar{C}$ (resp. $\tilde{C}$) is defined by \eqref{interpol} (resp. \eqref{Poincare}). 
  For any $p \in (1,\infty)$  since $u_0$ is deterministic, 
$\EE(\|u_0\|_V^q)<\infty$ for conjugate exponents $p$ and $q$.  Set
 \[ M(N):= \frac{\eta \ln(N)}{\frac{\nu}{p4K_0 \tilde{C} }+ \frac{(1+\bar{\epsilon}) \bar{C}^2 T}{2\nu}}\] 
 for some $\bar{\epsilon}>0$.  
Then  $M(N)\to \infty$ as $n\to \infty$, and both hand sides of \eqref{ln_add_Euler} 
are equal to some constant multiple of $N^{-\beta \eta}$, where, choosing $p$ close enough
to 1 and $\bar{\epsilon} \sim 0$,  we have 
$\beta <\frac{\frac{\nu}{4K_0\tilde{C}}}{\frac{\nu}{4K_0\tilde{C}} + \frac{\bar{C}^2 T}{ 2  \nu}}$.  
 Since $\eta<\frac{1}{2}$ can be chosen as close to $\frac{1}{2}$ as wanted, this yields the following rate of convergence. 
\begin{theorem}  \label{th_Euler_exp}
Let $u_0\in V$, $G$ satisfy assumptions {\bf (G1)} and {\bf (G2)} with $K_1=L_1=0$. 
Let $u$ denote the solution of
\eqref{2D-NS} and $u_N$ be the fully implicit scheme solution of \eqref{full-imp1}. Then for $N$ large enough, 
$\bar{C}$ (resp.  $\tilde{C}$) defined by \eqref{interpol} (resp. \eqref{Poincare}), 
\begin{equation}			\label{speed_add_Euler}
\EE\Big( \max_{1\leq k\leq N} |u(t_k)-u_N(t_k)|_{\LL^2}^2 + \frac{T}{N} \sum_{k=1}^N \big| \nabla \big[ u(t_k)-u_N(t_k) \big] \big|_{\LL^2}^2\Big)
\leq  C \Big( \frac{T}{N}\Big)^{\gamma}, 
\end{equation}
where  $\gamma <\frac{1}{2} \left(\frac{\frac{\nu}{4 K_0\tilde{C}}}{\frac{\nu}{4K_0\tilde{C}} + \frac{\bar{C}^2 T}{2\nu}}\right)$. 
\end{theorem}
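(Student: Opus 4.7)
The plan is to reuse the decomposition
\[
\EE\Big(\max_{1\leq k\leq N}|e_k|_{\LL^2}^2\Big) = \EE\Big(1_{\Omega_N^M}\max_{k} |e_k|_{\LL^2}^2\Big) + \EE\Big(1_{(\Omega_N^M)^c}\max_{k}|e_k|_{\LL^2}^2\Big)
\]
introduced in the proof of Theorem \ref{th_Euler_lingrow}, but to exploit the fact that when $K_1=0$ Lemma \ref{exp_mom} provides an exponential moment of $\sup_{t\in[0,T]}\|u(t)\|_V^2$. This will replace the polynomial Markov bound \eqref{upper_lingrowth} by an exponential one and upgrade the logarithmic rate to a polynomial one. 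The threshold $M=M(N)$ will then be tuned to balance the two contributions.

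\textbf{Bound on the good set.} First I would invoke Proposition \ref{Loc_cv_Euler} to obtain
\[
\EE\Big(1_{\Omega_{N-1}^M} \max_{1\leq k\leq N}|e_k|_{\LL^2}^2 + \nu\, 1_{\Omega_{N-1}^M}\,\frac{T}{N}\sum_{k=1}^N|\nabla e_k|_{\LL^2}^2\Big) \leq C\exp[C_1(M)T]\Big(\frac{T}{N}\Big)^\eta,
\]
with $C_1(M) = \tfrac{(1+\bar\epsilon)\bar C^2}{2\nu}M + C(\bar\epsilon)L_1$. Since $L_1=0$ by hypothesis, the additive term $C(\bar\epsilon) L_1$ drops out, so the exponent is linear in $M$ with sharp slope $(1+\bar\epsilon)\bar C^2 T/(2\nu)$.

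\textbf{Bound on the bad set.} On $(\Omega_N^M)^c$ I would apply the Hölder inequalities \eqref{Holder_L2}--\eqref{Holder_nabla} with conjugate exponents $p>1$ close to $1$ and $q=p/(p-1)$. Because $u_0\in V$ is deterministic, Theorem \ref{strong_wp} and Lemma \ref{reg_scheme} give $V$-moments of $u$ and $u_N$ of every order, so the second factors in those inequalities are uniformly bounded in $N$. The key input is then the exponential Markov bound \eqref{upper_Exp} together with Lemma \ref{exp_mom}: for $\alpha_0 = \nu/(4K_0\tilde C)$,
\[
\PP\big((\Omega_N^M)^c\big) \leq C\exp(-\alpha_0 M),
\]
so the bad-set contribution is of order $\exp(-\alpha_0 M/p)$.

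\textbf{Balancing and main obstacle.} It remains to choose $M=M(N)$ so the two contributions match, that is,
\[
\Big(\frac{T}{N}\Big)^\eta \exp\Big(\tfrac{(1+\bar\epsilon)\bar C^2 T}{2\nu}M(N)\Big) \asymp \exp\Big(-\tfrac{\alpha_0}{p}M(N)\Big).
\]
Taking logarithms yields a linear equation in $M(N)$ with solution
\[
M(N) = \frac{\eta\ln N}{\tfrac{\alpha_0}{p} + \tfrac{(1+\bar\epsilon)\bar C^2 T}{2\nu}},
\]
and both sides then equal a constant times $N^{-\gamma}$ with $\gamma = \eta\alpha_0\big/\big(\alpha_0 + p(1+\bar\epsilon)\bar C^2 T/(2\nu)\big)$. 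Sending $\eta\nearrow 1/2$ (permitted by \eqref{HolderV}), $p\searrow 1$ and $\bar\epsilon\searrow 0$ produces exactly the claimed range for $\gamma$. The same balance applied to \eqref{Holder_nabla} controls the gradient term. The main difficulty is precisely that both exponents are linear in $M$: no choice of $M(N)$ can yield a rate better than polynomial, and the ceiling on $\gamma$ is set by the ratio of these two linear coefficients, which is why the explicit tracking of the constant $C_1(M)$ in Proposition \ref{Loc_cv_Euler} (in particular the sharp coefficient $\bar C^2/(2\nu)$ in front of $M$) is crucial.
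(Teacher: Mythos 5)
Your proposal is correct and follows essentially the same route as the paper: Proposition \ref{Loc_cv_Euler} on the localization set, the H\"older inequalities \eqref{Holder_L2}--\eqref{Holder_nabla} combined with the exponential Markov inequality and Lemma \ref{exp_mom} on its complement, and then the same linear balancing equation for $M(N)\asymp \eta\ln N\big/\big(\alpha_0/p+(1+\bar\epsilon)\bar C^2T/(2\nu)\big)$ yielding the stated exponent as $p\searrow1$, $\bar\epsilon\searrow0$, $\eta\nearrow\tfrac12$. The only cosmetic difference is that the paper parameterizes the conjugate exponents dyadically (via $2^{q-1}$) to match the moment bounds of Lemma \ref{reg_scheme}, which is immaterial to the argument.
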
 

Note that if $\nu$ is large, the speed of convergence of the $H$ and $V$ norms in Theorem \ref{th_Euler_exp} is
  "close" to $C(T) N^{-\frac{1}{2}}$.   
Intuitively, it cannot be better because of the stochastic integral
and the scaling between the time and space parameters in the heat kernel, which is behind the time regularity of the solution stated in 
\eqref{HolderV}.  
\bigskip

 \subsection{Semi-implicit Euler scheme} In this section, we prove the strong $L^2(\Omega)$ convergence of a discretization scheme with
 a linearized drift. Let $v_N$ be defined on the time grid $(t_k, k=0, \cdots, N)$ as follows.\\
 {\bf Semi implicit Euler scheme} Let $u_0$ be a $V$-valued  ${\mathcal F}_0$-measurable random variable and set  $v_N(0)=u_0$. 
  For $k=1, \cdots,
 N$, let $v_N(t)\in V$ be such that $\PP$ a.s. for all $\phi \in V$,
 \begin{align} 			\label{semi-imp}
 \big( v_N(t_k)-v_N(t_{k-1})\, , \, \phi\big) + \frac{T}{N} \Big[ \nu \big( \nabla v_N(t_k)\, , &\, \nabla \phi \big) + \big\langle B(v_N(t_{k-1}) , v_N(t_k))
 \, , \, \phi \big\rangle  \nonumber \\
 &=\big( G(v_N(t_{k-1}) ) \, \Delta_k W\, , \, \phi\big),
 \end{align}
 where $\Delta_k W = W(t_k) - W(t_{k-1})$. 
 
 Note that since in general $\langle B(u,v)\, , \, Av\rangle \neq 0$ for $u,v\in \mbox{\rm Dom}(A)$, the moments of $v_N$ 
 are bounded in a weaker norm than
 that of the fully implicit scheme $u_N$. 
 \begin{lemma}			\label{lem_mom_semi} Let $u_0\in L^{2^q}(\Omega,V)$ for   some integer $q\geq 2$ 
 be ${\mathcal F}_0$-measurable and let $G$ satisfy the condition {\bf(G1)}.   
 Then each random variable $v_N(t_k)$, $k=0, \cdots, N$ is
 ${\mathcal F}_{t_k}$-measurable such that 
 \begin{align}				\label{moments_semi}
 \sup_N \EE\Big( \max_{1\leq k\leq N} |v_N(t_k)|_{\LL^2}^{2^q} + 
 \nu \frac{T}{N} \sum_{k=1}^N |v_N(t_k)|_{\LL^2}^{2^{q-1}} \, \|v_N(t_k)\|_V^2 \Big) \leq C(T,q).
 \end{align}
 \end{lemma}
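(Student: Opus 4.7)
The plan is to exploit the fact that the semi-implicit discretization of the nonlinearity, $B(v_N(t_{k-1}), v_N(t_k))$, tested against the implicit unknown $v_N(t_k)$ itself, vanishes by the antisymmetry relation \eqref{B}. Equation \eqref{semi-imp} then defines $v_N(t_k)$ as the solution of an affine equation whose associated form $\phi \mapsto (\phi, \phi) + \tfrac{T}{N}\nu(\nabla \phi, \nabla\phi) + \tfrac{T}{N}\langle B(v_N(t_{k-1}), \phi), \phi\rangle = (1+\tfrac{T\nu}{N}\|\cdot\|_V^2$-equivalent$)$ is coercive on $V$; the Lax-Milgram lemma yields existence, uniqueness, and $\mathcal F_{t_k}$-measurability of $v_N(t_k)$. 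Choosing $\phi = v_N(t_k)$ in \eqref{semi-imp} and using the identity $(a-b, a) = \tfrac{1}{2}(|a|_{\LL^2}^2 - |b|_{\LL^2}^2 + |a-b|_{\LL^2}^2)$ one obtains the basic energy identity
\begin{align*}
\tfrac12\big(|v_N(t_k)|_{\LL^2}^2 - |v_N(t_{k-1})|_{\LL^2}^2\big) &+ \tfrac12 |v_N(t_k) - v_N(t_{k-1})|_{\LL^2}^2 + \nu \tfrac{T}{N}|\nabla v_N(t_k)|_{\LL^2}^2 \\
&= \big( G(v_N(t_{k-1}))\, \Delta_k W , v_N(t_k)\big).
\end{align*}

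For the base case, decompose $v_N(t_k) = v_N(t_{k-1}) + [v_N(t_k)-v_N(t_{k-1})]$ in the stochastic inner product: the increment term is absorbed by Young's inequality into the dissipative $|v_N(t_k)-v_N(t_{k-1})|_{\LL^2}^2$, leaving an $\mathcal F_{t_{k-1}}$-adapted martingale-increment term $(G(v_N(t_{k-1}))\Delta_k W, v_N(t_{k-1}))$ and a stochastic ``energy'' contribution bounded by $C\|G(v_N(t_{k-1}))\Delta_k W\|^2$. Summing in $k$, taking expectation, using $\EE\|G(v_N(t_{k-1}))\Delta_k W\|_{\LL^2}^2 \leq \tfrac{T}{N}(K_0 + K_1 |v_N(t_{k-1})|_{\LL^2}^2)$ from \textbf{(G1)}, and applying the discrete Gronwall lemma gives the $L^2(\Omega)$ bound. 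The maximum in $k$ is then obtained by Burkholder-Davis-Gundy applied to the martingale $\sum_{k=1}^{\cdot}(G(v_N(t_{k-1}))\Delta_k W, v_N(t_{k-1}))$ and Young.

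The general case $q\geq 2$ is proved by induction on $q$. Multiplying the basic energy identity by $|v_N(t_k)|_{\LL^2}^{2^q-2}$ (or, equivalently, using the elementary identity $a^m - b^m = m\int_0^1 (b + t(a-b))^{m-1}dt\,(a-b)$ with $m=2^{q-1}$ applied to $a_k := |v_N(t_k)|_{\LL^2}^2$) produces the dissipation term $\nu \tfrac{T}{N}|v_N(t_k)|_{\LL^2}^{2^q-2}|\nabla v_N(t_k)|_{\LL^2}^2$ on the left, a martingale increment $|v_N(t_{k-1})|_{\LL^2}^{2^q-2}(G(v_N(t_{k-1}))\Delta_k W, v_N(t_{k-1}))$, a quadratic variation contribution bounded via \textbf{(G1)} by $C \tfrac{T}{N}|v_N(t_{k-1})|_{\LL^2}^{2^q-2}(K_0 + K_1|v_N(t_{k-1})|_{\LL^2}^2)$, and lower-order cross terms of the form $|v_N(t_k)-v_N(t_{k-1})|_{\LL^2}^{2\ell}$ for $\ell < 2^{q-1}$. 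After taking expectation, a discrete Gronwall argument yields the $\sup_k \EE|v_N(t_k)|_{\LL^2}^{2^q}$ bound, and a final BDG at exponent $2^{q-1}$ together with the inductive hypothesis at level $2^{q-1}$ controls the maximum and upgrades to \eqref{moments_semi}. The main obstacle is the careful bookkeeping of these lower-order cross-terms produced by the polarization of powers and the stochastic increments, handled in the same spirit as step 4 of the proof of Lemma 3.1 in \cite{BrCaPr}; notice, however, that since $\langle B(u,v), Av\rangle \neq 0$ in general, one cannot upgrade the estimate to the $V$-norm as in Lemma~\ref{reg_scheme} for the fully implicit scheme.
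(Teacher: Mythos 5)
The paper does not actually prove this lemma: it is stated without proof as the semi-implicit counterpart of Lemma~3.1 in \cite{CarPro} (compare Lemma~\ref{reg_scheme}, which is likewise recalled from \cite{CarPro}/\cite{BrCaPr}). So there is no internal proof to compare against; judged on its own, your sketch reconstructs the standard argument correctly. The two load-bearing observations are right: (i) the scheme is \emph{linear} in the unknown $v_N(t_k)$ and the form $\psi\mapsto(\psi,\phi)+\tfrac{T\nu}{N}(\nabla\psi,\nabla\phi)+\tfrac{T}{N}\langle B(v_N(t_{k-1}),\psi),\phi\rangle$ is bounded and coercive on $V$ by \eqref{B} and \eqref{majB-X}, so Lax--Milgram gives existence, uniqueness and $\mathcal F_{t_k}$-measurability (this is actually cleaner than for the fully implicit scheme, where one needs a fixed-point argument); (ii) testing with $v_N(t_k)$ kills the nonlinearity, and splitting the noise term as $(G(v_N(t_{k-1}))\Delta_kW, v_N(t_{k-1}))+(G(v_N(t_{k-1}))\Delta_kW, v_N(t_k)-v_N(t_{k-1}))$ produces a genuine martingale increment plus a term absorbed by the $|v_N(t_k)-v_N(t_{k-1})|_{\LL^2}^2$ dissipation, after which {\bf (G1)}, discrete Gronwall and BDG close the $L^2$ and max estimates. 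Your closing remark about $\langle B(u,v),Av\rangle\neq 0$ matches the paper's own comment preceding the lemma. The only place where you are thinner than a complete proof is the higher-moment induction: multiplying the energy identity by $|v_N(t_k)|_{\LL^2}^{2^q-2}$ requires both the convexity inequality $a_k^{m-1}(a_k-a_{k-1})\geq\tfrac1m(a_k^m-a_{k-1}^m)$ and the replacement of the non-adapted weight $|v_N(t_k)|_{\LL^2}^{2m-2}$ by $|v_N(t_{k-1})|_{\LL^2}^{2m-2}$ before the martingale cancellation can be used, which is precisely what generates the lower-order cross terms and forces the dyadic bookkeeping of step~4 of Lemma~3.1 in \cite{BrCaPr}; you flag this honestly and point to the right reference, so I regard it as an acceptable omission in a sketch rather than a gap. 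Note also that your argument naturally yields the dissipation weight $|v_N(t_k)|_{\LL^2}^{2^q-2}$, which dominates the weaker exponent $2^{q-1}$ appearing in \eqref{moments_semi} (up to the max term), so the stated bound follows.
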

 For $k=0, \cdots, N$, set $\bar{e}_k=u(t_k)-v_N(t_k)$. Unlike \cite{CarPro}, we will not compare the schemes $u_N$ and $v_N$ since the
 norm of the difference would require a localization in terms of the gradient of $u_N$.
  Instead of that, we prove the following analog of Proposition \ref{Loc_cv_Euler}.
 \begin{prop}			\label{Loc_semi}
 Let $G$ satisfy the  growth and Lipschitz conditions {\bf (G1)} and {\bf (G2)},  
and $u_0$ be ${\mathcal F}_0$-measurable such that $\EE(\|u_0\|_V^8)<\infty$. 
 Then for $\Omega^M_k$ defined by \eqref{Omegak} and $N$ large enough, we have for $k=1, \cdots, N$ and  $\eta <\frac{1}{2}$
 \begin{align}		\label{loc_semi}
 \EE\Big( 1_{\Omega^M_{k-1}} \Big[ \max_{1\leq j\leq k}  |\bar{e}_j|_{\LL^2}^2 
 + \nu \frac{T}{N} \sum_{j=1}^k |\nabla \bar{e}_j|_{\LL^2}^2\Big] 
 \Big) \leq C \Big( \frac{T}{N}\Big)^\eta \exp\big[ C_1(M) T \big],
 \end{align}
 where $C>0$ is some constant and $C_1(M)$ is defined by \eqref{def_C1_Euler}  for any $\bar{\epsilon}>0$. 
 \end{prop}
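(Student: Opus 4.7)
The plan is to mimic the proof of Proposition \ref{Loc_cv_Euler}, testing the error equation for $\bar e_j = u(t_j)-v_N(t_j)$ against $\bar e_j$ itself and summing over $j$, but the decomposition of the bilinear difference has to be arranged so as to exploit the antisymmetry relation \eqref{B} that is natural for the semi\-implicit scheme \eqref{semi-imp}. Specifically, for $s\in (t_{j-1},t_j]$, I would write
\begin{align*}
 B(u(s),u(s))-B(v_N(t_{j-1}),v_N(t_j)) =\;& B(\bar e_{j-1},u(t_j)) + B(v_N(t_{j-1}),\bar e_j) \\
 & + B(u(s)-u(t_{j-1}),u(s)) + B(u(t_{j-1}),u(s)-u(t_j)).
\end{align*}
The middle term $\langle B(v_N(t_{j-1}),\bar e_j),\bar e_j\rangle$ vanishes by \eqref{B}; this is the point that makes the semi\-implicit scheme amenable to the same energy method used in the fully implicit case. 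The last two brackets are pure time\-regularity errors whose expected contributions, after a H\"older/Young step, are controlled by $(T/N)^\eta$ using \eqref{HolderV} and \eqref{HolderL4}, exactly as the terms $\tilde T_j(i)$ in Step~1 of the proof of Proposition~\ref{Loc_cv_Euler}.

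The new item is the first bracket $\langle B(\bar e_{j-1},u(t_j)),\bar e_j\rangle$. I would estimate it by the Ladyzhenskaya bound
\[
\big|\langle B(\bar e_{j-1},u(t_j)),\bar e_j\rangle\big| \leq \bar C\, |\bar e_{j-1}|_{\LL^2}^{1/2}|\nabla\bar e_{j-1}|_{\LL^2}^{1/2}\,|\nabla u(t_j)|_{\LL^2}\,|\bar e_j|_{\LL^2}^{1/2}|\nabla\bar e_j|_{\LL^2}^{1/2},
\]
and then apply Young's inequality so as to produce (i) a small multiple of $|\nabla\bar e_j|_{\LL^2}^2$ and of $|\nabla\bar e_{j-1}|_{\LL^2}^2$ (absorbable by the viscous term after summation, since shifting the index by one costs nothing), and (ii) a contribution of the form $\tfrac{\bar C^2}{4\delta_1\nu}|\nabla u(t_j)|_{\LL^2}^2(|\bar e_{j-1}|_{\LL^2}^2+|\bar e_j|_{\LL^2}^2)$. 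On the localization set $\Omega^M_{j-1}$ the factor $|\nabla u(t_j)|_{\LL^2}^2$ is bounded by $M$, which is precisely the point where the same constant $C_1(M)$ from \eqref{def_C1_Euler} emerges.

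The remaining steps go as in Proposition~\ref{Loc_cv_Euler}. I would multiply the time\-wise energy identity by $1_{\Omega^M_{j-1}}$, use the identity $(a,a-b)=\tfrac12(|a|^2-|b|^2+|a-b|^2)$, and exploit the inclusion $\Omega^M_k\subset \Omega^M_j$ and the telescoping inequality \eqref{maj_e-e} to pass to the maximum over $1\le j\le k$. The stochastic integral is split into martingale increments $M_k(1)$ and $M_k(2)$ as in \eqref{Maj_M2}--\eqref{Maj_M1}, bounded by the BDG and Young inequalities together with the Lipschitz property \eqref{LipG} and the time regularity \eqref{HolderV}, yielding the same $(T/N)^\eta$ error. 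The integrability $\EE(\|u_0\|_V^8)<\infty$, together with \eqref{bound_u} and \eqref{moments_semi}, guarantees that all error expectations stay bounded uniformly in $N$. A discrete Gronwall lemma applied to the expectation of $\max_{j\le k} 1_{\Omega^M_{j-1}}|\bar e_j|_{\LL^2}^2$ then delivers \eqref{loc_semi} with the same constant $C_1(M)$.

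The main obstacle is the cross term $\langle B(\bar e_{j-1},u(t_j)),\bar e_j\rangle$: the appearance of both $\bar e_{j-1}$ and $\bar e_j$ forces one to produce Young splittings on \emph{two} time levels simultaneously, and one must check that the $|\nabla \bar e_{j-1}|_{\LL^2}^2$ term produced by Young is absorbable after summation (this requires $N$ large enough so that the discrete viscous sum still dominates after the index shift), and that the localization on $\Omega^M_{j-1}$ (rather than $\Omega^M_j$) is compatible with the ${\mathcal F}_{t_{j-1}}$-measurability needed when conditioning the stochastic increment. Once this bookkeeping is done, the same choice of tuning constants used at the end of the proof of Proposition~\ref{Loc_cv_Euler} produces $C_1(M)$ as in \eqref{def_C1_Euler} for arbitrary $\bar\epsilon>0$.
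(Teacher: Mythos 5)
Your proposal is correct and follows essentially the same route as the paper: the same decomposition of the bilinear difference exploiting the antisymmetry \eqref{B} so that $\langle B(v_N(t_{j-1}),\bar e_j),\bar e_j\rangle$ vanishes, the same Gagliardo--Nirenberg/Young treatment of the cross term $\langle B(\bar e_{j-1},u(t_j)),\bar e_j\rangle$ spread over the two time levels $j-1$ and $j$ (harmless after summation since $\bar e_0=0$ and $\Omega^M_{j-1}\subset\Omega^M_{j-2}$), and the same localization, martingale estimates and discrete Gronwall step as in Proposition \ref{Loc_cv_Euler}. The only (immaterial) difference is the exact grouping of the time-regularity remainders, which in the paper are $B(u(s)-u(t_{j-1}),u(t_j))$ and $B(u(s),u(s)-u(t_j))$ rather than your variants; both are controlled identically by \eqref{HolderL4} and \eqref{HolderV}.
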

 \begin{proof} Many parts of the argument are similar to the corresponding ones in the proof of Proposition \ref{Loc_cv_Euler}; we only focus
 on the differences. 
 
 We first consider the duality between the difference of bilinear terms and $\bar{e}_j$, that is upper estimate 
 $\int_{t_{j-1}}^{t_j} \langle B(u(s),u(s)) - B(v_N(t_{j-1}), v_N(t_j))\, , \, \bar{e}_j \rangle ds$. For every $s\in [t_{j-1}, t_j]$, using the bilinearity
 and antisymetry of $B$ we deduce
 \[ \langle B(u(s),u(s)) - B(v_N(t_{j-1}), v_N(t_j))\, , \, \bar{e}_j \rangle = \sum_{i=1}^3 \bar{T}_i(s), \]
 where
 \begin{align*}
 \bar{T}_1(s):=& \big\langle B\big(\bar{e}_{j-1} , v_N(t_j) \big) \, , \, \bar{e}_j \big\rangle 
 = \big\langle B\big(\bar{e}_{j-1} , u(t_j) \big) \, , \, \bar{e}_j \big\rangle, \\
 \bar{T}_2(s):=&  \big\langle B\big(u(s)-u(t_{j-1})  , u(t_j) \big) \, , \, \bar{e}_j \big\rangle, \\
 \bar{T}_3(s):= & \big\langle B\big(u(s), u(s)-  u(t_j) \big) \, , \, \bar{e}_j \big\rangle = -\big\langle B(u(s), \bar{e}_j)\, , \, u(s)-u(t_j)\big\rangle.
 \end{align*}
 Using the H\"older inequality, \eqref{interpol} and  the Young inequality,  we deduce that for every $\delta_1>0$, 
 \begin{align}		\label{maj_Tbar1}
 \int_{t_{j-1}}^{t_j} |\bar{T}_1(s)| ds \leq & \bar{C} \frac{T}{N} |\bar{e}_{j-1}|_{\LL^2}^{\frac{1}{2}}  |\bar{e}_{j}|_{\LL^2}^{\frac{1}{2}} 
 |\nabla \bar{e}_{j-1}|_{\LL^2}^{\frac{1}{2}}   |\nabla \bar{e}_{j}|_{\LL^2}^{\frac{1}{2}} |\nabla u(t_j)|_{\LL^2}. \nonumber \\
\leq  &\frac{\delta_1}{2} \nu \frac{T}{N}  |\nabla \bar{e}_{j-1}|_{\LL^2}^2 + \frac{\delta_1}{2} \nu \frac{T}{N}  |\nabla \bar{e}_{j}|_{\LL^2}^2\nonumber \\
&\;+ \frac{\delta_1\bar{C}^2}{8\delta_1 \nu}   \frac{T}{N}  | \bar{e}_{j-1}|_{\LL^2}^2 |\nabla u(t_j)|_{\LL^2}^2 +
+ \frac{\delta_1\bar{C}^2}{8 \delta_1 \nu}   \frac{T}{N}  | \bar{e}_{j}|_{\LL^2}^2 |\nabla u(t_j)|_{\LL^2}^2. 
 \end{align}
 The upper estimates of $\int_{t_{j-1}}^{t_j} \!  \bar{T}_i(s) ds$, $i=2,3$ are similar to the corresponding ones in the first step of the proof of 
 Theorem \ref{Loc_cv_Euler}. This yields the following analog of \eqref{(e-e,e)} with the same upper estimates \eqref{bound_ET1} --
 \eqref{bound_ET3} of the terms $\tilde{T}_j(i)$, 
 $i=1,2,3$
 \begin{align}				\label{(e-e,e)semi}
 (\bar{e}_j-\bar{e}_{j-1}\,& , \, \bar{e}_j)  + \nu |\nabla \bar{e}_j|_{\LL^2}^2 \leq \nu (\delta_0 + \frac{1}{2} \delta_1 + \delta_2+\delta_3) \frac{T}{N}
 |\nabla \bar{e}_j|_{\LL^2}^2 + \frac{1}{2} \delta_1 \nu \frac{T}{N} |\nabla \bar{e}_{j-1}|_{\LL^2}^2  \nonumber \\
 &+ \Big( \gamma_2 + \frac{\bar{C}^2}{8\delta_1 \nu} |\nabla u(t_j)|_{\LL^2}^2 \Big) \frac{T}{N} |\bar{e}_j|_{\LL^2}^2 +
 \frac{\bar{C}^2}{8\delta_1 \nu} |\nabla u(t_j)|_{\LL^2}^2 | \bar{e}_{j-1}|_{\LL^2}^2 
  \nonumber \\
  &+ \sum_{i=1}^3 \tilde{T}_j(i) +\Big( \bar{e}_j \, , \, \int_{t_{j-1}}^{t_j} \big[ G(u(s))-G(v_N(t_{j-1})) \big] \, dW(s) \big).  
 \end{align}
 Once adding these estimates localized on the set $\Omega^M_{t_{j-1}}$, we deduce an upper estimate similar to \eqref{maj_max}
 where $e_j$ is replaced by $\bar{e}_j$. Following the same steps as in the proof of Proposition \ref{Loc_cv_Euler}, we conclude the proof.
 \end{proof}

The arguments in section \ref{strong_Euler} prove that the statements of Theorems \ref{th_Euler_lingrow} and 
\ref{th_Euler_exp} remain valid if we replace the solution $u_N(t_k)$
of the fully implicit Euler scheme by the solution $v_N(t_k)$ of the semi implicit one.

\subsection{Time dependent coefficients}  \label{sec_time_dependent}
 For the sake of simplicity, we have supposed that the diffusion coefficient $G$ does not depend on time. An easy modification
of the proofs of this section shows that the statements of Theorems \ref{th_Euler_lingrow}, \ref{th_Euler_exp} for the fully or semi implicit
Euler schemes remain true if we suppose that $G:[0,T]\times V\to {\mathcal L}_2(K,H)$
(resp. $G:[0,T]\times {\rm Dom(A)}\to {\mathcal L}_2(K,V)$) satisfies  the following global linear growth and Lipschitz conditions 
similar to those imposed in assumptions  {\bf (G1)} and {\bf (G2)} 
\begin{align*}
&\|G(t,u)\|_{{\mathcal L}_2(K,H)}^2 \leq K_0 + K_1 |u|_{\LL^2}^2,  \quad 
\|G(t,u)\|_{{\mathcal L}_2(K,V)}^2 \leq K_0 + K_1 \|u\|_{V}^2 ,\\ 
&\|G(t,u)-G(t,v)\|_{{\mathcal L}_2(K,H)^2}^2 \leq L_1 |u-v|_{\LL^2}^2, \\
&\|G(t,u)-G(t,v)\|_{{\mathcal L}_2(K,V)^2}^2 \leq L_1 \|u-v\|_V^2 , 
\end{align*} 
for $u,v\in V$ (resp. $u,v\in \mbox{\rm Dom }(A)$).  
Furthermore,  the diffusion coefficient  $G$ should also satisfy the following  time regularity condition   \\
{\bf (G3)} There exits a constant $C>0$ such that for any $u,v\in V$ and $s,t\in [0,T]$:
\[ \|G(t,u)-G(s,u)\|_{{\mathcal L}_2(K,H)}^2 \leq C |t-s|^{\frac{1}{2}} \,  (1+\|u\|_{{\mathbb L}^2}^2).   \]
In that case, the fully implicit scheme $u_N$ 
 (resp. semi implicit scheme $v_N$) is defined replacing $G(u_N(t_{k-1}))$ by $G(t_{k-1}, u_N(t_{k-1}))$
(resp. by   $G(t_{k-1}, v_N(t_{k-1}))$)  on  the right hand side of  \eqref{full-imp1}.

\section{Appendix} \label{Appendix}
In this section we prove two technical lemmas used to obtain the strong convergence results. 

\subsection{Proof of Lemma \ref{supt_ZN}} \label{A1}

 First note that using \eqref{moments_splitting} and \eqref{zN-uN_V}, we deduce that  if $\EE(\|u_0\|_V^{2p})<\infty$ then 
\[  \sup_{N\geq 1} \EE\int_0^T \|z^N(s)\|_V^{2p} dt \leq C(p).\]
Thus only moments of $\sup_{t\in [0,T]}|z^N(t)|_{\LL^2}^{2p}$ have to be dealt with. 

The process $z^N$  defined by \eqref{def_zN} is not regular enough to apply directly It\^o's formula to $|z^N(t)|_{\LL^2}^2$. 
Hence, as in the proof of Proposition \ref{prop5.1-mod}, 
we need to apply  It\^o's formula on the smooth Galerkin 
approximations of the processes $u^N$, $y^N$ and $z^N$, and then pass to the limit.  This yields for every $t\in [0,T]$
\begin{align*} 		
|z^N(t)|_{\LL^2}^2 = |&u_0|_{\LL^2}^2 -2 \int_0^t  \langle F(u^N(s))\, , \, z^N(s)\rangle ds + \int_0^t \|G( y^N(s)\|_{{\mathcal L}_2(K,H)}^2 ds
\nonumber \\
& + 2\int_0^t  \big(  z^N(s) \, , \, G(y^N(s)) dW(s)  \big).  
\end{align*}
Using once more the It\^o formula, we deduce 
\begin{equation} 		\label{Ito_zN_p}
|z^N(t)|_{\LL^2}^{2p} = |u_0|_{\LL^2}^{2p} + I(t) + \sum_{i=1}^3 J_i(t),
\end{equation}
where
\begin{align*}
I(t)=& 2p \int_0^t  \big(  z^N(s) \, , \, G(y^N(s)) dW(s) \big) |z^N(s)|_{\LL^2}^{2(p-1)}, \\
J_1(t)=& -2p \nu  \int_0^t |z^N(s)|_{\LL^2}^{2(p-1)} \big( \nabla u^N(s) \, , \nabla z^N(s)\big) ds  \\
J_2(t)=& - 2p \int_0^t   |z^N(s)|_{\LL^2}^{2(p-1)} \big\langle B(u^N(s), u^N(s)) \, , \, z^N(s) \big\rangle ds \\
J_3(t)=& + p \int_0^t |z^N(s)|_{\LL^2}^{2(p-1)} \|G( y^N(s)\|_{{\mathcal L}_2(K,H)}^2 ds\\
&+ 2p(p-1) \int_0^t \|G^*( y^N(s)) z^N(s) \|_K^2  |z^N(s)|_{\LL^2}^{2(p-2)} ds.
\end{align*}
The H\"older and Young inequalities  imply
\begin{align*} 		
|J_1(t)| \leq & 2 (p-1)  \nu \int_0^t |z^N(s)|_{\LL^2}^{2p} ds + \nu  \int_0^t \big[ |\nabla z^N(s)|_{\LL^2}^{2p} + |\nabla u^N(s)|_{\LL^2}^{2p}\big] ds 
\end{align*}
Using again the H\"older and Young inequalities with exponents $\frac{2p+1}{2p-1}$ and $\frac{2p+1}{2}$, we deduce
\begin{align*} 
|J_2(t)| \leq & 2p \int_0^t |z^N(s)|_{\LL^2}^{2(p-1)} |\nabla z^N(s)|_{\LL^2} \; \|u^N(s)\|_X^2 ds \\
\leq &  
 \frac{(2p-1) 2p}{2p+1}   \int_0^t \|z^N(s)\|_V^{2p+1} ds
 + \frac{4p}{2p+1} \Big( \frac{\bar{C}}{2} \Big)^{\frac{2p+1}{2}} \int_0^t \|u^N(s)\|_V^{2p+1} ds,
\end{align*}
where $\bar{C}$  is the constant defined in \eqref{interpol}. 
Finally, using the growth condition  {\bf (G1)}, 
we deduce
\begin{align*}
|J_3(t)| \leq & (2p^2-p) \int_0^t |z^N(s)|_{\LL^2}^{2(p-1)} \big[ K_0 + K_1 |y^N(s)|_{\LL^2}^2 \big] ds \\
\leq & 
 (2p-1)(p-1)    \int_0^t |z^N(s)|_{\LL^2}^{2p} ds +  C(p)  K_1^p   \int_0^t \|y^N(s)\|_{\LL^2}^{2p} ds + C(p) K_0 T .
\end{align*}
where  $C(p)$ is a constant depending on $p$.   The inequalities 
\eqref{moments_splitting} and \eqref{zN-uN_V},  and the above estimates of $J_i(t)$ for $i=1,2,3,$ imply  
the existence of a positive constant $C(p)$ depending on $p$
such that  for 
every integer. $N\geq 1$, 
\begin{equation}			\label{maj_J_zN}
\sum_{i=1}^3 \EE\Big( \sup_{t\in [0,T]}  |J_i(t)| \Big)  \leq C(p).
\end{equation}  
Furthermore, the Burkholder-Davies-Gundy inequality, the growth condition in  {\bf (G1)} 
and the Young inequality 
 imply 
\begin{align}    \label{maj_I_zN}
\EE\Big( \sup_{t\in [0,T]} |I(s)|\Big) \leq & 6p  \EE\Big( \Big\{  \int_0^T |z^N(s)|_{\LL^2}^{4p-2} 
 \|G(y^N(s))\|_{{\mathcal L}_2(K,H)}^2 ds  \Big\}^{\frac{1}{2}}
 \Big)  \nonumber  \\
& \leq  \frac{1}{2} \EE\Big( \sup_{t\in [0,T]} |z^N(t)|_{\LL^2}^{2p}\Big) + C(p)  
\EE  \int_0^T \big[ K_0^p + K_1^p |y^N(s)|_{\LL^2}^{2p}\big]  ds  \nonumber \\
& \leq \frac{1}{2} \EE\Big( \sup_{t\in [0,T]} |z^N(t)|_{\LL^2}^{2p}\Big) + C(p) ,
\end{align}
where the last inequality is deduced from \eqref{moments_splitting}.

The upper estimates \eqref{Ito_zN_p}--\eqref{maj_I_zN}, \eqref{bound_u}, \eqref{moments_splitting} and \eqref{zN-uN_V}  conclude the proof. 
\hfill $\square$

\subsection{Proof of Lemma \ref{exp_mom}} \label{A2} 
We prove 
 the existence of exponential moments for the square of the $V$ norm of the solution $u$
of \eqref{2D-NS}. \\
{\it Proof of Lemma \ref{exp_mom}.} 
Let $\alpha >0$;  we apply the  It\^o formula to the square of the $V$ norm of the process $u$ solution of \eqref{2D-NS}.
As explained in the proofs of Proposition \ref{prop5.1-mod} and of the previous Lemma, the It\^o formula 
has to be performed on smooth processes, for example a Galerkin or a  Yosida approximation of $u$,  
and then pass to the limit or use computations similar to those in \cite{ChuMil}, step 4 of the Appendix on page 416.   This yields
\begin{align}				\label{Ito_V_u}
\alpha \|u(t)|_V^2 &+ \alpha \nu \int_0^t |||u(s)|||^2 ds = \alpha \|u_0\|_V^2 + \alpha \int_0^t \|G(s,u(s)) \|_{{\mathcal L}_2(K,V)}^2 ds \nonumber \\
& + 2\alpha \int_0^t \big( u(s) \, , \, G(s,u(s)) dW(s)\big)_V - \alpha \nu \int_0^t |||u(s)|||^2 ds,
\end{align} 
where for $u,v\in V$ we set $(u,v)_V=(u,v) + (\nabla u, \nabla v)$ and recall that $|||u|||^2 := |\nabla u|_{\LL^2}^2 + | Au|_{\LL^2}^2$.  

Let $M(t):=2\alpha \int_0^t \big( u(s) \, , \, G(s,u(s)) dW(s)\big)_V$; then $M$ is a martingale with quadratic variation 
\begin{align*}
 \langle M\rangle_t &\leq 4\alpha^2 \! \int_0^t \! \|u(s)\|_V^2 \|G(s,u(s))\|_{{\mathcal L}_2(K,V)}^2 ds \leq 4 \alpha^2 K_0 \! \int_0^t  \!\|u(s)\|_V^2  ds,\\
 &\leq 4 \alpha^2 K_0 \tilde{C}\! \int_0^t  \! |||u(s)|||^2  ds,
\end{align*}
where the last inequality follows from \eqref{growthG_V} and the definition of $\tilde{C}$ in \eqref{Poincare}. 

Let  
 $\alpha_0:= \frac{\nu}{4 K_0 \tilde{C}}$;   then  
we deduce 
\[ M_t - \alpha \nu \int_0^t |||u(s)|||^2 ds \leq M_t - \frac{\alpha_0}{\alpha} \langle M\rangle_t. \]
Therefore, using the previous inequality and classical exponential martingale arguments, we deduce 
\begin{align*}
\PP \Big[ \sup_{0\leq t\leq T} & \Big(  M_t - \alpha \nu \int_0^t |||u(s)|||^2 ds   \Big) \geq K \Big] \leq 
\PP \Big[ \sup_{0\leq t\leq T}\big(  M_t - \frac{\alpha_0}{\alpha} \langle M \rangle_t \big) \geq K \Big] 
\\
 \leq &\PP\Big[  \sup_{0\leq t\leq T} \exp\Big( \frac{2\alpha_0}{\alpha} M_t 
 - \frac{1}{2} \big\langle \frac{2\alpha_0 }{\alpha} M  \big\rangle_t \Big)   \geq \exp\Big(\frac{2\alpha_0}{\alpha}K\Big) \Big] \\
 \leq & \exp\Big( - \frac{2\alpha_0}{\alpha}K\Big)\; \EE\Big[ \exp\Big( \frac{2\alpha_0}{\alpha} M_T 
 - \frac{1}{2} \langle \frac{\beta}{\alpha} M\rangle_T \Big) \Big]  \leq  \exp\Big( - \frac{2\alpha_0 }{\alpha}K\Big)
\end{align*}
for any $K>0$. Set 
\[ X=\exp\Big( \alpha\Big\{ \sup_{t\in [0,T]} 2\int_0^t \big( u(s)\, , \, G(s,u(s)) \big)_V
- \nu \int_0^t |||u(s)|||^2 ds\Big\} \Big); \]
then for $\alpha \in (0, \alpha_0]$, we deduce that $\PP(X\geq e^K) \leq \exp(-K \frac{2\alpha_0}{\alpha}) \leq e^{-2K}=(e^{-K})^2$ 
for any $K > 0$.\\
 Using this inequality
for any $C=e^K>1$ with $K>0$, we deduce 
$\EE(X) \leq 2+\int_0^\infty \PP(X\geq C) dC \leq 3$. Since   \eqref{Ito_V_u} implies
\begin{align*}
\alpha \Big( \sup_{t\in [0,T]}& \Big[ \|u(t)\|_V^2 + \nu \int_0^t |||u(s)|||^2 ds \Big] \Big) \leq \alpha \|u_0\|_V^2 + \alpha K_0 T \\
& + \sup_{t\in [0,T]}  \Big( \alpha\Big\{  2\int_0^t \big( u(s)\, , \, G(s,u(s)) \big)_V
- \nu \int_0^t |||u(s)|||^2 ds\Big\} \Big), 
\end{align*}
we conclude the proof of \eqref{moments_exp}. \hfill $\square$
\bigskip

\noindent {\bf Acknowledgements.} This   research was  started
in the fall 2017 while Annie Millet visited the University of Wyoming. 
She would like to thank this University  for the hospitality and the very pleasant  working conditions.
 Hakima Bessaih is partially supported by NSF grant DMS-1418838.\\
 Finally we would like to thank the anonymous referee for the careful reading and valuable remarks which helped improving this paper.

\end{document}